\documentclass[a4paper,twoside,11pt]{article}

\usepackage{a4wide} %%Smaller margins = more text per page.
\usepackage[utf8]{inputenc}
\usepackage[T1]{fontenc}
\usepackage{lmodern}

\usepackage{graphicx} 
\usepackage{verbatim} 
\usepackage{amsmath,amssymb,amsthm}
\usepackage{color}
\usepackage{enumerate}
\usepackage{array}
\usepackage[noadjust]{cite}
\usepackage{multirow}

\usepackage{caption}
\usepackage{subcaption}

%\usepackage{tikz}
%\usetikzlibrary{calc}
%\usetikzlibrary{arrows}
%\usetikzlibrary{decorations.pathreplacing}  %for underbrace in tikz
%\usepgflibrary{shapes.geometric}

%\usepackage{titling}
%\setlength{\droptitle}{-2cm}

\usepackage{xcolor}
\usepackage{hyperref}
\definecolor{darkgreen}{rgb}{0,0.4,0}
\definecolor{BrickRed}{rgb}{0.65,0.08,0}
\hypersetup{colorlinks=true,linkcolor=blue,citecolor=red,filecolor=BrickRed,urlcolor=darkgreen}
\linespread{1.05}

\newcommand{\ct}{c}
\newcommand{\rt}{r}

\newcommand{\LandauO}{\mathcal{O}}
\newcommand{\Landauo}{o}
\newcommand{\PR}{\mathbb{P}}

\newcommand{\Cc}{\mathcal{C}}

\newcommand{\Lc}{\mathcal{L}}

\newcommand{\N}{\mathbb{N}}
\newcommand{\Q}{\mathbb{Q}}

\newcommand{\Ai}{\text{\normalfont Ai}}
\newtheorem{theo}{Theorem}[section]
\newtheorem{lemma}[theo]{Lemma}
\newtheorem{prop}[theo]{Proposition}
\newtheorem{coro}[theo]{Corollary}
\newtheorem{definition}[theo]{Definition}

\theoremstyle{remark}
\newtheorem{remark}[theo]{Remark}

\newcommand{\OEIS}[1]{\href{http://oeis.org/#1}{OEIS~#1}}

\newtheoremstyle{conjecture}{}{}{\it}{}{\color{purple}\bfseries}{}{ }{}
\theoremstyle{conjecture}

% For definition of terms
\newcommand{\tdef}[1]{%\textcolor{blue}
{\emph{#1}}}
% Bijection operators
\newcommand{\todyck}{\mathbf{Dyck}}
\newcommand{\dycklabel}{\mathbf{\mathcal{L}}}
\newcommand{\cuniquedecopath}{C-decorated}
\newcommand{\csimpledecopath}{H-decorated}
% Combinatorial classes

\newcommand{\ddyck}{\mathcal{D}}

\begin{document}

\author{
Andrew Elvey Price%
\thanks{Andrew Elvey Price was supported by the European Research Council (ERC) in the European Union’s Horizon 2020 research and innovation programme, under the Grant Agreement No.~759702.}
	\and
Wenjie Fang%
\thanks{Wenjie Fang was supported by the Austrian Science Fund (FWF) grant P 27290 and I 2309-N35 as a postdoctoral fellow at TU Graz, during which part of the work was done.}
	\and 
Michael Wallner%
\thanks{Michael Wallner was supported by the Erwin Schr{\"o}dinger Fellowship of the Austrian Science Fund (FWF):~J~4162-N35.}
}

\newcommand{\addorcid}[1]{\protect\includegraphics[height=3mm]{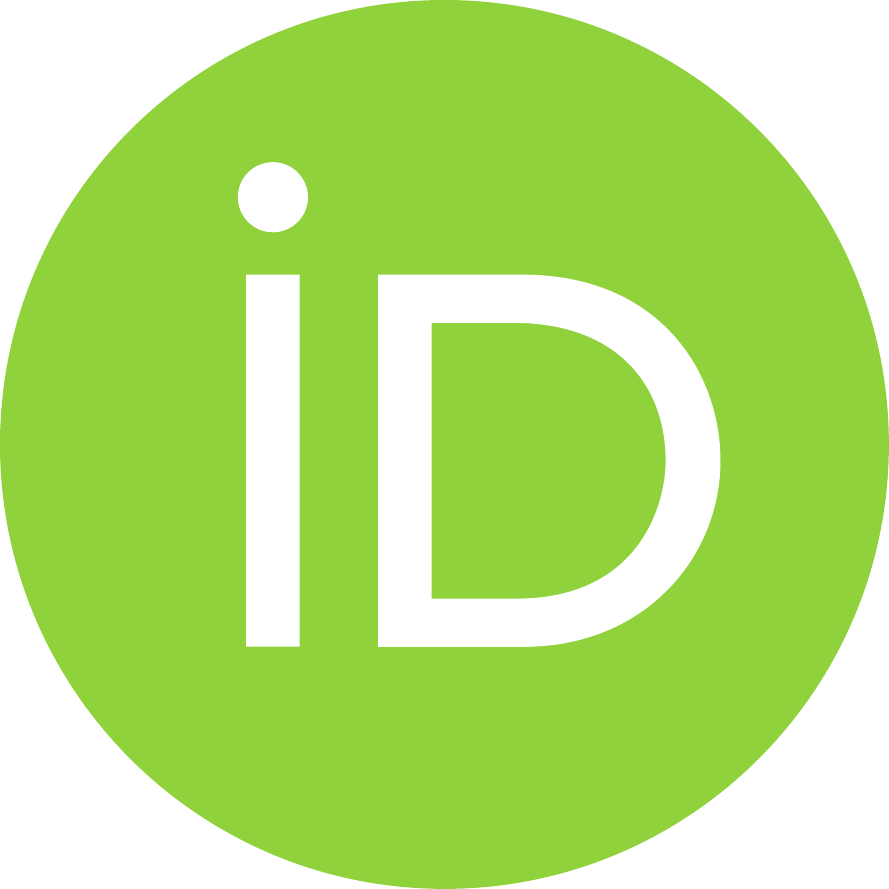} \url{https://orcid.org/#1}}
\newcommand{\Addresses}{{% additional braces for segregating \footnotesize
  \bigskip
  \footnotesize

  A.~Elvey Price, \textsc{Laboratoire Bordelais de Recherche en Informatique, UMR 5800, Universit\'e de Bordeaux, 351 Cours de la Libération, 33405 Talence Cedex, France}\par\nopagebreak
  \textit{E-mail address:} \texttt{andrew.elvey@univ-tours.fr}\par\nopagebreak
%  \textit{Website:} \url{} \par\nopagebreak
\addorcid{0000-0003-3240-6390}

  \medskip

  W.~Fang, \textsc{Laboratoire d'Informatique Gaspard-Monge, UMR 8049, Université Gustave-Eiffel, CNRS, ESIEE Paris, 5 Boulevard Descartes, 77454 Marne-la-Vallée, France}\par\nopagebreak
  \textit{E-mail address:} \texttt{wenjie.fang@u-pem.fr}\par\nopagebreak
  \textit{Website:} \url{http://igm.univ-mlv.fr/~wfang/} \par\nopagebreak
  \addorcid{0000-0001-9148-2807}

  \medskip

  M.~Wallner, \textsc{Laboratoire Bordelais de Recherche en Informatique, UMR 5800, Universit\'e de Bordeaux, 351 Cours de la Libération, 33405 Talence Cedex, France;
	 Institute of Discrete Mathematics and Geometry, TU Wien, Wiedner Hauptra{\ss}e 8--10, 1040 Wien, Austria}\par\nopagebreak
  \textit{E-mail address:} \texttt{michael.wallner@tuwien.ac.at}\par\nopagebreak
  \textit{Website:} \url{https://dmg.tuwien.ac.at/mwallner/} \par\nopagebreak
  \addorcid{0000-0001-8581-449X}

}}

%\date{}

\title{Compacted binary trees admit a stretched exponential}

\maketitle

\let\thefootnote\relax\footnotetext{{\textcopyright}~2020. This manuscript version is made available under the CC-BY-NC-ND 4.0 license \url{http://creativecommons.org/licenses/by-nc-nd/4.0/}}

%https://tex.stackexchange.com/questions/151583/how-to-adjust-the-width-of-abstract
\renewenvironment{abstract}
{\small
\begin{center}
\bfseries \abstractname\vspace{-.5em}\vspace{0pt}
\end{center}
\list{}{%
	\setlength{\leftmargin}{7mm}% <---------- CHANGE HERE
	\setlength{\rightmargin}{\leftmargin}%
}%
\item\relax}
{\endlist}

\begin{abstract}
	A compacted binary tree is a directed acyclic graph encoding a binary tree in which common subtrees are factored and shared, such that they are represented only once.
	We show that the number of compacted binary trees of size $n$ grows asymptotically like
	\begin{align*}
		\Theta\left( n! \, 4^n e^{3a_1n^{1/3}} n^{3/4} \right),
	\end{align*}
	where $a_1\approx-2.338$ is the largest root of the Airy function. Our method involves a new two parameter recurrence which yields an algorithm of quadratic arithmetic complexity for computing the number of compacted trees up to a given size. We use empirical methods to estimate the values of all terms defined by the recurrence, then we prove by induction that these estimates are sufficiently accurate for large $n$ to determine the asymptotic form.
	Our results also lead to new bounds on the number of minimal finite automata recognizing a finite language on a binary alphabet. As a consequence, these also exhibit a stretched exponential.

\medskip
\noindent\textbf{Keywords: } Airy function, asymptotics, bijection, compacted trees, directed acyclic graphs, Dyck paths, finite languages, minimal automata, stretched exponential.
\end{abstract}

\section{Introduction}
\label{sec:intro}

Compacted binary trees are a special class of directed acyclic graphs that appear as a model for data structures in the compression of XML documents~\cite{bousquet2015xml}. 
Given a rooted binary tree of size $n$, its compacted form can be computed in expected and worst-case time $\LandauO(n)$ with expected compacted size $\Theta(n/ \sqrt{\log n})$~\cite{flss90}. 
Recently, Genitrini, Gittenberger, Kauers, and Wallner  solved the reversed question on the asymptotic number of compacted trees under certain height restrictions~\cite{GenitriniGittenbergerKauersWallner2016}; however the asymptotic number in the unrestricted case remained elusive. They also solved this problem for a simpler class of trees known as relaxed trees under the same height restrictions. 
In this paper we show that the counting sequences $(c_n)_{n \in \N}$ of (unrestricted) compacted binary trees and $(r_n)_{n \in \N}$ of (unrestricted) relaxed binary trees both admit a stretched exponential: 

\begin{theo}
	\label{theo:mainasy}
	The number of compacted and relaxed binary trees satisfy for $n \to \infty$
	\begin{align*}
		c_n &= \Theta\left( n! \, 4^n e^{3a_1n^{1/3}} n^{3/4} \right) &\text{ and } &&		
		r_n &= \Theta\left( n! \, 4^n e^{3a_1n^{1/3}} n \right), 
	\end{align*}
	 where $a_{1}\approx-2.338$ is the largest root of the Airy function $\Ai(x)$ defined as the unique function satisfying $\Ai''(x)=x\Ai(x)$ and $\lim_{n\to \infty} \Ai(x) = 0$.
\end{theo}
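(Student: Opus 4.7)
The plan is to encode compacted and relaxed binary trees as decorated Dyck paths, so that the $n!$ and $4^n$ factors in the asymptotic fall out of a bijection and the entire difficulty is concentrated in a two-parameter recurrence whose solution exhibits the Airy-type stretched exponential. First I would establish a bijection sending a compacted (respectively relaxed) tree of size $n$ to a Dyck path of semilength $n$ carrying an admissible labeling of its steps: the labels account for the internal pointers of the DAG and contribute the $n!$, while the underlying Dyck-path entropy together with the choices for each step contribute the $4^n$. The compaction constraint translates into a local admissibility rule on labels depending on the current height of the path.

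Next I would refine the counting by height into an auxiliary array $T_n^{(k)}$ (and a companion array for the relaxed case), where $k$ tracks the number of available "open" nodes. The admissibility rule above yields a recurrence of the schematic shape
\begin{equation*}
T_n^{(k)} = (k+\alpha)\, T_{n-1}^{(k-1)} + T_{n-1}^{(k+1)},
\end{equation*}
with $c_n$ read off from a specific boundary entry; this is precisely the two-parameter recurrence announced in the abstract and it runs in $\LandauO(n^2)$ arithmetic operations. From this recurrence and extensive numerical data one sees that $\log T_n^{(k)} / n$ and the subleading correction in $n^{1/3}$ stabilise, suggesting an ansatz of the form $T_n^{(k)} \sim n!\,4^n\,n^{\gamma}\, \Ai\bigl((k-\kappa n^{1/3})\, n^{-1/3}\bigr)$ with constants to be determined empirically and then validated.

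With the ansatz in hand I would prove the theorem by a double induction on $n$ and $k$ establishing matching upper and lower bounds $c_1 \, U_n^{(k)} \leq T_n^{(k)} \leq c_2\, U_n^{(k)}$ for explicit envelopes $U_n^{(k)}$ of Airy type. The induction step consists of inserting $U_n^{(k)}$ into the recurrence and verifying that the combination of $U_{n-1}^{(k-1)}$ and $U_{n-1}^{(k+1)}$ reproduces $U_n^{(k)}$ up to admissible error; the key algebraic miracle here is that the Airy differential equation $\Ai''(x)=x\Ai(x)$ matches exactly the second-difference operator arising from the recurrence once one zooms in at the scaling $k = \kappa n^{1/3}$. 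Finally, specialising to the boundary entry corresponding to $c_n$ yields the exponent $3a_1 n^{1/3}$ and the polynomial factor $n^{3/4}$; an analogous but slightly simpler computation for the relaxed case yields the polynomial factor $n$.

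The main obstacle, and what justifies the mixture of empirical and rigorous steps advertised in the abstract, is Step~3. The stretched exponential $e^{3a_1 n^{1/3}}$ is extremely sensitive: an error of order $n^{1/3}$ in the exponent would swamp the polynomial factor entirely, so the inductive envelope $U_n^{(k)}$ must be calibrated extremely tightly. Concretely, I expect to first prove the exponential scale $\log c_n = n\log(4n/e) + 3a_1 n^{1/3} + \Landauo(n^{1/3})$ by a soft induction exploiting concavity of the Airy profile, and only then refine to polynomial precision by using sharper local expansions of $\Ai$ near $a_1$, with empirically computed constants inserted to close the induction on both sides. Once the compacted case is settled in this way, the relaxed case follows by a parallel but technically lighter argument with a different boundary condition.
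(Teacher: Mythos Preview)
Your high-level outline matches the paper's strategy: bijection to decorated Dyck paths, a two-parameter recurrence, an Airy-function ansatz motivated by heuristics, and an induction with explicit upper and lower envelopes whose verification reduces to the Airy ODE. However, two concrete technical obstacles are absent from your plan, and without them the induction as you describe it would not close.

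First, for the \emph{upper} bound, the inequality ``envelope satisfies the recurrence with the correct sign'' cannot be made to hold uniformly in $k$. In the paper's variables, the inequality $\hat{X}_{n,m}\hat{s}_n \geq \frac{n-m+2}{n+m}\hat{X}_{n-1,m-1}+\hat{X}_{n-1,m+1}$ is only provable for $m<n^{1-\varepsilon}$; for larger $m$ the Newton-polygon analysis breaks down. The paper repairs this by a separate probabilistic argument: it proves a monotonicity lemma for the weighted number of path suffixes (Lemma~4.3), uses the already-proven lower bound on $d_{2x,0}$, and then shows that the total mass of paths ever reaching height above $x^{3/4}$ is negligible (Lemma~4.4). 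Only after truncating the process at that height does the inductive upper bound go through. Your proposal to ``first prove the exponential scale by a soft induction\ldots and only then refine'' does not address this; the problem is not precision of the envelope but the fact that no Airy-type envelope controls the extreme-height region at all.

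Second, for compacted trees the recurrence you write down is not of the form $T_n^{(k)}=(k+\alpha)T_{n-1}^{(k-1)}+T_{n-1}^{(k+1)}$: the compaction constraint introduces a genuinely \emph{negative} term, $c_{n,m}=c_{n,m-1}+(m+1)c_{n-1,m}-(m-1)c_{n-2,m-1}$. With a negative coefficient the monotone induction ``envelope inequality $\Rightarrow$ bound'' fails outright. The paper handles this by sandwiching $e_{n,m}$ between two auxiliary sequences $L_e$ and $U_e$ that are built from three- and four-step expansions of the recurrence and have only nonnegative coefficients (Lemma~5.1), and then runs the envelope argument on those. Your sketch treats the compacted case as ``parallel but technically lighter,'' which is the opposite of the truth; this sandwiching step is an additional idea you would need to supply.
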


We believe that there are constants $\gamma_{c}$ and $\gamma_{r}$ such that 
\[c_{n}\sim \gamma_{c}n!4^{n}e^{3a_1n^{1/3}}n^{3/4}~~~~\text{and}~~~~r_{n}\sim \gamma_{r}n!4^{n}e^{3a_1n^{1/3}}n,\]
however, we have been unable to find the exact values of these constants or even prove their existence. Nevertheless, our empirical analysis yields what we believe to be very accurate estimates for $\gamma_{c}$ and $\gamma_{r}$, namely $\gamma_{c}\approx 173.12670485$ and $\gamma_{r}\approx 166.95208957$.

The presence of a stretched exponential term in a sequence counting combinatorial objects is not common, although there are quite a few precedents. One simple example is that of {\em pushed Dyck paths}, where Dyck paths of maximum height $h$ are given a weight $y^{-h}$ for some $y>1$. In this case McKay and Beaton determined the weighted number $d_{n}$ of paths of length $2n$ up to {\em and including} the constant term to be asymptotically given by
\[d_{n}\sim A y(y-1)(\log y)^{1/3}4^n\exp\left(-C(\log y)^{2/3}n^{1/3}\right)n^{-5/6},\]
where $A=2^{5/3}\pi^{5/6}/\sqrt{3}$ and $C=3(\pi/2)^{2/3}$; see 
\cite{guttmann2015analysis}. For the analogous problem of counting pushed self avoiding walks, Beaton et al.~\cite{BeatonEtal2015Compressed} gave a (non-rigorous) probabilistic argument for the presence of a stretched exponential of the form $e^{-cn^{3/7}}$ for some $c>0$. In each of these cases, a stretched exponential appears as part of a compromise between the large height regime in which most paths occur and the small height regime in which the weight is maximized. We will see that a similar compromise occurs in this paper. 
Another situation in which stretched exponentials have appeared is in cogrowth sequences in groups~\cite{EP2017numerical}, that is, paths on Cayley graphs which start and end at the same point. In particular, Revelle~\cite{revelle2003heat}  showed that in the lamplighter group the number $c_{n}$ of these paths of length $2n$ behaves like
\[c_n \sim C \, 9^n \kappa^{n^{1/3}} n^{1/6}.\]
In the group $\mathbb{Z}\wr\mathbb{Z}$, Pittet and Saloff-Coste showed that the asymptotics of the cogrowth series contains the slightly more complicated term $\kappa^{\sqrt{n\log n}}$ \cite{pittet2002random}. Another example comes from the study of pattern avoiding permutations, where Conway, Guttmann, and Zinn-Justin~\cite{ConwayEtal20151324,ConwayEtal20181324} have given compelling numerical evidence that the number $p_{n}$ of 1324-avoiding permutations of length $n$ behaves like
\[p_{n}\sim B \mu^n \mu_1^{\sqrt{n}} n^g,\]
with $\mu\approx11.600$, $\mu_1 \approx 0.0400$, $g \approx -1.1$. 

As seen by these examples, it is generally quite difficult to prove that a sequence has a stretched exponential in its asymptotics. Part of the difficulty is that a sequence which has a stretched exponential cannot be ``very nice''. In particular, the generating function cannot be algebraic, and can only be $D$-finite if it has an irregular singularity~\cite{flaj09}.

Some explicit examples of $D$-finite generating series with a stretched exponential are known; see e.g.~\cite{Wright1949Coefficients,Wright1933Coefficients,Wright1932Coefficients}. 
In these cases Wright uses a saddle-point method to prove the presence of the stretched exponential. To apply this method, one needs to meticulously check various analytic conditions on the generating function, or to bound related integrals in a delicate way. These tasks can be highly non-trivial and require a precise knowledge of the analytic properties of the generating function. For more detail on how to use the saddle-point method to prove stretched exponentials, and further examples, see \cite[Chapter VIII]{flaj09}. 

In lieu of detailed information on the generating function, we find and analyze  the following recurrence relation
\[r_{n,m}=r_{n,m-1}+(m+1)r_{n-1,m},\]
corresponding to a partial differential equation to which the saddle point method cannot be readily applied.
The number of relaxed trees of size $n$ is then $r_{n,n}$.
We present a method that works directly with a transformed sequence $d_{n,k}$ and the respective recurrence relation. 
We find two explicit sequences $A_{n,k}$ and $B_{n,k}$ with the same asymptotic form, such that
\begin{align}
	\label{eq:boundsintro}
	A_{n,k} \leq d_{n,k} \leq B_{n,k},
\end{align}
for all $k$ and all $n$ large enough. 
The idea is that $A_{n,k}$ and $B_{n,k}$ satisfy the recurrence of $d_{n,k}$ with the equalities replaced by inequalities, allowing us to prove~\eqref{eq:boundsintro} by induction. 
In order to find appropriate sequences $A_{n,k}$ and $B_{n,k}$, we start by performing a heuristic analysis to conjecture the asymptotic shape of $d_{n,k}$ for large $n$. We then prove that the required recursive inequalities hold for sufficiently large $n$ using adapted Newton polygons.

The inductive step in the method described above requires that all coefficients in the recurrence be positive. This occurs in the case of relaxed binary trees but not for compacted binary trees. In the latter case, we construct a sandwiching pair of sequences, each determined by a recurrence with positive coefficients, to which our method applies. 

As an application, we use our results on relaxed and compacted trees to give new asymptotic upper and lower bounds for the number of minimal deterministic finite automata with $n$ states recognizing a finite language on a binary alphabet. These automata are studied in the context of the complexity of regular languages; see~\cite{DomaratzkiKismaShallit2002DFA,liskovets2006exact, domaratzki2006enumeration}. To our knowledge no upper or lower bounds capturing even the exponential term had been proven for this problem. Our bounds are much more accurate, only differing by a polynomial factor, and thereby proving the presence of a stretched exponential term.

As a further extension of our method, some preliminary results show that our approach can be generalized to a $k$-ary version of compacted trees, which in turn settles the enumeration of minimal finite automata recognizing finite languages for an arbitrary alphabet. A follow-up paper in this direction is underway.

In its simplest form, our method applies to two parameter linear recurrences with positive coefficients which may depend on both parameters. We expect, however, that our method could be adapted to handle a much wider range of recurrence relations, potentially involving more than two parameters, negative coefficients and perhaps even some non-linear recurrences. Indeed, we have already seen that it can be adapted to at least one case involving negative coefficients, namely that of counting compacted binary trees.

\paragraph{Plan of the article.}
In Section~\ref{sec:paths} we introduce compacted binary trees and the related relaxed binary trees, and then derive a bijection to Dyck paths with weights on their horizontal steps.
In Section~\ref{sec:heuristic} we show a heuristic method of how to conjecture the asymptotics and in particular the appearance of a stretched exponential term.
Building on these heuristics, we prove exponentially and polynomially tight bounds for the recurrence of relaxed binary trees in Section~\ref{sec:induction} and of compacted binary trees in Section~\ref{sec:compacted}.
In Section~\ref{sec:automata} we show how our results lead to new bounds on minimal acyclic automata on a binary alphabet.

\section{A two-parameter recurrence relation}
\label{sec:paths}

Originally, compacted binary trees arose in a compression procedure in~\cite{flss90} which computes the number of unique fringe subtrees. 
Relaxed binary trees are then defined by relaxing the uniqueness conditions on compacted binary trees. 
As we will not need this algorithmic point of view, we directly give the following definition adapted from~\cite[Definition~3.1 and Proposition~4.3]{GenitriniGittenbergerKauersWallner2016}.

Before we define compacted and relaxed binary trees, let us recall some basic definitions.
A \emph{rooted binary tree} is a plane directed connected graph with a distinguished node called the root, in which all nodes have out-degree either $0$ or $2$ and all nodes other than the root have in-degree $1$, while the root has in-degree $0$. For each vertex with out-degree $2$, the out-going edges are distinguished as a left edge and a right edge. Nodes with out-degree $0$ are called \emph{leaves}, and nodes with out-degree $2$ are called \emph{internal nodes}.
All trees in this paper will be rooted and we omit this term in the future.

\begin{definition}[Relaxed binary tree] \label{def:relaxedtree}
A \emph{relaxed binary tree} (or simply \emph{relaxed tree}) of size~$n$ is a directed acyclic graph obtained from a binary tree with $n$ internal nodes, called its \emph{spine}, by keeping the left-most leaf and turning other leaves into pointers, with each one pointing to a node (internal ones or the left-most leaf) preceding it in postorder.
\end{definition}

The counting sequence $(\rt_n)_{n \in \N}$ of relaxed binary trees of size $n$ starts as follows:
\begin{align*}
	\left( \rt_n \right)_{n \in \N} &= 
		\left( 
			1, 1, 3, 16, 127, 1363, 18628, 311250, 6173791, 142190703, 
			%3737431895, 110577492346, 3641313700916, 132214630355700, 5251687490704524, 226664506308709858,
		\ldots \right).
\end{align*}
It corresponds to \OEIS{A082161} in the On-line Encyclopedia of Integer Sequences.\footnote{\url{https://oeis.org}} 
There, it first appeared as the counting sequence of the number of deterministic, completely defined, initially connected,
acyclic automata with $2$ inputs and $n$ transient, unlabeled states
and a unique absorbing state, yet without specified final states.
This is a direct rephrasing of Definition~\ref{def:relaxedtree} in the language of automata theory; for more details see Section~\ref{sec:automata}.
Liskovets~\cite{liskovets2006exact} provided (probably) the first recurrence relations ($C_2(n)$ used for $r_n$) and 
later Callan~\cite{Callan2008Determinant} showed that they are counted by determinants of Stirling cycle numbers.
However, the asymptotics remained an open problem, which we will solve in the present paper.

%%%%%%%%%%%%%%%%%%%%%%%%%%%%%%%%%%%%%%%%%
%% all compacted trees of size 0,1, and 2
%%%%%%%%%%%%%%%%%%%%%%%%%%%%%%%%%%%%%%%%%
%
\begin{figure}[ht!]
	\centering
	\includegraphics[width=0.8\textwidth]{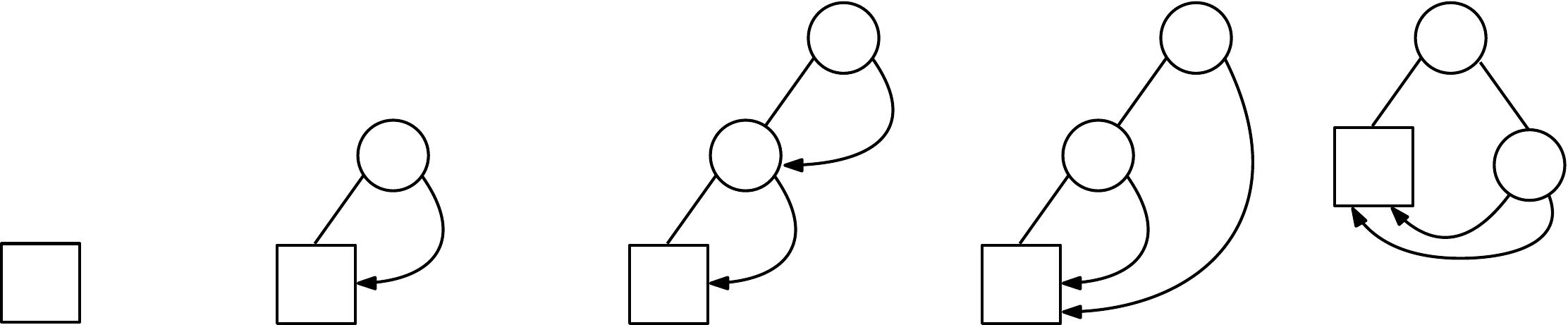}%
	\caption{All relaxed (and also compacted) binary trees of size $0,1,2$, where internal nodes are shown by circles and the unique leaf is drawn as a square. }
	\label{fig:compacted_trees_n123}
\end{figure}
%%%%%%%%%%%%%%%%%%%%%%%%%%%%%%%%%%%%%%%%%

Using the class of relaxed trees, it is then easy to define the set of compacted trees by requiring the uniqueness of subtrees.

\begin{definition}[Compacted binary tree]\label{def:compactedtree}
Given a relaxed tree, to each node $u$ we can associate a binary tree $B(u)$. We proceed by postorder. If $u$ is the left-most leaf, we define $B(u) = u$. Otherwise, $u$ has two children $v, w$, then $B(u)$ is the binary tree with $B(v)$ and $B(w)$ as left and right sub-trees, respectively. 
A \emph{compacted binary tree}, or simply \emph{compacted tree} of size~$n$ is a relaxed tree with $B(u) \neq B(v)$ (i.e., $B(u)$ not isomorphic to $B(v)$) for all pairs of distinct nodes $u,v$.
\end{definition}

Figure~\ref{fig:compacted_trees_n123} shows all relaxed (and compacted) trees of size $n=0,1,2$ and 
Figure~\ref{fig:compacted3invalid} gives the smallest relaxed tree that is not a compacted tree.
The counting sequence $(\ct_n)_{n \in \N}$ of compacted binary trees of size $n$ is \OEIS{A254789} and starts as follows:
\begin{align*}
	\left( \ct_n \right)_{n \in \N} &= 
		\left( 
			1, 1, 3, 15, 111, 1119, 14487, 230943, 4395855, 97608831, 
			%2482988079, 71321533887, 2286179073663, 80984105660415, 3144251526824991, 132867034410319359, 
		\ldots \right).	
\end{align*}

%%%%%%%%%%%%%%%%%%%%%%%%%%%%%%%%%%%%%%%%%
%% the smallest non-compacted tree that is a relaxed tree
%%%%%%%%%%%%%%%%%%%%%%%%%%%%%%%%%%%%%%%%%

\begin{figure}[ht!]
	\centering
	\includegraphics[width=0.19\textwidth]{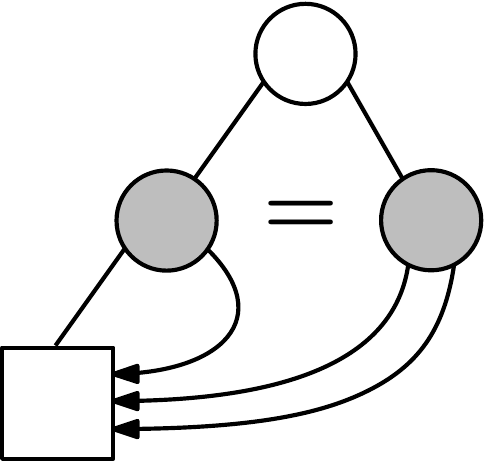}%
	\qquad \qquad \qquad \qquad 
	\includegraphics[width=0.19\textwidth]{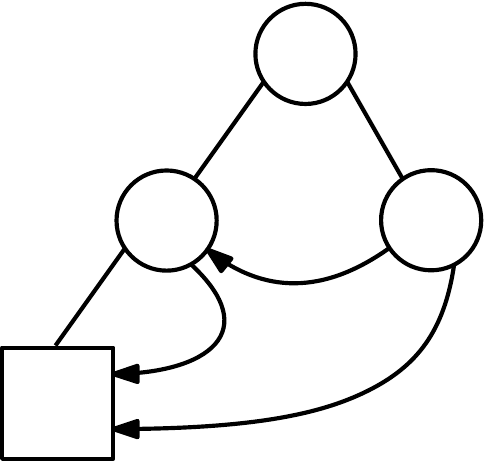}%
	%\vspace{-1mm}
	\caption{(Left) The smallest relaxed binary tree that is not a compacted binary tree, as the two gray subtrees correspond to the same (classical) binary tree. (Right) A valid compacted binary tree of size $3$ with the same spine. 
%\vspace{-3mm}
}
	\label{fig:compacted3invalid}
\end{figure}
%%%%%%%%%%%%%%%%%%%%%%%%%%%%%%%%%%%%%%%%%

In~\cite[Theorem~5.1 and Corollary~5.4]{GenitriniGittenbergerKauersWallner2016} the so-far most efficient recurrences are given for the number of compacted and relaxed binary trees, respectively. 
Computing the first $n$ terms using these requires $\LandauO(n^3)$ arithmetic operations. 
In this section we give an alternative recurrence with only one auxiliary parameter (instead of two) other than the size $n$, which leads to an algorithm of arithmetic complexity $\LandauO(n^2)$ to compute the first $n$ terms of the sequence. 
The construction is motivated by the recent bijection~\cite{Wallner2018R1}.

%%%%%%%%%%%%%%%%%%%%%%%%%%%%%%%%%%%%%
%% Corollary
%%%%%%%%%%%%%%%%%%%%%%%%%%%%%%%%%%%%%
As a corollary of our main result Theorem~\ref{theo:mainasy}, we directly get an estimate of the asymptotic proportion of compacted trees among relaxed trees:
\begin{align*}
%	\label{eq:fraccnrn}
	\frac{c_n}{r_n} = \Theta(n^{-1/4}).
\end{align*}
An analogous result for compacted and relaxed trees of bounded right height was shown in~\cite[Corollary~3.5]{GenitriniGittenbergerKauersWallner2016}. The right height is the maximal number of right edges to internal nodes on a path in the spine from the root to a leaf. Let $c_{k,n}$ (resp.~$r_{k,n}$) be the number of compacted (resp.~relaxed) trees of right height at most~$k$.
Then, %\cite[Corrollary~3.5]{GenitriniGittenbergerKauersWallner2016} states that 
for fixed $k$, 
\begin{align*}
	\frac{c_{k,n}}{r_{k,n}} \sim 
		\lambda_k n^{- \frac{1}{k+3} - \left(\frac{1}{4} - \frac{1}{k+3}\right)\frac{1}{\cos^2\left(\frac{\pi}{k+3} \right)}}
		= \Landauo\left(n^{-1/4} \right),
\end{align*}
for a constant $\lambda_k$ independent of $n$. As $k \to \infty$, we see that the exponent of $n$ approaches~$-1/4$. It is thus not surprising that the exponent in the unbounded case is also~$-1/4$.

%\pagebreak
\subsection{Relaxed binary trees and horizontally decorated paths}

For the subsequent construction, we need the following type of lattice paths. 

\begin{definition}
	\label{def:hordecpath}
	A \emph{horizontally decorated path} $P$ is a lattice path starting from $(0,0)$ with steps $H = (1,0)$ and $V = (0,1)$ confined to the region $0 \leq y \leq x$, where each horizontal step $H$ is decorated by a number in $\{1,\ldots,k+1\}$ with $k$ its $y$-coordinate. If $P$ ends at $(n,n)$, we call it a \emph{horizontally decorated Dyck path}.
\end{definition}

We denote by $\ddyck_n$ the set of horizontally decorated Dyck paths of length $2n$.

\begin{remark}
	Horizontally decorated Dyck paths can also be interpreted as classical Dyck paths, where below every horizontal step a box given by a unit square between the horizontal step and the line $y=-1$ is marked, see Figure~\ref{fig:bij-dyck}. 
	This gives an interpretation connecting these paths with the heights of Dyck paths, which we will exploit later. 
	Independently, Callan gave in~\cite{Callan2008Determinant} a more general bijection in which he called the paths \emph{column-marked subdiagonal paths}, and Bassino and Nicaud studied in~\cite{BassinoNicaud2007Automata} a variation when counting some automata, where the paths stay \emph{above} the diagonal, which they called \emph{$k$-Dyck boxed diagrams}.
\end{remark}

\begin{theo}
	\label{theo:relaxedbij}
	There exists a bijection $\todyck$ between relaxed binary trees of size $n$ and the set $\ddyck_n$ of horizontally decorated Dyck paths of length $2n$. 
\end{theo}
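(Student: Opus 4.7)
The plan is to build $\todyck$ from the postorder traversal of the spine of a relaxed tree: internal nodes produce $V$ steps, the $n$ pointers (i.e.\ the non-leftmost leaves) produce $H$ steps, and each pointer's target is recorded as the decoration of its $H$ step.

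\textbf{Forward map.} Given a relaxed tree $T$ of size $n$, traverse its spine in postorder; emit nothing on the leftmost leaf $\ell_0$, a $V$ step for each of the $n$ internal nodes, and an $H$ step for each of the $n$ pointers. If a pointer is visited when $k$ internal nodes have already been visited, its candidate targets (per Definition~\ref{def:relaxedtree}) are exactly $\ell_0$ together with those $k$ internal nodes, and we decorate the corresponding $H$ step by the rank of the actual target in this ordered list. To see the output lies in $\ddyck_n$, let $(j,k)$ be any intermediate position: $j$ is the number of visited non-leftmost leaves and $k$ the number of visited internal nodes. In postorder, every visited internal node has its entire subtree already completed, so those $k$ internal nodes organise into maximal completed subtrees contributing $k+f$ leaves with $f\ge 1$ whenever $k\ge 1$, and $\ell_0$ is always already visited; hence $j+1\ge k+1$, the path stays in $0\le y\le x$, and it terminates at $(n,n)$. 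The decoration alphabet $\{1,\ldots,k+1\}$ matches the $k+1$ available targets at each $H$ step.

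\textbf{Inverse map.} Given $P\in\ddyck_n$, scan the $H/V$-word left to right with a stack initialised to $\{\ell_0\}$: an $H$ pushes a fresh leaf, and a $V$ pops the top two subtrees and merges them under a new internal node (top subtree on the right). The condition $j\ge k$ prevents stack underflow, and a straightforward induction shows that after the whole word the stack contains a single binary tree with $n$ internal nodes, declared to be the spine. For each $H$ step of height $k$, the decoration $d\in\{1,\ldots,k+1\}$ selects the pointer's target among $\ell_0$ and the $k$ internal nodes already created by preceding $V$ steps, producing a valid relaxed tree.

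\textbf{Why the maps are mutual inverses.} The stack procedure forms each subtree exactly at the moment its root is popped together with its children, so in the forward direction this root is the very next internal node visited in postorder; an induction on $|P|$ then matches $V$ steps to internal nodes in the same order in both directions. Since the set of $k+1$ candidate targets at a given $H$ step is described identically on both sides (namely $\ell_0$ plus the internal nodes attached to preceding $V$ steps), decorations round-trip automatically. The only mildly technical point, and not a genuine obstacle, is this order-of-visitation alignment between the stack parse and the postorder traversal; everything else is bookkeeping.
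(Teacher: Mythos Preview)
Your proof is correct and follows essentially the same approach as the paper: both build $\todyck$ from the postorder traversal of the spine, mapping internal nodes to $V$ steps and non-leftmost leaves (pointers) to $H$ steps, with each $H$ step decorated by the postorder rank of its pointer's target. The only difference is presentational: the paper argues bijectivity by showing equivalence of the constraints on both sides, whereas you give the inverse explicitly via a stack-based parse, which is a standard and welcome elaboration.
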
 

\begin{proof}
	Let $C$ be a relaxed binary tree of size $n$, and $C_*$ its spine. For convenience, we identify the internal nodes in $C$ and $C_*$, and pointers in $C$ with leaves (not the left-most one) in $C_*$.
	
	We now give a recursive procedure transforming $C$ into a horizontally decorated Dyck path $P$. 
	First, we take $C_*$ and label its internal nodes \emph{and the left-most leaf} in postorder from $1$ to $n+1$.
	Next, we define the following function $\mathbf{Path}$ that transforms $C_*$ into a lattice path in $H$ and $V$. Given a binary tree $T$, it either consists of two sub-trees $(T_1, T_2)$, or it is a leaf $\varepsilon$. We thus define $\mathbf{Path}$ recursively by
	\[
	\mathbf{Path}((T_1, T_2)) = \mathbf{Path}(T_1)\mathbf{Path}(T_2)V, \quad\quad \mathbf{Path}(\varepsilon) = H.
	\]
	It is clear that $\mathbf{Path}(C_*)$ starts with $H$ for the left-most leaf. Let $P_0$ be $\mathbf{Path}(C_*)$ with its starting $H$ removed. 	
	Note that $\mathbf{Path}$ performs a postorder traversal on $C_*$ where leaves are matched with $H$ and internal nodes with $V$. Then, $\mathbf{Path}(C_*)$ ends at $(n+1,n)$ and stays always strictly below $y=x$ because every binary (sub-)tree has one more leaf than internal nodes, and each initial segment of $\mathbf{Path}(C_*)$ corresponds to a collection of subtrees of $C_*$. Hence, $P_0$ is a Dyck path. 
	Observe that the $i$-th step $V$ in $P_0$ corresponds to the $(i+1)$-st node in postorder, as the left-most leaf is labeled~$1$.
	Finally, for each step $H$ in $P_0$, we label it by the label of the internal node (or the left-most leaf) to which its corresponding leaf in $C_*$ points in $C$. We thus obtain a Dyck path~$P$ with labels on the horizontal steps, and we define $\todyck(C_*) = P$.
	
	We have seen that the Dyck path $P_0$ is in bijection with the spine $C_*$. To see that the labeling condition on horizontally decorated Dyck paths is equivalent to the condition on relaxed binary trees, we take a pointer $p$ pointing to a node $u$ with label $\ell$ that corresponds to a step $H$ with a certain coordinate $k$. By construction of the Dyck path, $p$ comes after $u$ in postorder if and only if the step $H$ from $p$ comes after the step $V$ from $u$, which is equivalent to $\ell \leq k+1$, as the node with label $1$ is the left-most leaf and is not recorded as a step $H$. We thus have the equivalence of the two conditions, so $\todyck$ is indeed a bijection as claimed.
\end{proof}

\begin{figure}[thbp]
\centering
\includegraphics[width=\textwidth, page=1]{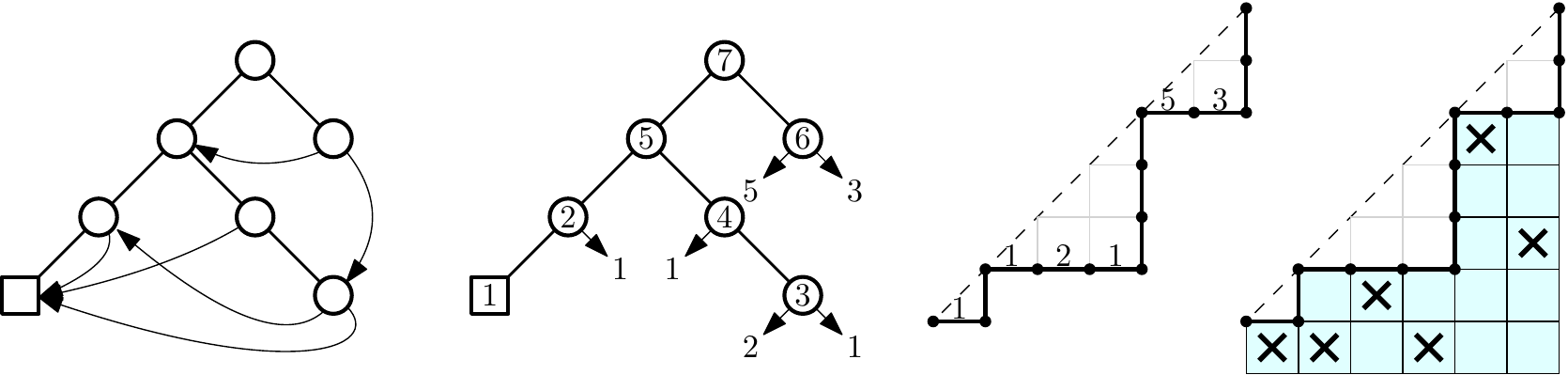}
\caption{Example of the bijection $\todyck$ between relaxed trees and horizontally decorated Dyck paths. It transforms internal nodes into vertical steps and pointers into horizontal steps.}
\label{fig:bij-dyck}
\end{figure}

%The big advantage of horizontally decorated paths is that they are easier to count. 
The following result gives the claimed algorithm with quadratic arithmetic complexity to count such paths, which can also be used as a precomputation step of an algorithm that randomly generates these paths using a linear number of arithmetic operations for each path. These algorithms are also applicable to relaxed binary trees via the bijection $\todyck$.

\begin{prop}
	\label{prop:recrelaxed}
	Let $r_{n,m}$ be the number of horizontally decorated paths ending at $(n,m)$. Then,
	\begin{align*}
		r_{n,m} &= r_{n,m-1} + (m+1) r_{n-1,m}, & \text{ for } &n,m \geq 1 \text{ and } n\geq m,\\
		r_{n,m} &= 0, & \text{ for } & n < m, \\
		r_{n,0} &= 1, & \text{ for } & n \geq 0.
	\end{align*}
	The number of relaxed binary trees of size $n$ is equal to $r_{n,n}$.
\end{prop}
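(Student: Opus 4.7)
The plan is to derive the recurrence by a last-step decomposition of the horizontally decorated paths ending at $(n,m)$, and then to invoke Theorem~\ref{theo:relaxedbij} to identify $r_{n,n}$ with the number of relaxed binary trees.

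First I would dispose of the boundary cases. If $n<m$, the region constraint $0\le y\le x$ in Definition~\ref{def:hordecpath} already forbids the endpoint $(n,m)$, so $r_{n,m}=0$. If $m=0$, the only legal path consists of $n$ consecutive horizontal steps, each lying at height $0$ and therefore admitting the unique decoration $1\in\{1\}$; hence $r_{n,0}=1$ for every $n\ge 0$.

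For the main recurrence, when $n\ge m\ge 1$, I would classify each path $P$ ending at $(n,m)$ according to its last step. If the last step is $V$, removing it yields a decorated path ending at $(n,m-1)$, and since no decoration is erased, the number of such $P$ is exactly $r_{n,m-1}$. If instead the last step is $H$, removing it yields a decorated path ending at $(n-1,m)$, while the removed step itself sits at height $m$ and so carries, by Definition~\ref{def:hordecpath}, a decoration chosen independently from the set $\{1,\ldots,m+1\}$; this contributes $(m+1)\,r_{n-1,m}$. Summing the two cases gives the asserted recurrence. The borderline case $n=m$ is automatic: then $(n-1,m)=(m-1,m)$ violates $y\le x$, so by the already-established boundary condition $r_{n-1,m}=0$, and the $H$-branch correctly vanishes, as it must since a Dyck path cannot terminate with a horizontal step.

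Finally, $r_{n,n}$ is by construction the cardinality of $\ddyck_n$, so Theorem~\ref{theo:relaxedbij} immediately identifies it with the number of relaxed binary trees of size $n$. I do not anticipate any genuine obstacle: this is a routine first-return (or rather last-step) decomposition. The only point that requires a moment's care is that the $H$-branch must be suppressed exactly when it would leave the admissible region, and this is absorbed cleanly into the boundary condition $r_{n-1,m}=0$ for $n=m$.
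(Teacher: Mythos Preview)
Your proof is correct and follows essentially the same approach as the paper: a last-step decomposition into the $V$-case and the $H$-case, with the same treatment of the boundary conditions, and the final identification of $r_{n,n}$ with relaxed trees via the bijection of Theorem~\ref{theo:relaxedbij}. Your handling of the borderline case $n=m$ is slightly more explicit than the paper's, but the argument is otherwise identical.
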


\begin{proof}
	Let us start with the boundary conditions. First of all, no such path is allowed to cross the diagonal $y=x$, thus $r_{n,m}=0$ for $n<m$. Second, the paths consisting only of horizontal steps stay at altitude $0$ and admit therefore just one possible label for each step, i.e., $r_{n,0}=1$ for $n \geq 0$. 
	
	For the recursion we consider how a path can jump to $(n,m)$. It either uses a step~$V$ from $(n,m-1)$ or it uses a step~$H$ from $(n-1,m)$. In the second case, there are $m+1$ possible decorations as the path is currently at altitude $m$.
\end{proof}

\begin{remark}[Compacted trees of bounded right height]
	This restriction naturally translates relaxed binary trees of right height at most $k$ from~\cite{GenitriniGittenbergerKauersWallner2016} into horizontally decorated Dyck paths of height at most $k+1$, where height is the maximal normal distance rescaled by $\sqrt{2}$ from a lattice point on the path to the diagonal. 
	In other words, these paths are constrained to remain between the diagonal and a line translated to the right parallel to the diagonal by $k+1$ unit steps.
\end{remark}

\subsection{Compacted binary trees}

Given a relaxed tree $C$, an internal node $u$ is called a \emph{cherry} if its children in the spine are both leaves and none of them is the left-most one. According to the discussion at the end of Section~4 in \cite{GenitriniGittenbergerKauersWallner2016}, the only obstacle for a relaxed tree to be a compacted tree is a cherry with badly chosen pointers. For the convenience of the reader, we now recall and formalize this observation in the following proposition.

\begin{prop} \label{prop:cherry}
A relaxed tree $C$ is a compacted tree if and only if there are no two nodes $u\neq v$ in $C$ which share the same left child $u_{\ell}$ and the same right child $u_{r}$. Moreover, if $C$ is not a compacted tree, such a pair exists where $v$ is a cherry and $u$ precedes $v$ in postorder.
\end{prop}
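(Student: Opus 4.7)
The plan is a three-step argument. First, the easy direction: if $u \neq v$ share both children targets $u_\ell, u_r$, then the recursive definition of $B$ immediately gives $B(u) = B(v)$, violating compactness, so the existence of such a pair precludes $C$ from being compacted.

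Second, for the converse I would, assuming $C$ is not compacted, select among all pairs $(a,b)$ of distinct nodes with $B(a) = B(b)$ one that minimizes $|B(a)|$. The minimum cannot be $1$ because the left-most leaf is the unique leaf of $C$, so both $a$ and $b$ are internal. Decomposing $B(a) = B(b)$ at the root then gives $B(a_\ell^*) = B(b_\ell^*)$ and $B(a_r^*) = B(b_r^*)$, where the stars denote targets (following any pointer). If either pair of targets were distinct we would obtain a strictly smaller duplicate pair, contradicting minimality; hence $a$ and $b$ share both children targets, which proves the first statement.

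Third, for the ``moreover'' part I would order this minimal pair in postorder as $u \prec v$ and argue that $v$ is automatically a cherry. Suppose not, so some spine-child of $v$ is not a pointer, meaning it is either an internal node $s$ or the left-most leaf. In the internal case, since $v$ is the unique spine-parent of $s$, the matching spine-child of $u$ must be a pointer to $s$; the postorder rule on pointers then forces $s \prec u$, and combined with $u \prec v$ this pushes $u$ strictly inside $v$'s spine-subtree. However $u$ cannot be a descendant of $s$ (that would make $s$ a later ancestor of $u$), and the only other location is the subtree of $v$'s other spine-child, which is either a single pointer leaf (too small to contain an internal node) or itself an internal node to which the identical argument applies. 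The left-most-leaf case is closed by counting: $v$'s spine-subtree would then consist of at most three nodes, leaving no internal candidate for $u$ strictly before $v$ inside it.

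The main obstacle is this final case analysis. It forces me to use three structural facts at the same time --- pointers point strictly backward in postorder, ancestors come strictly after descendants, and every node has a unique parent in the spine --- to trap $u$ in a location that does not exist. Once this is completed, both spine-children of $v$ are necessarily pointers to non-left-most targets, i.e.\ $v$ is a cherry, and $(u,v)$ is the pair required by the proposition.
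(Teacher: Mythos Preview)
Your argument is correct and follows a genuinely different route from the paper. The paper argues by \emph{descent in the spine}: starting from any pair $(u,v)$ with $B(u)=B(v)$ and $u\prec v$, whenever $v$ has an internal spine-child $v'$ one passes to the pair $(u',v')$, where $u'$ is the target of the corresponding child of $u$; since the depth of $v$ strictly increases, this must terminate with $v$ a cherry. You instead first extract the shared-children conclusion directly by minimizing $|B|$, and then prove separately, via contiguity of postorder intervals and the backward-pointer rule, that the later node in such a minimal pair must be a cherry. Your step~2 makes the shared-children conclusion more explicit than the paper does, and your step~3 treats the left-most-leaf corner case directly; the trade-off is that step~3 demands a more delicate case analysis than the paper's one-line descent. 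One small omission in that analysis: in your internal-child case, when you write that $v$'s other spine-child is ``either a single pointer leaf \dots\ or itself an internal node'', you should also allow it to be the left-most leaf~$\ell$---but this is harmless, since that subtree is again a single node and cannot contain the internal node~$u$.
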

\begin{proof}
The ``only if'' part follows directly from Definition~\ref{def:compactedtree}. We now focus on the ``if'' part. Suppose that $C$ is not a compacted tree, which means there is at least a pair of internal nodes $u, v$ such that $u$ precedes $v$ and $B(u) = B(v)$, with $B(u)$ defined in Definition~\ref{def:compactedtree}. 
Now we want to show that there is one such pair with $v$ being a cherry. 
We take such a pair $(u,v)$. 
If $v$ is a cherry, the claim holds. 
Otherwise, without loss of generality, we suppose that the left child $v'$ of $v$ is not a leaf. Let $u_\ell$ be the left child of $u$. If $u_\ell$ is an internal node, we take $u' = u_\ell$. Otherwise, we take $u'$ to be the internal node pointed to by $u_\ell$. By definition, we have $B(u') = B(v')$, and clearly $u'$ precedes $v'$ in postorder. We thus obtain a new pair with the same conditions but of greater depth in the spine. However, since the spine has finite depth, this process cannot continue forever. As it only stops when $v$ is a cherry, we have the existence of such a pair $(u,v)$ with $v$ a cherry.
\end{proof}

The restriction  described in Proposition \ref{prop:cherry} has an analogue in the class of horizontally decorated paths:
We label every step $V$ with its final altitude plus one, which corresponds to its row number in the interpretation with marked boxes, and which also corresponds to the traversal/process order in postorder of its internal node in the relaxed tree; compare Figure~\ref{fig:bij-dyck}. 
Recall that each step $H$ is already labeled. 
For any step $S$, let $\dycklabel(S)$ be its label.
We associate to every step $V$ a pair of integers $(v_1,v_2)$, which correspond to the labels of its left and right children.
First, let $S'$ be the step before $V$ and set $v_2 = \dycklabel(S')$.
Next, draw a line from the ending point of $V$ in the southwest direction parallel to the diagonal, and stop upon touching the path again. Let $S''$ be the last step before $V$ that ends on this line (if there is no such step, set $v_{1}=1$). Then set $v_{1} = \dycklabel(S'')$.

\begin{definition}
	\label{def:hordecpathb}
	A \emph{\cuniquedecopath{} path} $P$ is a horizontally decorated path where the decorations $h_1$ and $h_2$ of each pattern of consecutive steps $HHV$ fulfill $(h_1,h_2) \neq (v_1,v_2)$ for all preceding steps~$V$. 
\end{definition}

\begin{prop}
	The map $\todyck$ bijectively sends the set of compacted trees of size~$n$ to the set of \cuniquedecopath{} Dyck paths of length~$2n$.
\end{prop}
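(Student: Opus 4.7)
The plan is to combine Theorem~\ref{theo:relaxedbij}, which gives $\todyck$ as a bijection between relaxed trees of size~$n$ and horizontally decorated Dyck paths of length~$2n$, with Proposition~\ref{prop:cherry}, which characterizes compacted trees among relaxed trees by a cherry condition. It therefore suffices to show that under $\todyck$ this cherry condition corresponds exactly to the $HHV$-decoration condition defining \cuniquedecopath{} Dyck paths in Definition~\ref{def:hordecpathb}.

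First I would identify cherries with $HHV$ patterns in the image. Since $\todyck$ sends internal nodes to $V$ steps, pointers to $H$ steps, preserves the postorder, and the unique $H$ produced by the leftmost leaf is exactly the step stripped when forming $P_0$, an internal node $u$ is a cherry (both spine children are pointers) if and only if its $V$ step is preceded in $P_0$ by two consecutive $H$ steps. By construction of $\todyck$, the decorations $h_1,h_2$ of that $HH$ are the postorder labels of the targets of the two pointers, i.e.\ the labels of the left and right children of $u$ in the relaxed tree $C$.

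Second I would check that for every preceding $V$ step, the pair $(v_1,v_2)$ from Definition~\ref{def:hordecpathb} equals the pair of postorder labels of the left and right children of the corresponding internal node $u'$ in $C$. Write $u'$'s spine subtree as $(T_1,T_2)$, so that its contribution to $\mathbf{Path}(C_*)$ is $\mathbf{Path}(T_1)\mathbf{Path}(T_2)V$ starting at some lattice point $(X_0,Y_0)$. The step immediately preceding $V$ is the final step of $\mathbf{Path}(T_2)$, which corresponds to the root of $T_2$; its decoration (if $H$) or postorder index (if $V$) is the label of $u'$'s right child, so $v_2$ matches. For $v_1$, a direct recursive computation shows that $\mathbf{Path}(T_1)$ ends at $(X_0+m_1+1, Y_0+m_1)$ with $m_1$ the number of internal nodes in $T_1$, and that this endpoint together with the endpoint of $V$ both lie on the line $y-x=Y_0-X_0-1$, namely the southwest diagonal through $V$'s endpoint. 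Since $\mathbf{Path}(T_2)$ is a translated instance of $\mathbf{Path}$, it stays strictly below this diagonal after leaving its starting point; moreover any step-endpoint of $P_0$ occurring before $u'$'s sub-path has $x$-coordinate strictly smaller than that of $\mathbf{Path}(T_1)$'s endpoint. Consequently $\mathbf{Path}(T_1)$'s endpoint is the first step-endpoint of $P_0$ met when travelling southwest from $V$'s endpoint; its label is the postorder label of the root of $T_1$, i.e.\ of $u'$'s left child, matching $v_1$. The boundary case $v_1=1$ occurs precisely when $\mathbf{Path}(T_1)$ is the stripped initial $H$ of $P_0$, which happens exactly when $u'$ is the spine parent of the leftmost leaf---and in that case $u'$'s left child truly is the leftmost leaf, labeled~$1$.

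Combining these two identifications, the constraint $(h_1,h_2)\neq(v_1,v_2)$ for every preceding $V$ translates under $\todyck$ into: no cherry $v$ shares its ordered pair of children with a preceding internal node $u$. By Proposition~\ref{prop:cherry} this is exactly the condition that $C$ is a compacted tree, so $\todyck$ restricts to the desired bijection between compacted trees of size $n$ and \cuniquedecopath{} Dyck paths of length $2n$. The main delicate point is the geometric identification of $v_1$ with the postorder label of the left child, which requires carefully tracking lattice endpoints produced by the recursive definition of $\mathbf{Path}$ together with the strict-subdiagonal property of its sub-paths.
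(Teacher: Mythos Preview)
Your proposal is correct and follows the same approach as the paper: restrict the bijection of Theorem~\ref{theo:relaxedbij} using Proposition~\ref{prop:cherry}, after checking that the cherry condition on relaxed trees corresponds under $\todyck$ to the $HHV$-decoration condition of Definition~\ref{def:hordecpathb}. The paper's proof is only three sentences and simply asserts that ``C-decorated paths are defined precisely so that their corresponding relaxed trees satisfy the condition of Proposition~\ref{prop:cherry}''; the identification of $(v_1,v_2)$ with the labels of the left and right children is stated without justification in the paragraph preceding Definition~\ref{def:hordecpathb}. You actually supply that justification: your argument that the endpoint of $\mathbf{Path}(T_1)$ lies on the southwest diagonal through the endpoint of $V$, and that $\mathbf{Path}(T_2)$ stays strictly below this diagonal after its starting point (so that $S''$ is indeed the last step of $\mathbf{Path}(T_1)$), is exactly the verification the paper omits. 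Your treatment of the boundary case $v_1=1$ is also correct. In short, your proof is the paper's proof with the implicit geometric claim made explicit.
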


\begin{proof} 
Recall from Theorem~\ref{theo:relaxedbij} that the map $\todyck$ is a bijection sending relaxed trees of size $n$ to the set of horizontally decorated Dyck paths of size $2n$.	C-decorated paths are defined precisely so that their corresponding relaxed trees satisfy the condition of Proposition~\ref{prop:cherry}. Therefore, $\todyck$ forms a bijection between \cuniquedecopath{} paths and compacted trees.
\end{proof}

The key observation for the counting result is that exactly one pair of labels $(h_{1},h_{2})$ is avoided for each preceding step~$V$ of a consecutive pattern $HHV$.
Applying this classification to the previous result we get a similar quadratic-time recurrence for compacted binary trees.

\begin{prop} \label{prop:reccompacted}
	Let $c_{n,m}$ be the number of \cuniquedecopath{} paths ending at $(n,m)$. Then,
	\vspace{-1mm}
	\begin{align*}
		c_{n,m} &= c_{n,m-1} + (m+1) c_{n-1,m} - (m-1) c_{n-2,m-1}, & \text{ for } &n\geq m \geq 1,\\
%		c_{n,m} &= c_{n,m-1} + (m+1) c_{n-1,m}, & \text{ for } &n,m \geq 1 \text{ and } m \in \{n,n-1\},\\
		c_{n,m} &= 0, & \text{ for } & n < m, \\
		c_{n,0} &= 1, & \text{ for } & n \geq 0.
	\end{align*}
	\vspace{-1mm}
	The number of compacted binary trees of size $n$ is equal to $c_{n,n}$.
\end{prop}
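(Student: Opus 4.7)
The plan is to prove the recurrence by decomposing each \cuniquedecopath{} path ending at $(n,m)$ according to its last step, in direct analogy with Proposition~\ref{prop:recrelaxed}, and then to recover the correction term by inclusion--exclusion against the decoration constraint introduced in Definition~\ref{def:hordecpathb}. The boundary conditions will hold for the same reasons as in the relaxed case.

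For a terminal $H$ step, its removal produces a C-decorated path ending at $(n-1,m)$, and conversely appending any $H$ with any of the $m+1$ admissible decorations cannot create a new $HHV$ pattern, since such a pattern must terminate in $V$. This immediately accounts for $(m+1)c_{n-1,m}$. For a terminal $V$ step, I would write $P=P'V$ with $P'$ a C-decorated path ending at $(n,m-1)$. The only $HHV$ pattern possibly introduced sits at the very end, appearing precisely when $P'$ ends in $HH$ with some labels $(h_1,h_2)$; in that case $P$ fails to be C-decorated iff $(h_1,h_2)=(v_1',v_2')$ for some preceding $V$-step of $P'$. So this case contributes $c_{n,m-1}$ minus the number of such \emph{bad} paths $P'$.

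To enumerate bad paths, I would decompose $P'=Q\,H(h_1)H(h_2)$, where $Q$ is a C-decorated path ending at $(n-2,m-1)$; since appending $H$'s does not create new $HHV$ patterns, $P'$ is C-decorated iff $Q$ is. The $V$-steps of $P'$ are exactly the $m-1$ $V$-steps of $Q$, each of which specifies a potential forbidden pair $(v_1',v_2')$. Crucially, these pairs are pairwise distinct: two coinciding pairs would correspond, via the bijection $\todyck$, to two internal nodes of the underlying relaxed tree sharing the same left and right children, which by Proposition~\ref{prop:cherry} forces the existence of a cherry violating C-decoration and thus contradicts the hypothesis on $Q$. Each of the $m-1$ $V$-steps of $Q$ therefore yields one distinct bad pair, all lying in $\{1,\ldots,m\}^2$, which is the admissible label range for $H$-steps at altitude $m-1$ (both $H$- and $V$-labels in $Q$ are bounded above by $m$). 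Summing over $Q$ produces $(m-1)c_{n-2,m-1}$ bad paths, and combining with the $H$-case yields the announced recurrence; the identity $c_n=c_{n,n}$ is immediate from the bijection $\todyck$.

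The main obstacle I anticipate is the distinctness of the $(v_1',v_2')$-pairs attached to different $V$-steps, which underpins the fact that each bad $P'$ is counted exactly once per $V$-step of the prefix~$Q$. I do not see a clean purely path-combinatorial argument for this and would instead appeal to the tree-level characterization, transferring the statement through $\todyck$ and applying Proposition~\ref{prop:cherry} to force the existence of a C-decoration-violating cherry if the distinctness were to fail.
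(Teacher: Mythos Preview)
Your proposal is correct and follows exactly the paper's approach: split by the last step, count $H$-endings directly, and for $V$-endings subtract the bad $HHV$ extensions of a C-decorated prefix at $(n-2,m-1)$. In fact you supply more than the paper does, since the paper's two-sentence proof simply asserts that ``exactly $m-1$'' extensions are bad, while you correctly isolate the distinctness of the $(v_1,v_2)$-pairs over the $m-1$ preceding $V$-steps as the point that needs justification.

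One small technical wrinkle: you invoke $\todyck$ and Proposition~\ref{prop:cherry} for the prefix $Q$, but those statements are formulated for full relaxed/compacted trees, i.e., for horizontally decorated \emph{Dyck} paths, and $Q$ ends at $(n-2,m-1)$ off the diagonal. This is easy to patch: extend $Q$ to a C-decorated Dyck path, for instance by appending $H(m)\,V^{\,n-m}$ to reach $(n-1,n-1)$. The only possible new $HHV$ sits at the junction and has second label $m$, whereas every $(v_1,v_2)$ coming from a $V$-step of $Q$ has $v_2\le m-1$, so the extension remains C-decorated; now Propositions~\ref{prop:cherry} and the bijection apply verbatim and give the distinctness. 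Alternatively, a short direct argument on the path avoids trees altogether: if $V_a$ and $V_b$ with $a<b$ share $(v_1,v_2)=(\ell,r)$, then $\ell,r<a<b$ forces both spine-children of $b$ to be pointers (an internal right child would have label $b-1>r$, and an internal left child would force $a$ into a right subtree that is a single leaf), so $V_b$ is itself a cherry whose $HHV$ collides with $V_a$, contradicting C-decoration of $Q$.
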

\begin{proof}
In the first case, the term $(m+1)c_{n-1,m}$ counts the paths ending with a $H$-step while $c_{n,m-1}-(m-1)c_{n-2,m-1}$ counts the paths ending with a $V$-step. The term $-(m-1)c_{n-2,m-1}$ occurs because, for each  \cuniquedecopath{} path ending at $(n-2,m-1)$, there are exactly $m-1$ paths formed by adding an additional $HHV$ that are not \cuniquedecopath{} paths.
\end{proof}

Note that one might also count the following simpler class which is in bijection with \cuniquedecopath{} paths, albeit without a natural bijection. 

\begin{definition}
	\label{def:hordecpathc}
	A \emph{\csimpledecopath{} path} $P$ is a horizontally decorated path where the decorations $h_1$ and $h_2$ of each pattern of consecutive steps $HHV$ fulfill $h_1 \neq h_2$ except for $h_1=h_2=1$.
\end{definition}

In terms of marked boxes, this constraint translates to the fact that, below the horizontal steps in each consecutive pattern $HHV$, the marks must be in different rows except possibly for the lowest one.

%\pagebreak
\section{Heuristic analysis} \label{sec:heuristic}

In this section, we will explain briefly some heuristics and an \textit{ansatz} that we will apply later to get the asymptotic behavior of $r_n$ and $c_n$. These heuristics are closely related to the asymptotic behavior of Dyck paths and the Airy function.

\subsection{An intuitive explanation of the stretched exponential} \label{sec:heuristic-dyck}

We can consider $r_n$ as a weighted sum of Dyck paths, where each Dyck path $P$ has a weight $w(P)$ that is the number of horizontally decorated Dyck paths that it gives rise to. There is thus a balance of the number of total paths and their weights for the weighted sum $r_{n,n}$. On the one hand, most paths have an (average) height of $\LandauO(\sqrt{n})$ (i.e., mean distance to the diagonal). On the other hand, their weight is maximal if their height is $\LandauO(1)$, i.e., they are close to the diagonal. In other words, typical Dyck paths are numerous but with small weight, and Dyck paths atypically close to the diagonal are few but with enormous weight. The asymptotic behavior of the weighted sum of Dyck paths that we consider should be a result of a compromise between these two forces. We will now make this more explicit by analyzing Dyck paths with height approximately $n^{\alpha}$ for some $\alpha\in(0,1/2)$.

Given a Dyck path $P$ with steps $H=(1,0)$ and $V=(0,1)$ as in Definition~\ref{def:hordecpath}, let $m_i$ be the $y$-coordinate of  the $i$-th step $H$.  
The number of Dyck paths with $m_i$ bounded uniformly satisfy the following property. 

\begin{prop}[{\cite[Theorem~3.3]{kousha2012asymptotic}}]
  For a Dyck path $P$ of length $2n$ chosen uniformly at random, let $m_i$ be the $y$-coordinate of the $i$-th step $H$. For $\alpha < 1/2$, we have
  \[
  \log \PR\left(\max_{1 \leq i \leq n} (i - m_i) < n^{\alpha} \right) \sim -\pi^2 n^{1 - 2\alpha}.
  \]
\end{prop}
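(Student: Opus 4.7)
The plan is to reduce the statement to a classical enumeration problem for Dyck paths of bounded height and then carry out a careful asymptotic expansion. The quantity $i - m_i$ equals the horizontal distance from the $i$-th step $H$ to the diagonal $y=x$; after rotating coordinates by $45^\circ$ (sending $H\to$ down and $V\to$ up), a horizontally decorated Dyck path $P$ of length $2n$ becomes a classical $\pm 1$ Dyck path of length $2n$, and $\max_i(i-m_i)$ becomes the height of this classical path. So the event in question is precisely that the classical Dyck path has height at most $h_n := \lceil n^\alpha\rceil - 1$.

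Next, I would invoke the well-known exact formula for the number $D_{n,h}$ of Dyck paths of length $2n$ of height at most $h$, obtained either by iterated reflection,
\[
D_{n,h} \;=\; \sum_{j \in \Z}\left[\binom{2n}{n-j(h+2)} - \binom{2n}{n-1-j(h+2)}\right],
\]
or equivalently by diagonalizing the tridiagonal transfer matrix restricted to the strip $\{0,1,\dots,h\}$, whose eigenvalues are $2\cos(\pi k/(h+2))$ for $k=1,\dots,h+1$. The spectral form gives
\[
D_{n,h} \;=\; \frac{2^{2n+1}}{h+2}\sum_{k=1}^{h+1}\sin^{2}\!\left(\tfrac{\pi k}{h+2}\right)\cos^{2n}\!\left(\tfrac{\pi k}{h+2}\right).
\]
Dividing by the Catalan number $C_n \sim 4^n/(\sqrt{\pi}\,n^{3/2})$ yields the probability in the statement.

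The asymptotics then reduce to analyzing this sum when $h = h_n = \Theta(n^\alpha)$ with $\alpha < 1/2$. The $k=1$ term dominates because $\cos(\pi/(h+2)) > \cos(2\pi/(h+2))$, and the ratio between successive terms is itself a power of $\cos$, so the tail is geometric with rate bounded away from $1$ (at least once $k/h$ is bounded below). Using
\[
\cos\!\left(\tfrac{\pi}{h+2}\right) \;=\; 1 - \tfrac{\pi^2}{2(h+2)^2} + O(h^{-4}),
\]
one gets $\cos^{2n}(\pi/(h+2)) = \exp\!\bigl(-\pi^2 n/(h+2)^2 + O(n/h^4)\bigr)$. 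With $h = n^\alpha$ and $\alpha<1/2$, the leading contribution is $\exp(-\pi^2 n^{1-2\alpha})$, while the polynomial prefactors (the $1/(h+2)$, the $\sin^2$ factor, the Catalan normalization $n^{3/2}/\sqrt\pi$) contribute only powers of $n$, which vanish upon taking the logarithm and dividing by $n^{1-2\alpha}$. The correction term $O(n/h^4) = O(n^{1-4\alpha})$ is of smaller order than $n^{1-2\alpha}$, so it also disappears under the $\sim$ relation.

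The main obstacle is the bookkeeping at the two extremes: for $\alpha$ close to $1/2$, the correction $n/h^4$ is small but the sum has many non-negligible terms, so one must show the $k\geq 2$ terms are genuinely dominated by the $k=1$ term; for $\alpha$ close to $0$, the cosine expansion requires keeping enough terms to confirm the remainder is negligible. Both can be handled by splitting the sum at a carefully chosen index and applying the elementary inequality $\cos^{2n}(x) \le e^{-nx^2}$; this is routine but must be done with explicit constants to match the $\sim -\pi^2 n^{1-2\alpha}$ precision claimed.
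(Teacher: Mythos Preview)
The paper does not prove this proposition; it is quoted as Theorem~3.3 of \cite{kousha2012asymptotic} and used only as input to the heuristic discussion in Section~\ref{sec:heuristic-dyck}. So there is no ``paper's proof'' to compare against.

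Your outline is correct and is the standard transfer-matrix approach. The identification of $\max_i(i-m_i)$ with the height of the associated $\pm 1$ excursion is right (minor slip: the proposition concerns ordinary Dyck paths, not horizontally decorated ones, so drop that adjective). The spectral formula for $D_{n,h}$ is correct, and your asymptotic bookkeeping is sound: since $0<\alpha<1/2$ forces $n/(h_n+2)^2\sim n^{1-2\alpha}\to\infty$, the ratio between the $k$-th and $(k{+}1)$-st terms tends to~$0$, so the $k=1$ (and symmetric $k=h+1$) term dominates; crude bounds such as $\sin^2\le 1$ and ``at most $h+1$ terms'' already give matching upper and lower bounds up to a polynomial factor, which disappears after taking logarithms. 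The correction terms you list, $O(n^{1-4\alpha})$ from the next Taylor coefficient of $\log\cos$ and $O(n^{1-3\alpha})$ from $h_n+2=n^\alpha+O(1)$, are both $o(n^{1-2\alpha})$, so the claimed $\sim -\pi^2 n^{1-2\alpha}$ follows. Your remark about the regime $\alpha$ close to $1/2$ is slightly overstated: as long as $\alpha<1/2$ strictly, the first term still dominates in the limit and no splitting of the sum is needed; the delicate case is exactly $\alpha=1/2$, which is excluded.
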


Let $w(P)$ the number of horizontally decorated Dyck paths whose unlabeled version is the Dyck path $P$. For a randomly chosen Dyck path $P$ of length $2n$ with $i-m_i$ bounded uniformly by $n^\alpha$, we heuristically expect most values of $i-m_i$ to be of the order $\Theta(n^\alpha)$, with $i$ of order $\Theta(n)$. 
This leads to the following approximation:
\begin{align*}
\log \frac{w(P)}{n!} = \sum_{1 \leq i \leq n} \log\left( \frac{m_i+1}{i} \right) = \sum_{1 \leq i \leq n} \log\left( 1 - \frac{i - m_i-1}{i} \right) \approx c n \cdot \left( - \frac{n^\alpha}{n} \right) = - c n^\alpha.
\end{align*}
Here, $c>0$ is some constant depending on $\alpha$. This approximation is only heuristically justified and very hard to prove. The contribution of Dyck paths with $i-m_i$ uniformly bounded by $n^\alpha$ should thus roughly be $n! 4^n \exp(-(1+o(1))c' n^{p(\alpha)})$, with $p(\alpha) = \min(\alpha, 1 - 2\alpha)$ and $c'>0$ a constant depending on $\alpha$. Here, $4^n$ comes from the growth constant of Dyck paths. The function $p(\alpha)$ is minimal at $\alpha = 1/3$, which maximizes the contribution, leading to the following heuristic guess that the number of relaxed binary trees $r_n$ should satisfy
\[
\log \frac{r_n}{n! 4^n} \underset{n \to \infty}{\sim} - a n^{1/3}, %\xrightarrow[n \to \infty]{}
\]
for some constant $a>0$. Furthermore, we anticipate that the main contribution should come from horizontally decorated Dyck paths with $i - m_i$ mostly of order $\Theta(n^{1/3})$. Since most such $i$'s should be of order $\Theta(n)$, we can even state the condition above as $x - y = \Theta(y^{1/3})$ for most endpoints $(x,y)$ of horizontal steps. This heuristic is the starting point of our analysis.

\subsection{Weighted Dyck meanders}
\label{sec:meander}
\newcommand{\rs}{d}
\newcommand{\rsr}{\tilde{d}}

The heuristics of the previous section suggest that the mean distance to the diagonal will play an important role. 
Therefore, we propose another model of lattice paths emphasizing this distance. 
A \tdef{Dyck meander} (or simply a \tdef{meander}) $M$ is a lattice path consisting of up steps $U = (1,1)$ and down steps $D = (1,-1)$ while never falling below $y=0$. It is clear that Dyck paths of length $2n$ are in bijection with Dyck meanders of length $2n$ ending on $y=0$ with the transcription $H \to U, V \to D$. This bijection can also be viewed geometrically as the linear transformation $x' = x + y, y' = x - y$. This transformation will simplify the following analysis. We can consider Dyck meanders as initial segments of Dyck paths. 

Furthermore, we have seen that a rescaling by $n!$ seems practical.
So we consider the following weight on steps $U$ in a meander $M$. If $U$ starts from $(a,b)$, then its weight is $(a-b+2)/(a+b+2)$, and the weight of $M$ is the product of the weights of its steps $U$. Let $\rs_{n,m}$ denote the weighted sum of meanders ending at $(n,m)$. We get the following recurrence for $\rs_{n,m}$.

\begin{prop} \label{prop:rec-meander-relaxed}
The weighted sum $\rs_{n,m}$ defined above for meanders ending at $(n,m)$ satisfies the recurrence
\begin{align}
	\label{eq:relaxedrecsimp}
	\left\{
		\begin{array}{rlrl}
		\rs_{n,m} &= \frac{n-m+2}{n+m} \rs_{n-1,m-1}+ \rs_{n-1,m+1}, & \text{ for } &n> 0, m\geq0,\\
		\rs_{0,m} &= 0, & \text{ for } & m>0, \\
		\rs_{n,-1} &= 0, & \text{ for } & n \geq 0, \\
		\rs_{0,0} &= 1.&&
		\end{array}
	\right.
\end{align}
\end{prop}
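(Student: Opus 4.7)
The plan is a direct case analysis on the last step of a Dyck meander ending at $(n,m)$, combined with the multiplicativity of the weight. First I would dispose of the boundary conditions by inspection: only the empty meander reaches $(0,0)$ and its weight is the empty product $1$, giving $d_{0,0}=1$; no meander of length zero can end at any $(0,m)$ with $m>0$; and the constraint $y\geq 0$ forbids any meander from ending at altitude $-1$. This handles the three initial/boundary lines of \eqref{eq:relaxedrecsimp}.

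For the main recurrence, I would fix $n\geq 1$ and $m\geq 0$ and split the set of meanders ending at $(n,m)$ according to whether the last step is $D$ or $U$. Since the weight of a meander is defined as the product of weights over its $U$-steps, removing the last step factors the weight cleanly. If the last step is $D=(1,-1)$, it carries weight $1$ and the penultimate vertex is $(n-1,m+1)$; summing over all such prefixes gives a contribution of $d_{n-1,m+1}$ (with the convention $d_{n-1,m+1}=0$ being used implicitly only if no such meander exists, which does not occur for $m\geq 0$). If the last step is $U=(1,1)$, it starts from $(n-1,m-1)$, so by the defining formula its weight is
\[
\frac{(n-1)-(m-1)+2}{(n-1)+(m-1)+2}=\frac{n-m+2}{n+m},
\]
and the total contribution from this case is $\frac{n-m+2}{n+m}\,d_{n-1,m-1}$.

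Summing the two cases yields exactly the recurrence. I would note that the boundary cases fit seamlessly: when $m=0$, the hypothetical $U$-contribution comes from $(n-1,-1)$, but $d_{n-1,-1}=0$ makes the corresponding term vanish, matching the fact that no meander can end at $(n,0)$ via a step $U$. Similarly, when $n=m$, the $D$-contribution from $(n-1,m+1)$ is zero since a meander of length $n-1$ cannot reach altitude $m+1>n-1$; here $d_{n-1,m+1}=0$ automatically, consistent with the recurrence.

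I do not expect a genuine obstacle here; the only subtle point is confirming that the weight is multiplicative over the decomposition last step plus prefix, which is immediate from the definition as a product over $U$-steps, together with the observation that the weight of a $U$-step depends only on its starting vertex and not on the history of the prefix. With that, the case analysis produces the desired identity termwise.
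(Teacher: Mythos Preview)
Your proposal is correct and follows essentially the same approach as the paper: a last-step decomposition of the meander, with the boundary conditions read off directly from the definition. Your version is in fact more detailed than the paper's, explicitly verifying the weight of the final $U$-step and checking that the conventions $d_{n-1,-1}=0$ and $d_{n-1,m+1}=0$ (for $m+1>n-1$) make the edge cases consistent.
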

\begin{proof}
We concentrate on the first case, as the boundary cases follow directly from the definition of meanders. Given a meander ending at $(n,m)$ with $n > 0$, the last step may be an up step or a down step. The contribution of the former case is $\frac{n-m+2}{n+m} \rs_{n-1,m-1}$, with the weight of the last up step taken into account. The contribution of the latter case is simply $\rs_{n-1,m+1}$. We thus get the claimed recurrence.
\end{proof}

\begin{coro} \label{coro:meander-in-dyck}
For integers $m, n$ of the same parity, we have
\[
\rs_{n,m}=\frac{1}{((n+m)/2)!}r_{(n+m)/2,(n-m)/2}.
\]
When $m,n$ are not of the same parity, we have $\rs_{n,m} = 0$.

In particular, the number of relaxed trees of size $n$ is given by $n! \rs_{2n,0}$.
\end{coro}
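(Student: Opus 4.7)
My plan is to prove the corollary via the linear bijection already hinted at in Section~\ref{sec:meander}, namely the map $\Phi:(x,y)\mapsto(x+y,x-y)$ which sends the horizontal and vertical steps $H=(1,0),V=(0,1)$ to the up and down steps $U=(1,1),D=(1,-1)$. I would verify that $\Phi$ carries horizontally decorated paths to Dyck meanders with the correct endpoint correspondence, and then check that the multiplicative weight used to define $d_{n,m}$ exactly accounts for the decorations used to define $r_{n,m}$.

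First I would note that $\Phi$ is an affine bijection of the integer lattice which sends the region $\{0\le y\le x\}$ to the half-plane $\{y\ge 0\}$. Hence $\Phi$ restricts to a bijection between underlying (unweighted and undecorated) paths from $(0,0)$ to $(a,b)$ with $a\ge b\ge 0$, and Dyck meanders from $(0,0)$ to $(a+b,a-b)$. Setting $(n,m)=(a+b,a-b)$ gives $a=(n+m)/2$ and $b=(n-m)/2$, which are non-negative integers precisely when $n\ge m\ge 0$ and $n,m$ share the same parity; otherwise there is no meander reaching $(n,m)$, so $d_{n,m}=0$, which is the second claim of the corollary.

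Next I would track the weights. Consider the $i$-th $H$-step of a decorated path; it necessarily begins at a point $(i-1,k)$, where $k$ is the current altitude. Under $\Phi$ this step becomes a $U$-step starting at $(i-1+k,i-1-k)$, and by the definition in Section~\ref{sec:meander} its weight is
\[
\frac{(i-1+k)-(i-1-k)+2}{(i-1+k)+(i-1-k)+2}=\frac{k+1}{i}.
\]
Since $D$-steps carry weight~$1$, the total weight of the meander is $\prod_{i=1}^{a}(k_i+1)/i=\frac{1}{a!}\prod_{i=1}^{a}(k_i+1)$, where $k_i$ is the altitude of the $i$-th $H$-step of the preimage. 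But $\prod_{i=1}^a(k_i+1)$ is exactly the number of admissible decorations of that underlying horizontally decorated path, since the $i$-th $H$-step, located at altitude $k_i$, admits $k_i+1$ labels. Summing over all meanders ending at $(n,m)$ therefore yields $\frac{1}{a!}$ times the sum over all underlying paths of the number of admissible decorations, which equals $\frac{1}{a!}r_{a,b}$. Specializing to $m=0$ and replacing $n$ by $2n$, together with Proposition~\ref{prop:recrelaxed}, gives the final formula $n!\,d_{2n,0}=r_{n,n}$ for the number of relaxed trees of size~$n$.

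I do not expect any real obstacle: once $\Phi$ is set up correctly the weight computation is essentially one line. As a sanity check I would re-derive the identity by induction on $n$, comparing the recurrence~\eqref{eq:relaxedrecsimp} with that of Proposition~\ref{prop:recrelaxed} after the change of variables $a=(n+m)/2,\,b=(n-m)/2$; the substitutions $n+m=2a$ and $n-m+2=2(b+1)$ then reduce $d_{n,m}=\frac{n-m+2}{n+m}d_{n-1,m-1}+d_{n-1,m+1}$ to $r_{a,b}=(b+1)r_{a-1,b}+r_{a,b-1}$ divided by $a!$, closing the induction.
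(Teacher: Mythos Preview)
Your proof is correct. The paper's own proof is much terser: it simply says to compare the recurrence of Proposition~\ref{prop:rec-meander-relaxed} with that of Proposition~\ref{prop:recrelaxed} under the proposed change of variables, i.e.\ exactly the inductive verification you give at the end as a ``sanity check''. Your primary argument is instead a direct bijective one via the linear map $\Phi:(x,y)\mapsto(x+y,x-y)$, computing that the weight of the $U$-step image of the $i$-th $H$-step (at altitude $k$) is $(k+1)/i$, so that the meander weight of a path is $\tfrac{1}{a!}$ times the number of its admissible decorations. This is a genuinely more explanatory route than the paper's: it shows \emph{why} the factorial normalization appears and why the weight $(a-b+2)/(a+b+2)$ on $U$-steps was the right choice, rather than merely verifying that both sides satisfy the same recurrence. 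The paper's approach, on the other hand, is shorter and avoids tracking individual steps. Since you include both, nothing is missing.
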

\begin{proof}
It is clear that meanders can only end on points $(n,m)$ for $n,m$ of the same parity. In this case, it suffices to compare Proposition~\ref{prop:recrelaxed} with Proposition~\ref{prop:rec-meander-relaxed} under the proposed equality.
\end{proof}

For some simple cases of $\rs_{n,m}$, elementary computations show that $\rs_{n,m} = 0$ for $m>n$, $\rs_{n,n} = \frac{1}{n!}$, $\rs_{n,n-2} = \frac{2^{n-1}-1}{(n-1)!}$ and $\rs_{ n,n-4} = \frac{7 \cdot 3^{n-3} - 2^n + 1}{2(n-2)!}$.

%\pagebreak
%%%%%%%%%%%%%%%%%%%%%%%%%%%%%%%%%%%%%%%%%%%%%%%%%%%%%%%%%%%%
\subsection{Analytic approximation of weighted Dyck meanders} \label{sec:heuristic-analytic}

The heuristic in Section~\ref{sec:heuristic-dyck} suggests that the main weight of $\rs_{n,m}$ comes from the region $m = \Theta(n^{1/3})$. It thus suggests an approximation of $\rs_{n,m}$ of the form
\begin{equation} \label{eq:ansatz}
  \rs_{n,m}\sim f(n^{-1/3}(m+1))h(n),
\end{equation}
for some functions $f$ and $h$, where we expect $h(n)\approx 2^{n}\rho^{n^{1/3}}$ for some $\rho$. The idea is that $h(n)$ describes how the total weight for a fixed $n$ grows, and $f(\kappa)$ describes the rescaled weight distribution in the main region $m = \Theta(n^{1/3})$.

Let $s(n)$ be the ratio $\frac{h(n)}{h(n-1)}$. Suppose that $m=\kappa n^{1/3}-1$, the recurrence relation becomes
\begin{align} \label{eq:rec-equation}
  f(\kappa) s(n) = \frac{n-\kappa n^{1/3}+3}{n+\kappa n^{1/3}-1} f\left((n-1)^{-1/3}(\kappa n^{1/3}-1)\right) + f\left((n-1)^{-1/3}(\kappa n^{1/3}+1)\right).
\end{align}

Now, since we expect $h(n) \approx 2^n \rho^{n^{1/3}}$, we postulate that the ratio $s(n)$ behaves like
\begin{equation} \label{eq:ratio-sn}
  s(n) = 2+cn^{-2/3}+O(n^{-1}),
\end{equation}
and that $f(\kappa)$ is analytic. Using these assumptions, we can expand \eqref{eq:rec-equation} as a Puiseux series in $1/n$. Moving all terms to the right-hand side yields
\[0=\left((c+2\kappa)f(\kappa)-f''(\kappa)\right)n^{-2/3}+O(n^{-1}).\]
Solving the differential equation $(c+2\kappa)f(\kappa)-f''(\kappa)=0$ under the condition $f(\kappa) \to 0$ when $\kappa \to \infty$ yields the unique solution (up to multiplication by a constant)
\begin{equation}
  \label{eq:analytic-approx}
  f(\kappa)=b\Ai\left(\frac{c+2\kappa}{2^{2/3}}\right).
\end{equation}
The condition on the behavior of $f(\kappa)$ near $\infty$ is motivated by the experimental observation that $\rs_{n,m}$ is quickly decaying for $m$ close to $n$.
%Compare with the formulas for $\rs_{n,n}$, $\rs_{n,n-2}$, and $\rs_{n,n-4}$ given above.
We also insist that $f(0)=0$ as $\rs_{n,-1}=0$, which implies that $c=2^{2/3}a_{1}$ where $a_{1}\approx-2.338$ is the first root of the Airy function $\Ai(x)$, i.e.~the largest one as all roots are on the real negative axis; see~\cite[p.~450]{AbramowitzStegun1964}.
Now, using this conjectural value of $c$, it follows that (ignoring polynomial terms)
\[h(n)\approx 2^n\exp\left(3a_1 (n/2)^{1/3}\right).\]
This suggests that the number of relaxed trees $r_{n}=n!\rs_{2n,0}$ behaves like
\[r_{n}\approx n!4^{n}\exp\left(3a_1 n^{1/3}\right),\]
which is compatible with what we want to prove.

We observe that \eqref{eq:rec-equation} can be expanded into a Puiseux series of $n^{1/3}$ by taking appropriate series expansions of $f(\kappa)$ and $s(n)$. Therefore, to refine the analysis above, it is natural to look at the expansion of $s(n)$ in \eqref{eq:ratio-sn} to more subdominant terms, and to postulate a more refined \textit{ansatz} of $d_{n,m}$ than \eqref{eq:ansatz}, probably as a series in $n^{1/3}$. Indeed, if we take
\[\rs_{n,m} \sim \left(f(n^{-1/3}(m+1)) + n^{-1/3}g(n^{-1/3}(m+1)) \right) h(n)\]
and
\[s(n) = 2 + c n^{-2/3} + d n^{-1} + O(n^{-4/3}),\]
then using the same method we can reach the polynomial part of the asymptotic behavior of $r_n$ as
\[r_{n} \approx n!4^{n} \exp\left(3a_1 n^{1/3}\right)n.\]
In general, we can postulate
\[\rs_{n,m} \approx h(n) \sum_{j=0}^{k} f_{j}(n^{-1/3}(m+1)) n^{-j/3},\]
and
\[s(n) = 2 + \gamma_{2} n^{-2/3} + \gamma_{3} n^{-1} + \ldots + \gamma_{k} n^{-k/3} + o(n^{-k/3}).\]
The proof of our main result on relaxed binary trees is based on choosing the cutoff appropriately, and using perturbations of that truncation to bound $r_{n}$.

%%%%%%%%%%%%%%%%%%%%%%%%%%%%%%%%%%%%%%%%%%%%%%%%%%%%%%%%%%%%
\subsection{Discussion on the constants}
One of the first steps in our method 
%for computing the constant terms 
involves taking ratios $h(n)/h(n-1)$ (or equivalently $r_{n}/r_{n-1}$) of successive terms. From the leading asymptotic behavior of these ratios we can deduce the exact asymptotic form up to the constant term. Unfortunately, however, this method makes it impossible to exactly determine the constant term $\gamma_{r}$. In this section we give estimates of the constant terms: we believe that there are constants $\gamma_{r}\approx166.95208957$ and $\gamma_{c}\approx173.12670485$ such that 
\[c_{n}\sim \gamma_{c}n!4^{n}e^{3a_1n^{1/3}}n^{3/4}~~~~\text{and}~~~~r_{n}\sim \gamma_{r}n!4^{n}e^{3a_1n^{1/3}}n.\]

\newcommand{\picwidthconst}{165pt}
\begin{figure}[htb!]
	\begin{subfigure}{0.48\textwidth}
		\begin{picture}(190,190)
		\put(18,15){\includegraphics[width=\picwidthconst]{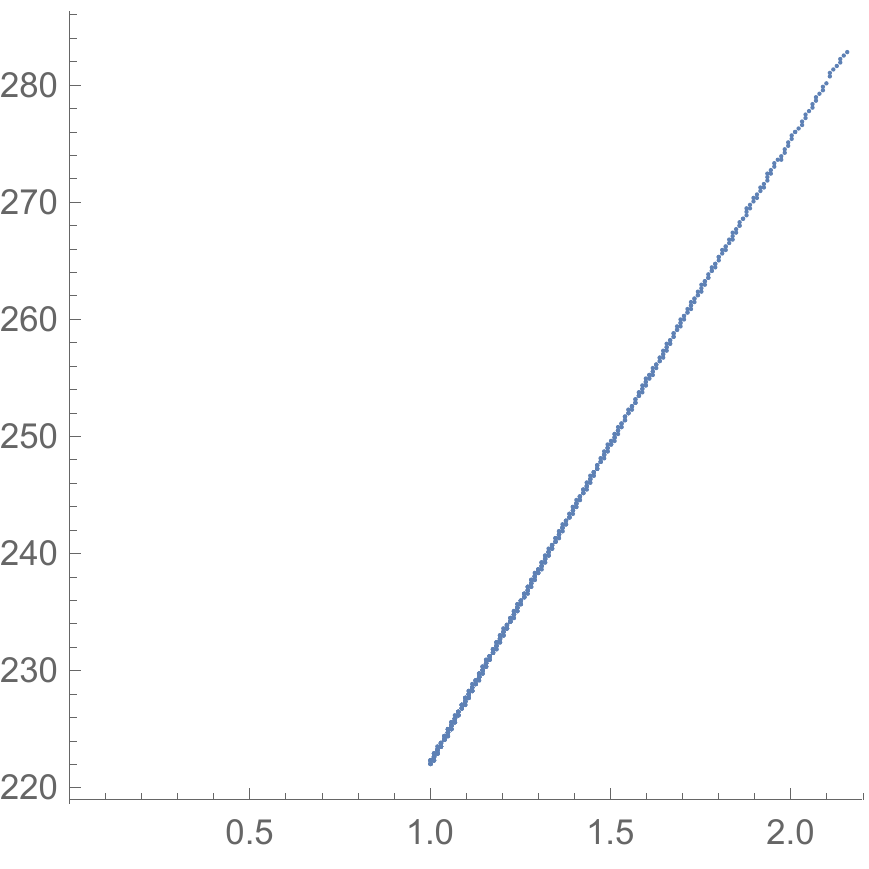}}
		\put(0,100){$u_{n}$}
		\put(100,0){$10n^{-1/3}$}
		\end{picture}
		
		\subcaption{Plot of $u_{n}$ vs.~$10n^{-1/3}$ for approximating the constant term $\gamma_{r}$ of relaxed trees. \\~}
		\label{fig:relaxed_constant_approx_bad}
	\end{subfigure}\hfill
	\begin{subfigure}{0.48\textwidth}%  figure placement: here, top, bottom, or page
		\begin{picture}(190,190)
		\put(18,15){\includegraphics[width=\picwidthconst]{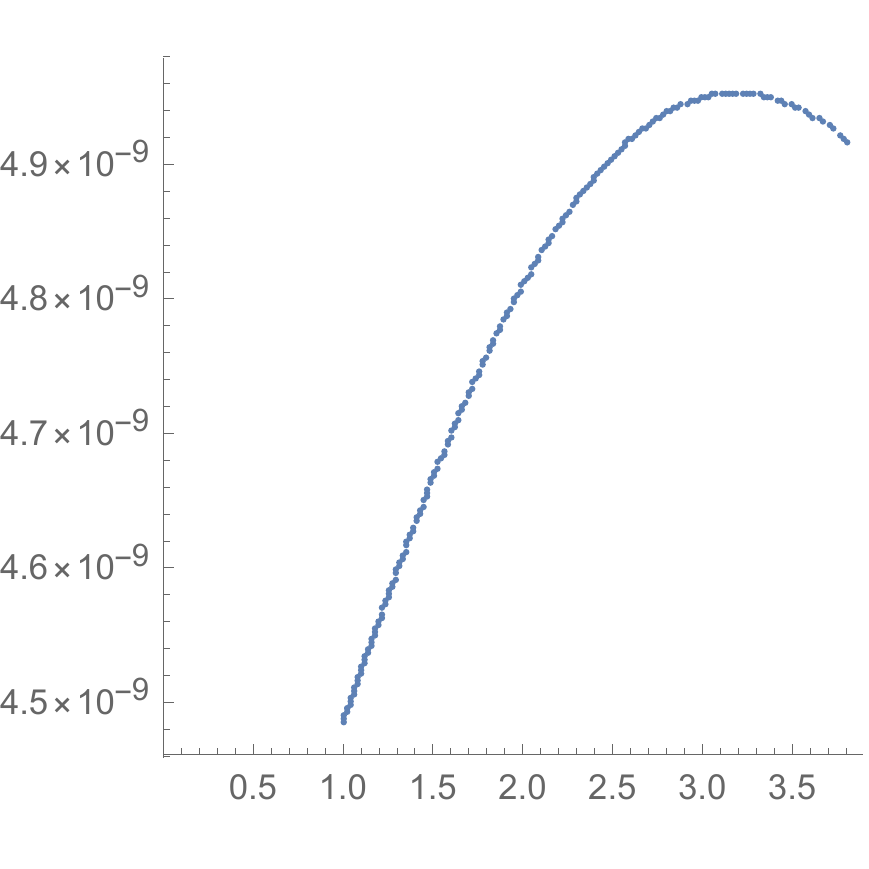}}
		\put(0,100){$\hat{v}_{n}$}
		\put(100,0){$10^{18}n^{-6}$}
		\end{picture}
		
		\subcaption{Plot of $\hat{v}_{n}=v_{n}-166.95208957$ vs.~$10^{18}n^{-6}$, where $v_{n}$ approximates the constant term $\gamma_{r}$ for relaxed trees.}
		\label{fig:relaxed_constant_approx_good}
	\end{subfigure}

	\begin{subfigure}{0.48\textwidth}
		\begin{picture}(190,190)
		\put(18,15){\includegraphics[width=\picwidthconst]{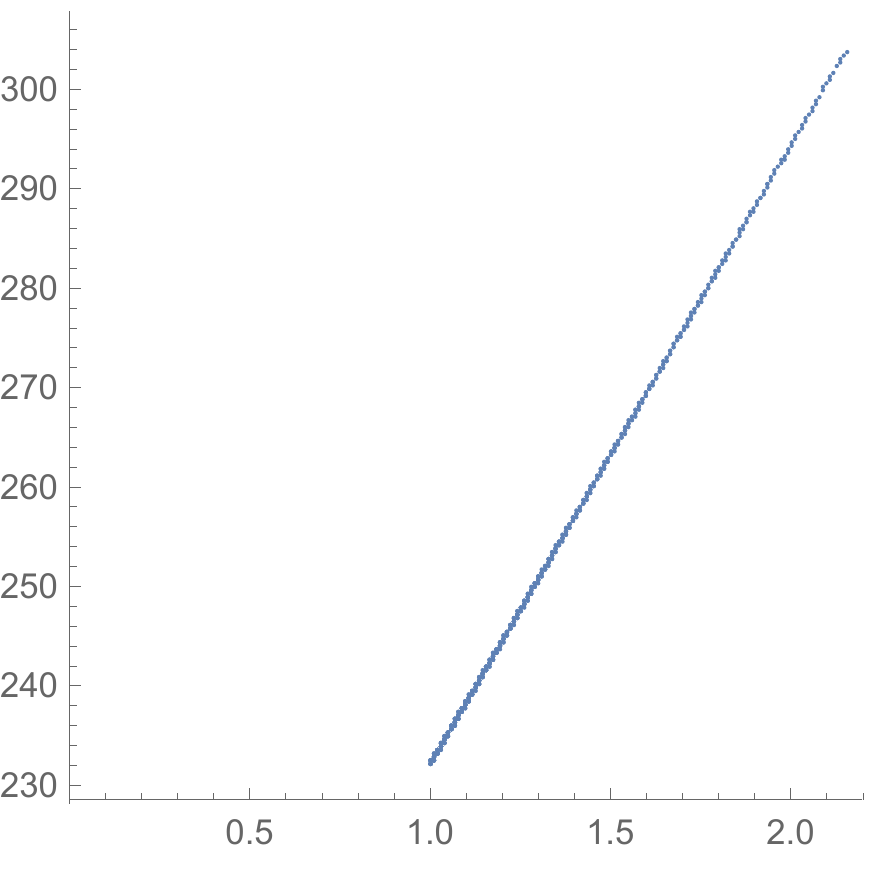}}
		\put(0,100){$u_{n}$}
		\put(100,0){$10n^{-1/3}$}
		\end{picture}
		
		\subcaption{Plot of $u_{n}$ vs.~$10n^{-1/3}$ for approximating the constant term $\gamma_{c}$ of compacted trees.\\}
		\label{fig:compacted_constant_approx_bad}
	\end{subfigure}\hfill
	\begin{subfigure}{0.48\textwidth}%  figure placement: here, top, bottom, or page
		\begin{picture}(190,190)
		\put(18,15){\includegraphics[width=\picwidthconst]{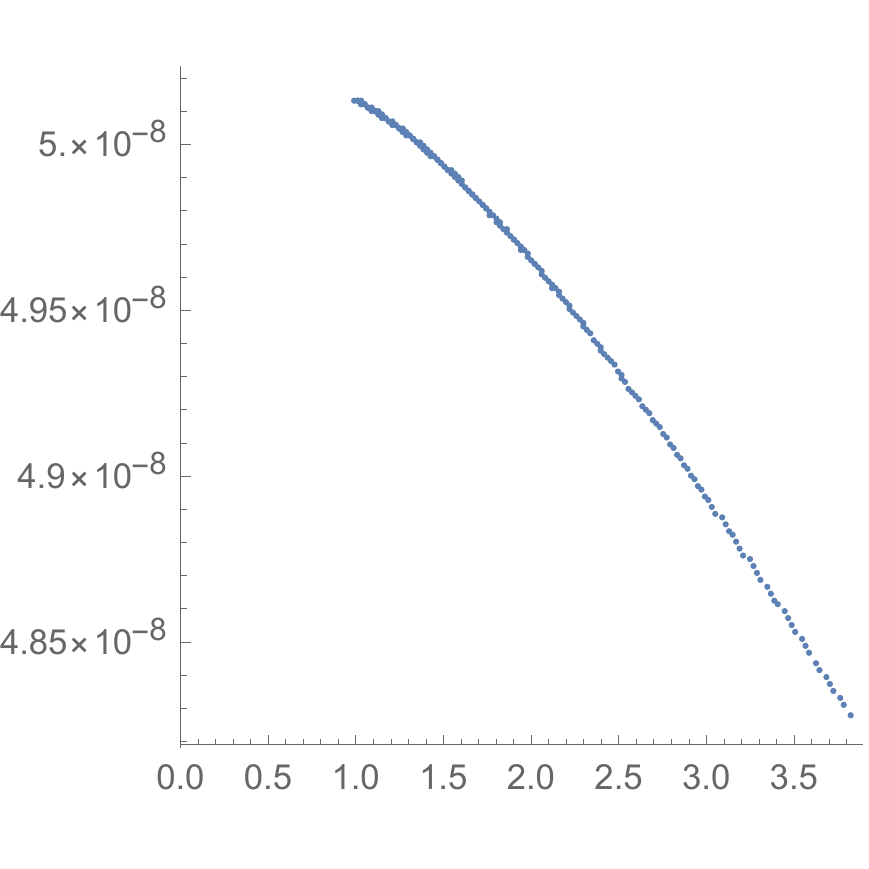}}
		\put(0,100){$\hat{v}_{n}$}
		\put(100,0){$10^{18}n^{-6}$}
		\end{picture}
		
		\subcaption{Plot of $\hat{v}_{n}=v_{n}-173.1267048$ vs.~$10^{18}n^{-6}$, where $v_{n}$ approximates the constant term $\gamma_{c}$ of compacted trees.}
		\label{fig:compacted_constant_approx_good}
	\end{subfigure}
	\caption{Plots for $800\leq n\leq1000$ visualizing the numerical approximation of the leading constants $\gamma_r$ and $\gamma_c$ of relaxed and compacted trees, respectively. Note that the scalings on the $x$-axes with $10n^{-1/3}$ and $10^{18}n^{-6}$ are chosen because $n$ is close to $1000$.}
\end{figure}

Based on the analysis in Section \ref{sec:heuristic-analytic}, we expect the ratios $r_{n}/r_{n-1}$ to behave like
\[\frac{r_{n}}{r_{n-1}}=\sum_{j=0}^{k-1}\beta_{j}n^{1-j/3}+\LandauO(n^{1-k/3}),\]
for any positive integer $k$, with the sequence $\beta_{0},\beta_{1},\ldots$ beginning with the terms $4,0,4a_{1},4$. This is equivalent to the existence of a sequence $\delta_{0},\delta_{1},\ldots$ such that $r_{n}$ behaves like
\[r_{n}=n!4^{n} \exp(3a_{1} n^{1/3})n\left(\sum_{j=0}^{k-1}\delta_{j}n^{-j/3}+\LandauO(n^{-k/3})\right),\]
for any positive integer $k$. In this equation, $\delta_{0}=\gamma_{r}$ is the constant term that we aim to approximate. A simple way to approximate $\gamma_{r}$ is to write
\[u_{n}=\frac{r_{n}}{n!4^{n} \exp(3a_{1} n^{1/3})n}.\] Then the graph of the values of $u_{n}$ plotted against $10n^{-1/3}$ (because $n$ is close to $1000$) should be roughly linear (see Figure~\ref{fig:relaxed_constant_approx_bad}), and the point where it crosses the $y$-axis can be taken as an approximation for $\gamma_{r}$. This yields $\gamma_{r}\approx160$. We get a more precise estimate as follows:
Fix $k$ to be some positive integer. Then, for each $n$, consider the integers $m\in[n,n+k)$. For each such $m$ we expect the equation
\[u_{m}\approx\sum_{j=0}^{k-1}\delta_{j}m^{-j/3}\]
to be approximately true. We then solve this system of equations for $\delta_{0},\ldots,\delta_{k-1}$ as though the equations were exact, using known, exact values of $u_{m}$. This yields approximations for 
$\delta_{0},\ldots,\delta_{k-1}$. Denote the approximation thus obtained for $\delta_{0} = \gamma_{r}$ by $v_{n}$. Note that this is equivalent to writing $v_{n}$ as a weighted sum of the numbers $u_{m}$, which cancels the terms $n^{-j/3}$ for $1\leq j<k$. For example, if $k=2$ then $v_{n}=((n+1)^{1/3}u_{n}-n^{1/3}u_{n+1})/((n+1)^{1/3}-n^{1/3})$. Hence, if our assumptions are correct then $v_{n}=\gamma_{r}+\LandauO(n^{-k/3})$. Taking $k=18$ and plotting $v_{n}$ against $10^{18}n^{-6}$ (because $n$ is close to $1000$) as in Figure \ref{fig:relaxed_constant_approx_good} yields the approximation $\gamma_{r}\approx166.95208957$, where we expect the quoted digits to be correct. In Figures \ref{fig:compacted_constant_approx_bad} and \ref{fig:compacted_constant_approx_good} we show a similar analysis of the counting sequence for compacted trees, yielding the approximation $\gamma_{c}\approx173.12670485$.

%%%%%%%%%%%%%%%%%%%%%%%%%%%%%%%%%%%%%%%%%%%%%%%%%%%%%%%%%%%%
\section{Proof of stretched exponential for relaxed trees}
\label{sec:induction}
In this section we prove upper and lower bounds for the number of relaxed trees. These bounds differ only in the constant term, so they completely determine both the stretched exponential factor and the polynomial factor in the asymptotic number of relaxed trees for large $n$. 

Recall from Corollary \ref{coro:meander-in-dyck} that the number of relaxed trees $r_{n}$ of size $n$ is given by $r_{n}=n!\rs_{2n,0}$, where the terms $\rs_{n,m}$ are given by the recurrence relation~\eqref{eq:relaxedrecsimp} which we repeat here for the convenience of the reader:
\begin{align*}
	%\label{eq:relaxedrecsimp}
	\left\{
		\begin{array}{rlrl}
		\rs_{n,m} &= \frac{n-m+2}{n+m} \rs_{n-1,m-1}+ \rs_{n-1,m+1}, & \text{ for } &n> 0, m\geq0,\\
		\rs_{0,m} &= 0, & \text{ for } & m>0, \\
		\rs_{n,-1} &= 0, & \text{ for } & n \geq 0, \\
		\rs_{0,0} &= 1.&&
		\end{array}
	\right.
\end{align*}

Our proofs of the upper and lower bounds for relaxed trees come from more general bounds for the numbers $\rs_{n,m}$, which we prove by induction. Suppose that $(X_{n,m})_{n\geq m\geq0}$ and $(s_{n})_{n\geq1}$ are sequences of non-negative real numbers satisfying
\begin{equation}\label{Xequation}
  X_{n,m}s_{n}\leq \frac{n-m+2}{n+m} X_{n-1,m-1}+ X_{n-1,m+1},
\end{equation}
for all sufficiently large $n$ and all integers $m\in[0,n]$. We define the sequence $(h_{n})_{n\geq0}$ by $h_{0}=1$ and $h_{n}=s_{n}h_{n-1}$. By induction on $n$, for some constant $b_0$, the following inequality
%\[X_{n,m}h_{n}\leq b_0\cdot r_{n,m}\]
holds for all sufficiently large $n$ and all $m\geq0$:
\begin{align}
	X_{n,m}h_n &\stackrel{\text{\eqref{Xequation}}}{\leq} \frac{n-m+2}{n+m} X_{n-1,m-1} h_{n-1} + X_{n-1,m+1}h_{n-1} \notag \\
	           &\stackrel{\text{(IS)}}{\leq} \frac{n-m+2}{n+m} b_0 \rs_{n-1,m-1} + b_0 \rs_{n-1,m+1} \label{eq:inductionXH} \\
						 &\stackrel{\text{\eqref{eq:relaxedrecsimp}}}{=} b_0 \rs_{n,m}. \notag
\end{align}
Here (IS) marks the ``Induction Step''.
Similarly, if we can show the opposite of \eqref{Xequation}, it will imply that 
\[X_{n,m}h_{n}\geq b_1\cdot \rs_{n,m},\]
for all sufficiently large $n$ and all integers $m\in[0,n]$.

Comparing to the heuristic analysis in Section~\ref{sec:heuristic-analytic}, we see that $X_{n,m}$ acts as the function $f(\kappa)$, and $s_n$ as $s(n)$. Therefore, we should expect $X_{n,m}$ to be close to \eqref{eq:analytic-approx}, and $s_n$ to be a slight deviation of \eqref{eq:ratio-sn}.

In Lemma \ref{lem:AiryXLower} we will prove that certain explicit sequences $\tilde{X}_{n,m}$ and $\tilde{s}_{n}$ satisfy \eqref{Xequation}, which will lead to a lower bound on the numbers $\rs_{n,m}$. Similarly, in Lemma \ref{lem:AiryXUpper} we will show that other explicit sequences $\hat{X}_{n,m}$ and $\hat{s}_{n}$ satisfy the opposite of \eqref{Xequation}, which therefore yields an upper bound on the numbers $\rs_{n,m}$. Together, these two bounds determine the exact asymptotic form of the numbers $\rs_{2n,0}$ up to the constant term.

In order to prove these bounds with the explicit expressions of $X_{n,m}$ and $s_n$, we will consider the difference between the right- and the left-hand side of~\eqref{Xequation}. 
Then we will show that this difference is non-negative. 
We start by expanding the involved Airy function and its derivative in the neighborhood of an appropriate point $\alpha$, leading to a sum of the form
\[
  p_{n,m}\Ai(\alpha) + p'_{n,m}\Ai'(\alpha),
\]
where $p_{n,m}$ and $p'_{n,m}$ can be expressed as Puiseux series in $n$ whose coefficients are fractional polynomials in $m$. By looking at the ``Newton polygon'' of these Puiseux series, we can pick out the dominant term at different regimes of $n$ and $m$, leading to a proof of \eqref{Xequation} (or the reverse direction).

\begin{figure}[h]
	\centering
	\includegraphics[width=0.3\textwidth]{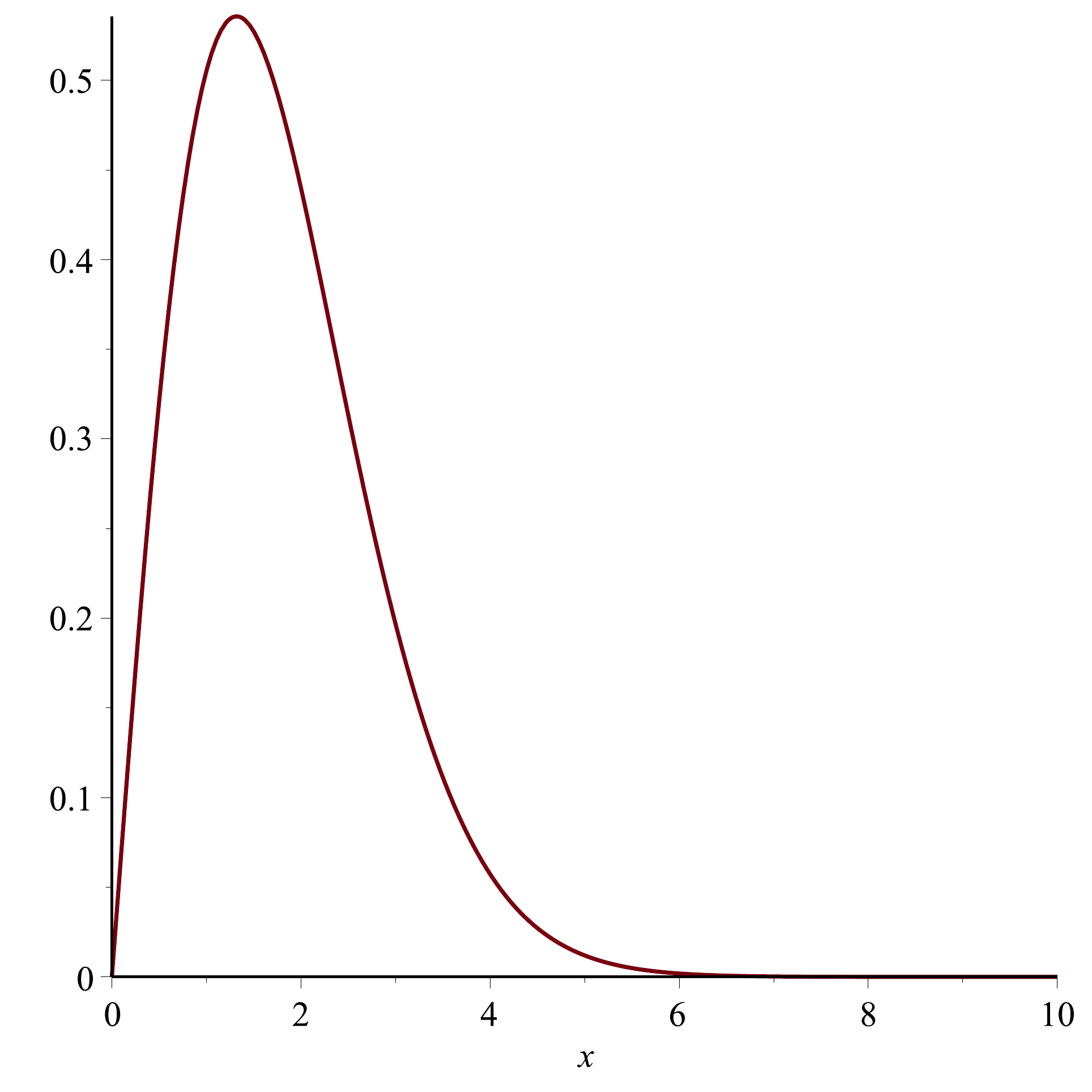}%
	\qquad
	\includegraphics[width=0.3\textwidth]{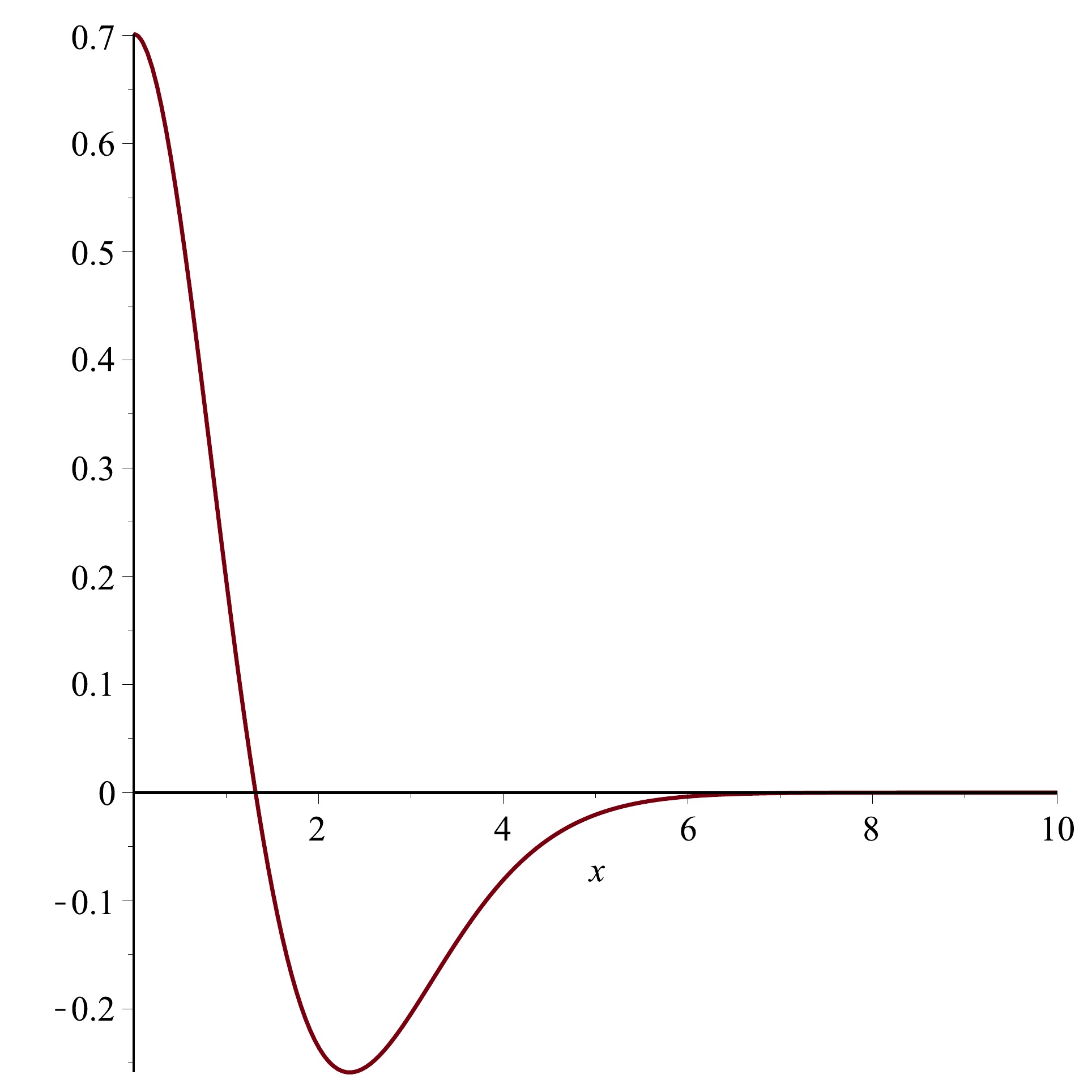}%
	\qquad 
	\includegraphics[width=0.3\textwidth]{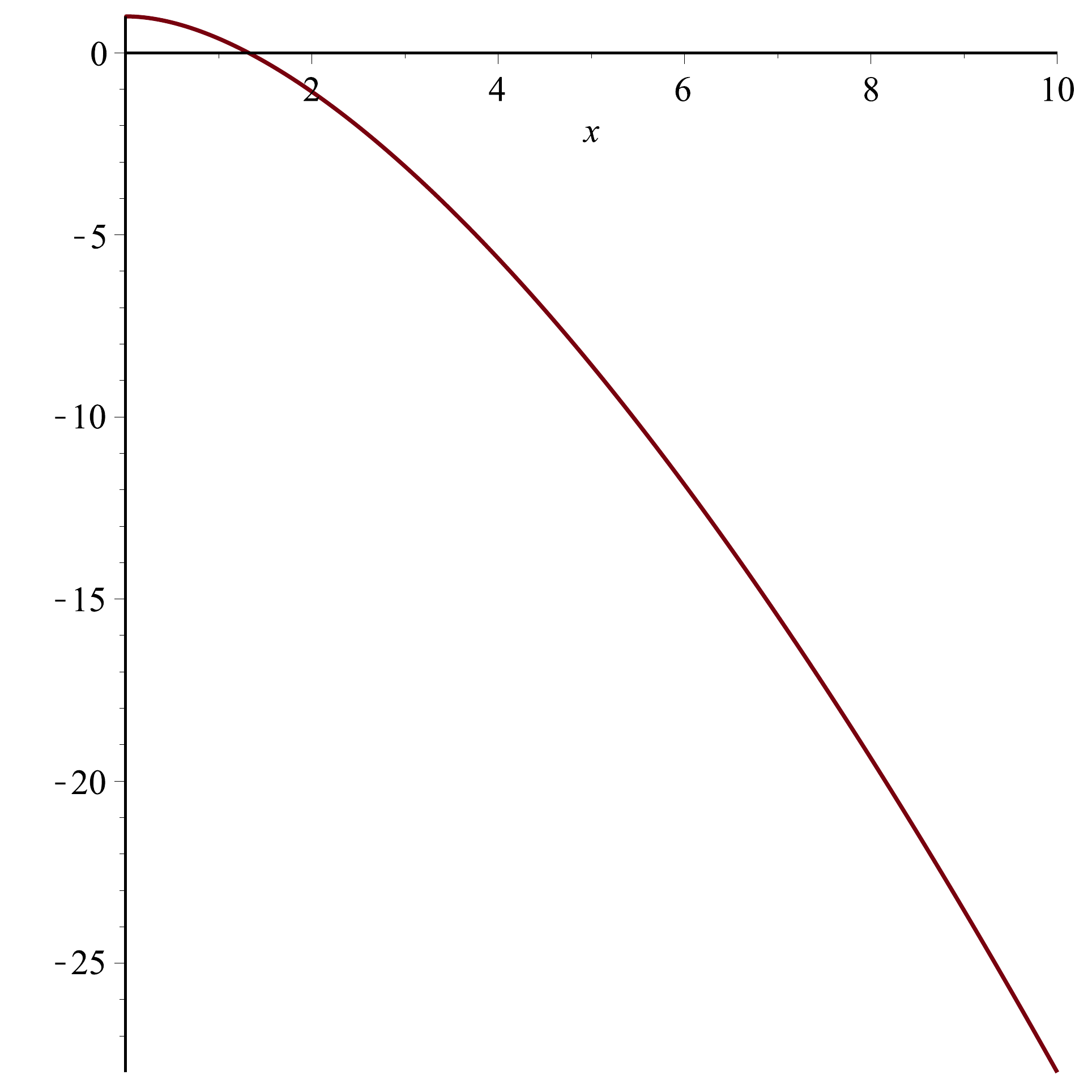}%
	\caption{(Left) The Airy function $\Ai(a_1+x)$, (Centre) its derivative $\Ai'(a_1+x)$, and (Right) the quotient $\Phi(x) = x\frac{\Ai'(a_1+x)}{\Ai(a_1+x)}$ on the positive real line.}
	\label{fig:AiryAi}
\end{figure}

The following Lemma summarizes some elementary results on the relation between the Airy function $\Ai$ and its derivative $\Ai'$. We will use these results in Lemmas \ref{lem:AiryXLower} and \ref{lem:AiryXUpper} to bound the subsequently defined auxiliary sequence $\tilde{X}_{n,m}$. 

\begin{lemma}
	\label{lem:PsiPhi}
	The functions
	\begin{align*}
		&&
		\Phi(x) &= x \frac{\Ai'(a_1+x)}{\Ai(a_1+x)}
		&& \text{and} &
		\Psi(x) &= \frac{\Ai'(a_1+x)}{\Ai(a_1+x)}
		&&
	\end{align*}
	are infinitely differentiable and monotonically decreasing on $x>0$ with $\Phi(0)=1$.
\end{lemma}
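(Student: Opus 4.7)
The plan is to exploit the Airy ODE $y''(x) = (a_1 + x) y(x)$ satisfied by $y(x) := \Ai(a_1 + x)$. Since $a_1$ is the largest zero of $\Ai$ and all Airy zeros are simple (by uniqueness of ODE solutions, $\Ai$ and $\Ai'$ cannot vanish simultaneously), $y$ is strictly positive on $(0,\infty)$ with a simple zero at $x=0$; hence both $\Psi = y'/y$ and $\Phi = x\Psi$ are $C^\infty$ on $(0,\infty)$. The identity $\Phi(0^+) = 1$ follows from the Taylor expansion $\Ai(a_1 + x) = \Ai'(a_1)\, x + O(x^3)$ (the $x^2$-coefficient vanishes because $\Ai''(a_1) = a_1 \Ai(a_1) = 0$) together with $\Ai'(a_1+x) = \Ai'(a_1) + O(x^2)$.

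For monotonicity of $\Psi$, I would use the Riccati equation $\Psi' = (a_1+x) - \Psi^2$ obtained by dividing $y'' = (a_1+x)y$ by $y$. Differentiating gives $\Psi'' = 1 - 2\Psi\Psi'$, so at any critical point $x_0$ one has $\Psi''(x_0) = 1 > 0$: every critical point is a strict local minimum. Combined with the boundary behavior $\Psi(0^+) = +\infty$ (from the simple zero of $y$) and $\Psi(x) \sim -\sqrt{a_1+x} \to -\infty$ as $x \to \infty$ (Liouville--Green asymptotic of $\Ai$), this forbids any critical point: otherwise $\Psi$ would have to decrease to $-\infty$ after its first local minimum, forcing a second critical point that is necessarily a local maximum, contradicting $\Psi'' > 0$ at every critical point. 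Hence $\Psi$ is strictly decreasing.

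For $\Phi$, combining $\Phi = x\Psi$ with the Riccati equation gives the ODE
\[
x \Phi'(x) = \Phi(x)(1 - \Phi(x)) + x^2(a_1 + x),
\]
from which one obtains $\Phi(x) = 1 + \tfrac{a_1}{3} x^2 + O(x^3)$, so $\Phi'(x) < 0$ just past $0$. Differentiating the ODE and specializing to a critical point $x_c > 0$ of $\Phi$, where it forces $\Phi(x_c)(1-\Phi(x_c)) = -x_c^2(a_1+x_c)$, one computes $\Phi''(x_c) = 2a_1 + 3 x_c$. Suppose for contradiction that $\Phi'$ is not $<0$ throughout $(0,\infty)$, and let $x_c := \inf\{x > 0 : \Phi'(x) \geq 0\}$. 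Then $\Phi'(x_c) = 0$ and $\Phi' < 0$ on $(0, x_c)$, so $\Phi'$ reaches $0$ from below, forcing $\Phi''(x_c) \geq 0$ and hence $x_c \geq -2a_1/3$. Equality is excluded because it would force $\Phi(x_c)(1-\Phi(x_c)) = -4a_1^3/27 > 1/4$, impossible for real $\Phi$ (using $|a_1| > 2$, so $4|a_1|^3/27 > 32/27 > 1/4$). Thus $\Phi''(x_c) > 0$ strictly, so $\Phi'$ crosses $0$ transversally and becomes positive on a right-neighborhood of $x_c$; since $\Phi \to -\infty$, a first subsequent point $x_d > x_c$ must exist at which $\Phi'(x_d) = 0$ approached from above, giving $\Phi''(x_d) \leq 0$, hence $x_d \leq -2a_1/3 < x_c$, contradicting $x_d > x_c$. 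The main obstacle in this proof is precisely this last step: in the region where $\Psi > 0$, the decomposition $\Phi' = \Psi + x\Psi'$ has terms of opposite sign, so monotonicity of $\Phi$ cannot be read off directly and must be extracted from the ODE via the critical-point analysis together with the numerical bound on $|a_1|$.
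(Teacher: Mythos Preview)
Your proof is correct and follows essentially the same approach as the paper: both compute $\Psi''=1-2\Psi\Psi'$ and (at critical points) $\Phi''=2a_1+3x$, then use the boundary behaviour at $0$ and at $\infty$ to derive a contradiction from the existence of critical points. The only notable difference is organisational: the paper assumes directly that $\Phi'(x_+)>0$ for some $x_+$, which immediately yields two critical points $y_1<x_+<y_2$ with $\Phi''(y_1)\geq 0\geq\Phi''(y_2)$ and hence $y_1\geq -2a_1/3\geq y_2$, a contradiction; this sidesteps your separate treatment of the boundary case $\Phi''(x_c)=0$ via the numerical bound $|a_1|>2$. Your extra step is correct and in fact buys you strict monotonicity, whereas the paper's argument as written only excludes $\Phi'>0$ (i.e.\ gives weak monotonicity), which is all the lemma statement requires.
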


\begin{proof}
	First, by l'Hospital's rule it is easy to see that $\Phi(0)=1$. 
	Second, as $a_1$ is the largest root of $\Ai(x)$, the functions $\Phi(x)$ and $\Psi(x)$ are infinitely differentiable as compositions of differentiable functions.
	It remains to prove the monotonicity; see Figure~\ref{fig:AiryAi}. 
	A local expansion at $x=0$ shows that the functions are initially decreasing.
	The same holds for large $x$ due to the approximation $\Ai(x) \sim \frac{\exp\left(-\frac{2}{3}x^{3/2}\right)}{2\sqrt{\pi} x^{1/4}}$, 
	see~\cite[Equation~10.5.49]{AbramowitzStegun1964}, giving
	\begin{align}
		\label{eq:Psilarge}
		\Psi(x) \sim - \sqrt{a_1+x},
	\end{align}
	for $x \to \infty$. 
	We will show that $\Phi'(x)$ and $\Psi'(x)$ are always negative for $x>0$.
	Note that $\Phi(x)$ and $\Psi(x)$ will change sign only once at $x_0\approx0.91$.
	
	We present the following argument for the monotonicity of $\Phi(x)$. 
	Assume that there exists an $x_+$ such that $\Phi'(x_+)>0$. 
	Then, as $\Phi(x)$ is initially and finally decreasing, there must exist $y_1 < x_+ < y_2$ such that $\Phi'(y_1)=\Phi'(y_2)=0$ and $\Phi''(y_1) \geq 0 \geq \Phi''(y_2)$.	
	
	The second derivatives are equal to
	\[
	\Phi''(x) = 2a_1+3x - \frac{2}{x}\Phi(x)\Phi'(x).
	\]
	These lead to $2a_1 + 3 y_1 \geq 0 \geq 2a_1 + 3y_2$, thus also the contradiction $y_1 \geq y_2$.
	The argument for the monotonicity of $\Psi(x)$ is analogous, except that the second derivative is now
	\[
	\Psi''(x) = 1-2\Psi(x)\Psi'(x),
	\]
	leading to the contradiction $\Psi''(y_1)=\Psi''(y_2)=1$.
\end{proof}

Later we will use the value $x_0$ which is the unique root of $\Phi(x)$ and $\Psi(x)$ to determine the dominant term in the expansion of our series in $\Ai(x)$ and $\Ai'(x)$.

\subsection{Lower bound}
\label{sec:lowerbound}

\begin{lemma}
	\label{lem:AiryXLower}
	For all $n,m\geq0$ let
	\begin{align*}	
		\tilde{X}_{n,m} &:= \left(1-\frac{2m^2}{3n} + \frac{m}{2n}\right)\Ai\left(a_{1}+\frac{2^{1/3}(m+1)}{n^{1/3}}\right)~~~~~~~~\text{and}\\
		 \tilde{s}_n &:= 2+\frac{2^{2/3}a_1}{n^{2/3}}+\frac{8}{3n} - \frac{1}{n^{7/6}}.
	\end{align*}
	Then, for any $\varepsilon>0$, there exists an $\tilde{n}_0$ such that 
	\begin{align}
		\label{Xequation2}
		\tilde{X}_{n,m}\tilde{s}_{n} \leq \frac{n-m+2}{n+m} \tilde{X}_{n-1,m-1} + \tilde{X}_{n-1,m+1},
	\end{align}
	for all $n\geq \tilde{n}_0$ and for all $0 \leq m < n^{2/3-\varepsilon}$.
\end{lemma}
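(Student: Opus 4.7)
The plan is to let $D_{n,m}$ denote the difference between the right- and left-hand sides of \eqref{Xequation2} and to prove $D_{n,m} \ge 0$ for all admissible $(n,m)$ by the Newton-polygon strategy described after the statement of the lemma. The starting point is to expand every Airy evaluation appearing in $D_{n,m}$ around the common base point $\alpha := a_1 + 2^{1/3}(m+1)/n^{1/3}$ (the argument occurring in $\tilde X_{n,m}$), and to reduce every derivative $\Ai^{(k)}(\alpha)$ iteratively via the Airy equation $\Ai''(y) = y\Ai(y)$ to a polynomial-in-$\alpha$ combination of $\Ai(\alpha)$ and $\Ai'(\alpha)$.

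Concretely, I would first compute the Puiseux expansions in $n^{-1/3}$ of
\begin{align*}
\delta_\pm := 2^{1/3}\left(\frac{m+1 \pm 1}{(n-1)^{1/3}} - \frac{m+1}{n^{1/3}}\right),
\end{align*}
Taylor expand $\Ai(\alpha + \delta_\pm)$ to sufficiently high order, and likewise expand the rational factor $(n-m+2)/(n+m)$ and the polynomial prefactors $1 - 2(m\pm 1)^2/(3(n-1)) + (m\pm 1)/(2(n-1))$ of $\tilde X_{n-1,m\pm 1}$. After collecting terms, this yields
\begin{align*}
D_{n,m} = P_{n,m}\,\Ai(\alpha) + Q_{n,m}\,\Ai'(\alpha) + R_{n,m},
\end{align*}
where $P_{n,m}$ and $Q_{n,m}$ are truncated Puiseux series in $n^{-1/3}$ with polynomial-in-$m$ coefficients, and $R_{n,m}$ is a Taylor-remainder term that can be dominated by a small multiple of $n^{-7/6}\Ai(\alpha)$ uniformly in $0 \le m < n^{2/3-\varepsilon}$, using a crude Lagrange bound on a compact $\Ai$-window.

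The heuristic analysis of Section~\ref{sec:heuristic-analytic} is precisely what motivated the coefficients $1 - 2m^2/(3n) + m/(2n)$ in $\tilde X_{n,m}$ and $2 + 2^{2/3}a_1/n^{2/3} + 8/(3n)$ in $\tilde s_n$, and I expect these choices to force the leading Puiseux contributions to $P_{n,m}\Ai(\alpha) + Q_{n,m}\Ai'(\alpha)$ (of orders $1$, $n^{-2/3}$ and $n^{-1}$) to cancel identically in $m$. The subtraction of $1/n^{7/6}$ in $\tilde s_n$ then adds the strictly positive slack $\tilde X_{n,m}/n^{7/6}$ to the right-hand side of \eqref{Xequation2}, which must dominate every remaining tail term once $n$ exceeds an $\varepsilon$-dependent threshold $\tilde n_0$.

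The main obstacle is verifying the sign of $P_{n,m}\Ai(\alpha) + Q_{n,m}\Ai'(\alpha)$ uniformly across the full range $0 \le m < n^{2/3-\varepsilon}$, where the Newton polygon in the variables $n^{-1/3}$ and $m$ has multiple lower faces and the dominant monomial changes character: for $m$ small compared to $n^{1/3}$, $\Ai(\alpha)$ is small (vanishing as $\alpha \to a_1$) and the $Q_{n,m}\Ai'(\alpha)$ contribution dominates; for $m$ close to $n^{2/3-\varepsilon}$, the quotient $\Ai'(\alpha)/\Ai(\alpha) = \Psi(2^{1/3}(m+1)/n^{1/3})$ becomes large and negative according to \eqref{eq:Psilarge}. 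To control both extremes and the crossover I would divide by $\Ai(\alpha) > 0$ (respectively by $\Ai'(\alpha) > 0$ in a neighborhood of $m = 0$) and invoke the monotonicity and asymptotics of $\Phi$ and $\Psi$ from Lemma~\ref{lem:PsiPhi} --- in particular $\Phi(0) = 1$ together with $\Phi$ decreasing --- to show that the leading non-cancelling coefficient has the right sign in every regime. This regime-by-regime sign control, uniform in the two-parameter range, is the delicate part of the argument and the place where the adapted Newton-polygon machinery alluded to in the introduction does its work.
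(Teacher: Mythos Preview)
Your proposal is correct and follows essentially the same Newton-polygon strategy as the paper. The only notable differences are cosmetic: the paper expands around $\alpha = a_1 + 2^{1/3}m/n^{1/3}$ (rather than your $m+1$), treats the resulting Puiseux series as absolutely convergent rather than carrying a separate remainder $R_{n,m}$, and makes the regime split you anticipate explicit by cutting at $m = x_0(n/2)^{1/3}$ and $m = n^{7/18}$, with the key $\Ai'(\alpha)$ monomial $-2^{13/3}m^3/(9n^{7/3})$ (negative and handled via $\Phi\le 1$ in the first zone, positive in the other two since $\Ai'(\alpha)<0$ there) doing the work that your final paragraph describes.
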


	%%%% local defs only for the following 2 proofs %%%%
	\newcommand*{\aiarg}{\alpha}
	\newcommand{\lrvgen}{Z}
	%%%%%%%%%%%%%%%%%%%%%%%%%%%%%%%%%%%%%%%

\begin{proof}
	First, define the following sequence
	\begin{align*}
		P_{n,m} := -\lrvgen_{n,m}{s}_{n} + \frac{n-m+2}{n+m} \lrvgen_{n-1,m-1} + \lrvgen_{n-1,m+1},
	\end{align*}
	where
    \begin{align*}
          s_n &:= \sigma_0 + \frac{\sigma_1}{n^{1/3}} + \frac{\sigma_2}{n^{2/3}}+\frac{\sigma_3}{n} + \frac{\sigma_4}{n^{7/6}}, \\
           \lrvgen_{n,m} &:= \left(1+\frac{\tau_2 m^2+\tau_1 m}{n}\right)\Ai\left(a_{1}+\frac{2^{1/3}(m+1)}{n^{1/3}}\right),
    \end{align*}
    with $\sigma_i, \tau_j \in\mathbb{R}$.
	Then the inequality~\eqref{Xequation2} is equivalent to $P_{n,m} \geq 0$ with $\sigma_0=2$, $\sigma_1=0$, $\sigma_2=2^{2/3}a_1$, $\sigma_3=8/3$, and $\sigma_4=-1$ as well as $\tau_0=0$, $\tau_1=1/2$, and $\tau_2=-2/3$.
	%
	%The strategy to show this inequality consists in solving it for large $n$ and all $m$.
	%
	Next, we expand $\Ai(z)$ in a neighborhood of 
	\begin{align}
		\label{eq:Pnmexpansionlow}
		\aiarg = a_{1}+\frac{2^{1/3} m}{n^{1/3}},
	\end{align}
	and we get the following expansion
	\begin{align*}
		P_{n,m} &=  p_{n,m} \Ai(\aiarg) + p'_{n,m} \Ai'(\aiarg),
	\end{align*}
	where $p_{n,m}$ and $p'_{n,m}$ are functions of $m$ and $n^{-1}$ and may be expanded as power series in $n^{-1/6}$ with coefficients polynomial in $m$.
	As long as $n>1$ and $n>m$, this series converges absolutely because the Airy function is entire and so all functions expanded are analytic in the region defined by $|n|>1$ and $|n|>|m|$.

	As a first step we compute the possible range of the powers in $m$ and $n$.
	We will start by showing that $[m^i n^{j}]P_{n,m} = 0$ for $i+j > 1$, $i,j \in \Q$. The expansions of the three involved Airy functions only give terms of the form $\LandauO(m^jn^{-j}(n^{-1/3})^k)\Ai^{(k)}(\alpha)$, with $j,k\geq0$. Due to the differential equation $\Ai''(\alpha)=\alpha\Ai(\alpha)$, the term $\Ai^{(k)}(\alpha)$ takes the form $\LandauO(\alpha^{\lfloor k/2 \rfloor })\Ai(\alpha)+\LandauO(\alpha^{\lfloor (k-1)/2 \rfloor })\Ai'(\alpha)$. Hence, all terms in the expansion of the Airy function are of the form $\LandauO(m^j n^{-j})\Ai(\alpha)$ or $\LandauO(m^j n^{-j-1/3})\Ai'(\alpha)$ for some $j\geq0$. Due to the factor $m^2 n^{-1}$ in the definition of $\tilde{X}_{n,m}$, this implies that $[m^i n^{j}]P_{n,m} = 0$ for $i+j > 1$. Additionally, it also implies that the coefficients of $\Ai'(\alpha)$ are equal to $0$ for $i+j > 2/3$.
	
	Next, we strengthen this result by choosing suitable values $\sigma_i$ for $0 \leq i \leq 4$ in the definition of $s_n$ in order to eliminate more initial coefficients. 
	Then, we will show that the remaining terms satisfy $P_{n,m} \geq 0$.
	We performed this tedious task in Maple
	and we refer to the accompanying worksheet~\cite{Wallner2019web} for more details.
	The results are summarized in Figure~\ref{fig:PosP1} where the initial non-zero coefficients are shown.
	A diamond at $(i,j)$ is drawn if and only if the coefficient $[m^in^j]P_{n,m}$ is non-zero. 
	It is an empty diamond if the given choice of $\sigma_i$ and $\tau_j$ makes it disappear, whereas it is a solid diamond if it remains non-zero.
	The convex hull is formed by the following three lines 
	\begin{align*}
		L_1 &: j = -\frac{7}{6} - \frac{7i}{18}, \\
		L_2 &: j = -\frac{1}{3} - \frac{2i}{3}, \\
		L_3 &: j = 1 - i.
	\end{align*}
		
	\begin{figure}[ht]
		\centering
		\includegraphics[width=0.48\textwidth]{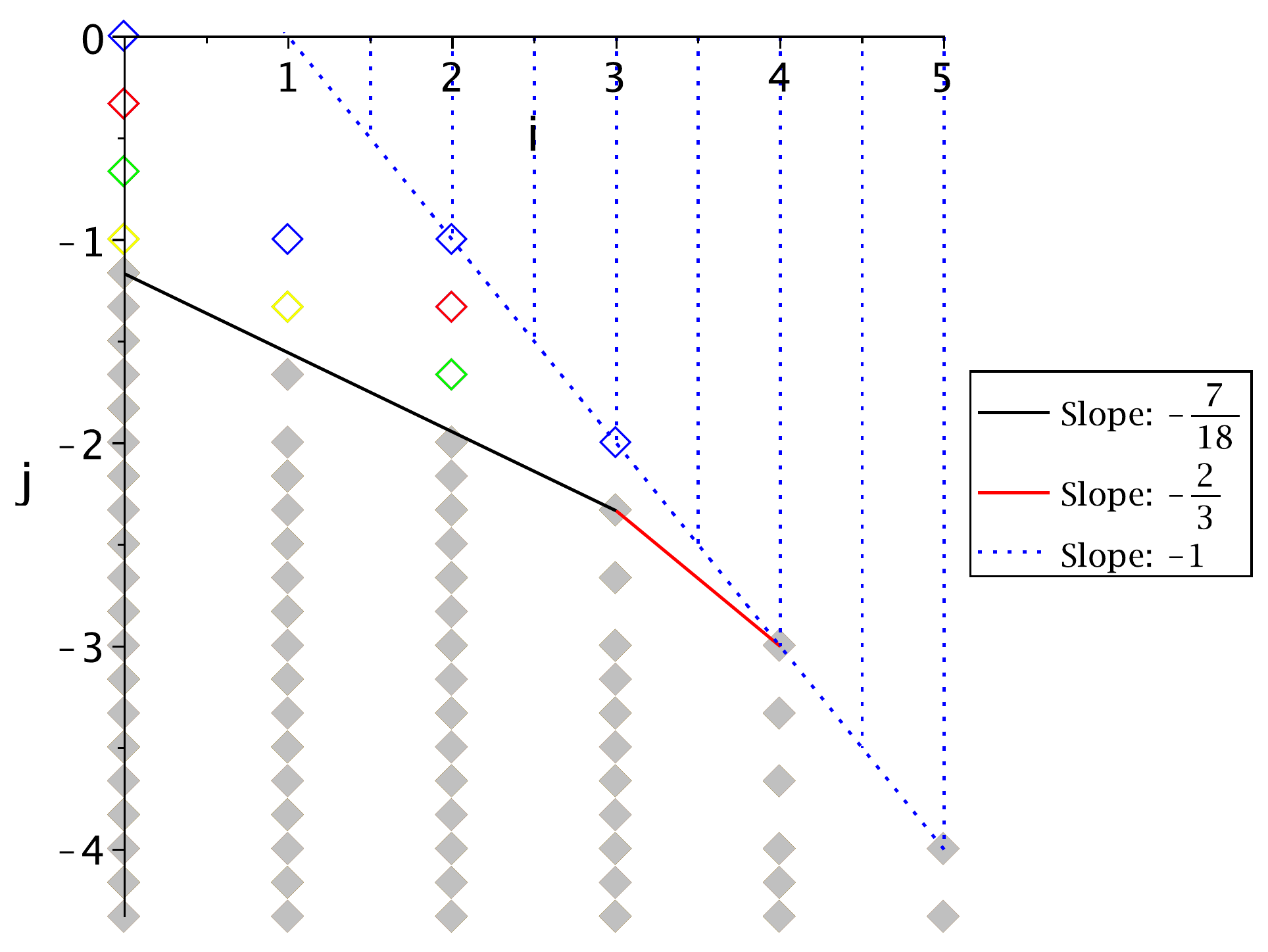}%
		\quad
		\includegraphics[width=0.48\textwidth]{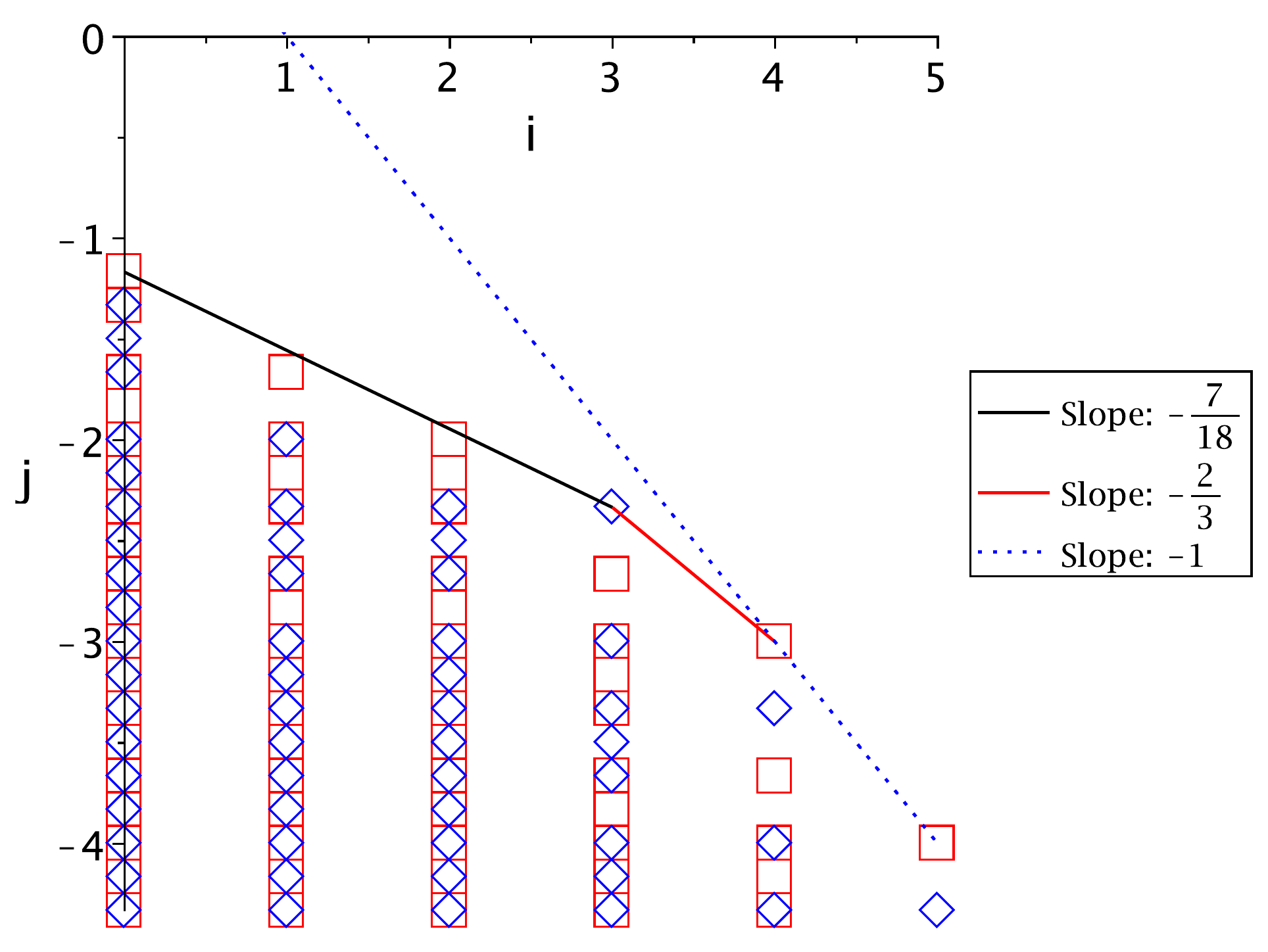}%
		\caption{(Left) Non-zero coefficients of $P_{n,m} = \sum {a_{i,j}} m^i n^j$ shown by diamonds for $s_n := \sigma_0 + \frac{\sigma_1}{n^{1/3}} + \frac{\sigma_2}{n^{2/3}}+\frac{\sigma_3}{n} + \frac{\sigma_4}{n^{7/6}}$ and $\lrvgen_{n,m} := \left(1+\frac{\tau_2 m^2+\tau_1 m}{n}\right)\Ai\left(a_{1}+\frac{2^{1/3}(m+1)}{n^{1/3}}\right)$. There are no terms in the blue dashed area. The blue terms vanish for $\sigma_0=2$, the red terms vanish for $\sigma_1=0$, the green terms vanish for $\sigma_2=2^{2/3}a_1$, and the yellow terms vanish for $\sigma_3=8/3$ and $\tau_2=-2/3$. The black and red lines represent the two parts $L_1$ and $L_2$, respectively, of the convex hull. (Right) The solid gray diamonds are decomposed into the coefficients $p_{n,m}$ of $\Ai(\alpha)$ (red boxes) and $p'_{n,m}$ of $\Ai'(\alpha)$ (blue diamonds).}
		\label{fig:PosP1}
	\end{figure}	
	
	Next, we distinguish between the contributions arising from $p_{n,m}$ and $p'_{n,m}$. 
	The expansions for $n$ tending to infinity start as follows, where the elements on the convex hull are written in color:
	\begin{align*}
		P_{n,m} =&~\Ai(\alpha) \left(
			\textcolor{red}{-\frac{\sigma_4}{n^{7/6}}} 
			- \frac{2^{5/3} a_1 m}{3 n^{5/3}} 
			\textcolor{red}{- \frac{41 m^2}{9n^2}}
			- \frac{2^{8/3} a_1 m^3}{3 n^{8/3}}
			\textcolor{red}{- \frac{34 m^4}{9n^3} - \frac{62 m^5}{135 n^4}} 
			+  \ldots
					\right) + \\
			    &~\Ai'(\alpha) \left(
			\textcolor{blue}{\frac{2^{1/3}(2\tau_1-1)}{n^{4/3}}}
			+ 
			\frac{2^{1/3}}{n^{3/2}}
			- \frac{8 a_1 m}{9n^2} 
			+ \frac{2^{1/3} (24 \tau_1 - 31) m^2}{9 n^{7/3}}
			\textcolor{blue}{- \frac{2^{13/3} m^3} {9n^{7/3}}} \right.\\
			&\qquad \qquad \left. \textcolor{blue}{-5 \frac{2^{5/3}m^4}{9n^{10/3}} -89 \frac{2^{4/3} m^5}{135 n^{13/3}}} 
			+ \ldots
			\right).
	\end{align*}
	We now choose $\sigma_4=-1$ which leads to a positive term $\Ai(\alpha)n^{-7/6}$ 
	and set $\tau_1=1/2$ to eliminate the term of order $n^{-4/3}$ from the convex hull (it is replaced by $\frac{2^{1/3}}{n^{3/2}}$).
	Then, the non-zero coefficients are shown in Figure~\ref{fig:PosP2}.
	Next, for fixed (large) $n$ we prove that for all $m$ the dominant contributions in $P_{n,m}$ are positive. 
	Therefore, we consider three different regimes. 	
	Let $x_0$ be the unique positive root of $\Psi(x)$ from Lemma~\ref{lem:PsiPhi}. 
	
	\begin{figure}[t]
		\centering
		\includegraphics[width=0.48\textwidth]{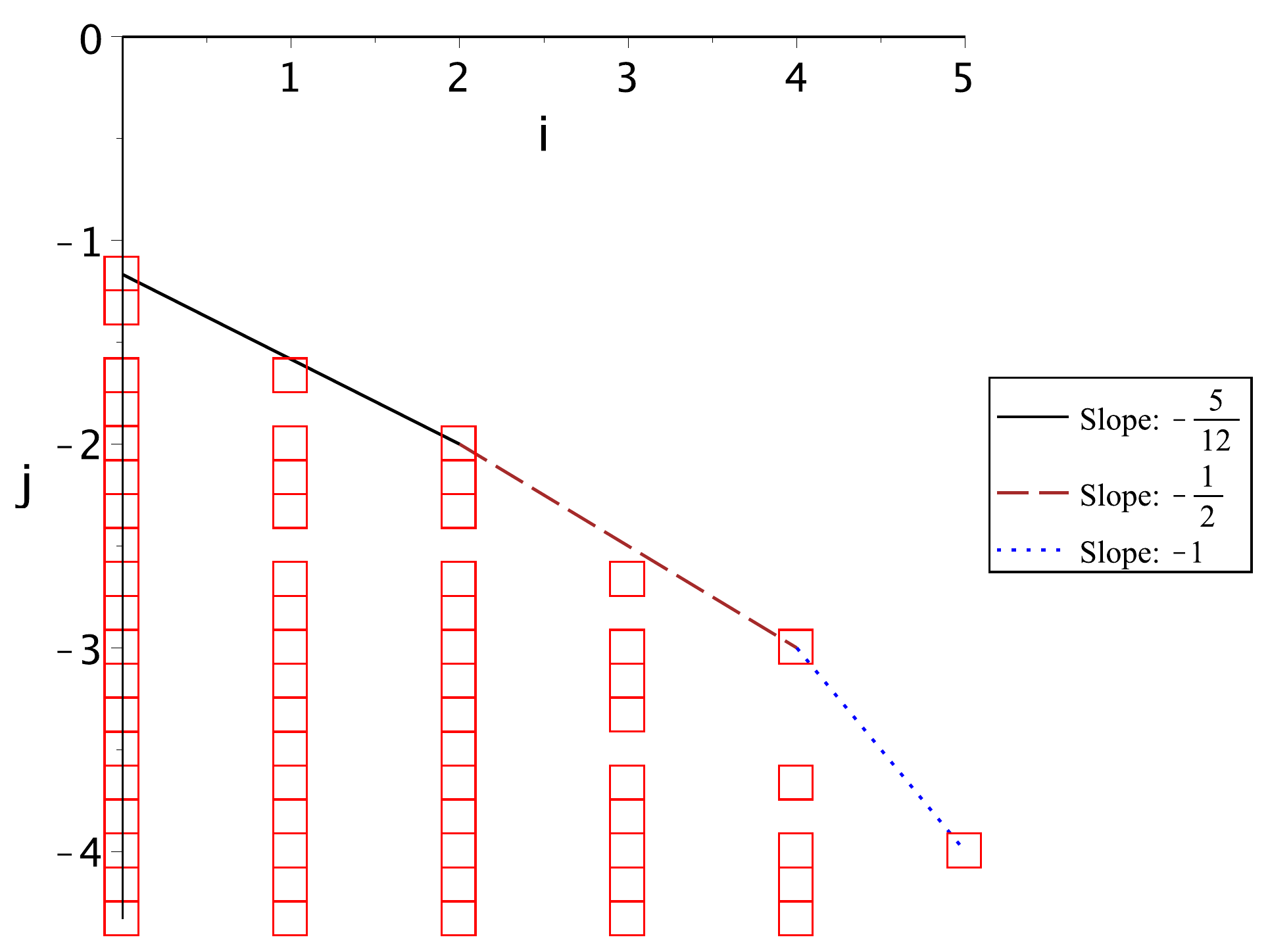}%
		\quad
		\includegraphics[width=0.48\textwidth]{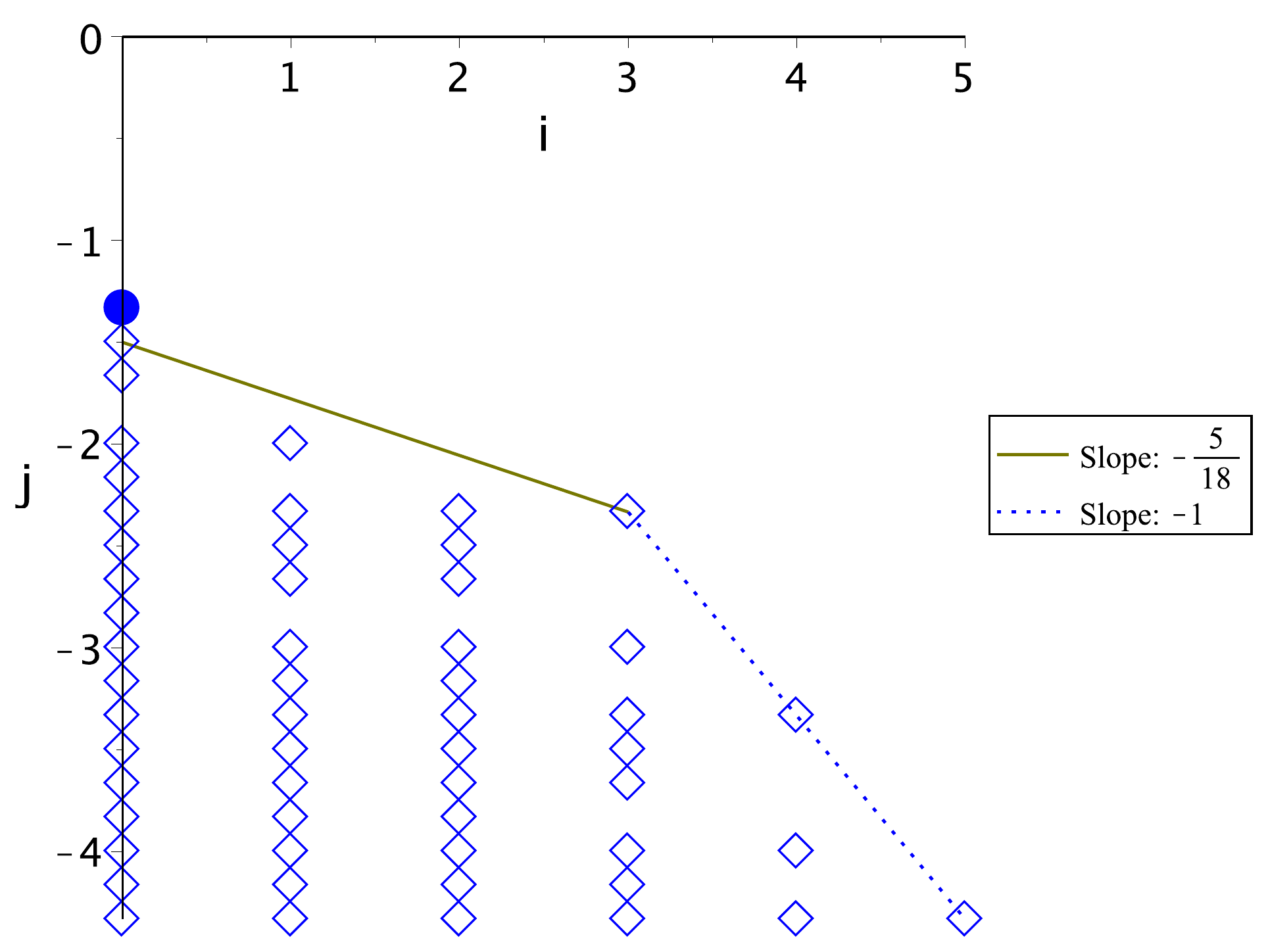}%
		\caption{Non-zero coefficients $p_{n,m} = \sum \tilde{a}_{i,j} m^i n^j$ (red) and $p'_{n,m} = \sum \tilde{a}_{i,j}' m^i n^j$ (blue) of the expansion~\eqref{eq:Pnmexpansionlow} for $P_{n,m}$. The coefficient of $n^{-4/3}$ in the right picture depicted as a solid blue circle disappears for $\tau_1=1/2$.}
		\label{fig:PosP2}
	\end{figure}
	
	\begin{enumerate}
		\item Consider the range of small values of $m$ given by $m \leq x_0 (n/2)^{1/3}$.
	In this range $\Ai(\alpha)$ and $\Ai'(\alpha)$ are both positive. 
	Moreover, the (red) coefficients of $\Ai(\alpha)$ are dominated by $n^{-7/6}$ for large $n$, while the (blue) coefficients of $\Ai'(\alpha)$ apart from the term $\nu=-\frac{2^{13/3}m^3}{9n^{7/3}}\Ai'(\alpha)$ are dominated by $\frac{2^{1/3}}{n^{3/2}}$. By Lemma~\ref{lem:PsiPhi} we have
	\begin{align*}
		\frac{2^{1/3} m}{n^{1/3}} \Ai'(\alpha) - \Ai(\alpha) < 0.
	\end{align*}
	Hence, $\nu  > -\frac{16 m^2}{9n^{2}}\Ai(\alpha)$,
	and it can therefore be treated as if it belonged to the coefficients of $\Ai(\alpha)$.
	Thus, as the dominating terms are positive, there exists some $N_{0}$ such that $P_{n,m} > 0$ whenever $n>N_{0}$ and $m \leq x_0 (n/2)^{1/3}$.
	
	\item Next, consider the central range $x_0 (n/2)^{1/3} < m \leq n^{7/18}$. 
	Here, we have $\Ai'(\alpha) < 0$.
	On the one hand, as seen in the left part of Figure~\ref{fig:PosP2}, the (red) coefficients of $\Ai(\alpha)$ are still dominated by $n^{-7/6}$ (which holds up to $m = \Theta(n^{5/12})$).	
	 On the other hand, in this range the term $\nu=-\frac{2^{13/3}m^3}{9n^{7/3}}\Ai'(\alpha)$ dominates all other (blue) coefficients of $\Ai'(\alpha)$ (due to $\tau_1=1/2$). Since $\nu>0$ in this range, this implies that there exists some (sufficiently large) $N_{1}$ such that $P_{n,m} > 0$ whenever $n>N_{1}$ and $x_0 (n/2)^{1/3}< m \leq n^{7/18}$.
	
	\item Finally, consider the range of large values $n^{7/18} < m <n^{2/3-\epsilon}$.
	By the reasoning on $\Psi(x)$ in Lemma~\ref{lem:PsiPhi} we see that $-\Ai'(\alpha) > \Ai(\alpha)>0$.
	Therefore, the (blue) term $\nu$ dominates all of the (red) terms of $\Ai(\alpha)$ as well as all other (blue) terms of $\Ai'(\alpha)$. Hence there exists some $N_{2}$ such that $P_{n,m} > 0$ whenever $n>N_{2}$ and $n^{7/18} < m <n^{2/3-\epsilon}$.
	\end{enumerate}
	
	%Each case gave rise to an $n$ from which on the claim holds. 
	Choosing $\tilde{n}_0=\max\{N_{0},N_{1},N_{2}\}$ completes the proof.
\end{proof}

\begin{remark}
	The previous result could be strengthened to hold up to $m \leq n^{1-\varepsilon}$ by~\eqref{eq:Psilarge} as will be shown in the proof of Lemma~\ref{lem:AiryXUpper}. However, we will not need this result in the sequel.
\end{remark}	

Now, to complete the lower bound we define the sequence $X_{n,m}:=\max\{\tilde{X}_{n,m},0\}$, i.e., %
%or equivalently, due to the negative factor for large $m$, 
\[
 X_{n,m}:=
  \begin{cases} 
      \hfill \tilde{X}_{n,m},    \hfill & \text{ if $m<\frac{\sqrt{96n+9}+3}{8}$,} \\
      \hfill 0,\hfill & \text{ if $m\geq \frac{\sqrt{96n+9}+3}{8}$.} \\
  \end{cases}
\]
Then, in the first case we get the following inequality for all sufficiently large $n$
\begin{align*}
	X_{n,m}\tilde{s}_{n}\leq \frac{n-m+2}{n+m} \tilde{X}_{n-1,m-1} + \tilde{X}_{n-1,m+1}\leq \frac{n-m+2}{n+m} X_{n-1,m-1} + X_{n-1,m+1},
\end{align*}
using Lemma \ref{lem:AiryXLower} with $\varepsilon=\frac{1}{12}$. Note that we could choose any $\varepsilon\in(0,\frac{1}{6})$, as we just need $n^{2/3-\varepsilon}>\frac{\sqrt{96n+9}+3}{8}$ for large $n$. In the second case we have
\[X_{n,m}\tilde{s}_{n}=0\leq \frac{n-m+2}{n+m} X_{n-1,m-1} + X_{n-1,m+1}.\]

Finally, we write $\tilde{h}_{n}=\tilde{s}_{n}\tilde{h}_{n-1}$ and we deduce by induction that $\rs_{n,m}\geq b \tilde{h}_{n}X_{n,m}$ for some constant $b>0$, all sufficiently large $n$ and all $m\in[0,n]$. In particular, it follows from~\eqref{eq:inductionXH} that the number $r_{n}=n!\rs_{2n,0}$ of relaxed trees of size $n$ is bounded below by
\begin{align}
	\label{eq:rnlower}
	r_{n} \geq \gamma\, n! 4^n e^{3a_1n^{1/3}} n^{},
\end{align}
for some constant $\gamma>0$. In the next section we will show an upper bound with the same asymptotic form, but with a different constant $\gamma$.

%\begin{remark}
%	Note that the above proof can be extended up to $m < n^{1-\varepsilon}$, but we will not need such a general result.
%\end{remark}
\subsection{Upper bound}

Next, we consider a similar auxiliary sequence $\hat{X}_{n,m}$ which will give rise to an upper bound on the number of relaxed binary trees.

\begin{lemma}
	\label{lem:AiryXUpper}
	Choose $\eta > 2/9$ fixed and for all $n,m \geq 0$ let
	\begin{align*}
		\hat{X}_{n,m} &:= \left(1-\frac{2m^2}{3n} + \frac{m}{2n} + \eta\frac{m^4}{n^2}\right)\Ai\left(a_{1}+\frac{2^{1/3}(m+1)}{n^{1/3}}\right)~~~~~~~~\text{and}\\
		\hat{s}_n &:= 2+\frac{2^{2/3}a_1}{n^{2/3}}+\frac{8}{3n} + \frac{1}{n^{7/6}}.
	\end{align*}
	Then, for any $\varepsilon>0$, there exists a constant $\hat{n}_0$ such that 
	\begin{align}
		\label{XequationB}
		\hat{X}_{n,m}\hat{s}_{n} \geq \frac{n-m+2}{n+m} \hat{X}_{n-1,m-1} + \hat{X}_{n-1,m+1},
	\end{align}
	for all $n\geq \hat{n}_0$ and all $0 \leq m < n^{1-\varepsilon}$.
\end{lemma}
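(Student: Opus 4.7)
The strategy is to follow the template of Lemma~\ref{lem:AiryXLower} step for step, with two essential modifications dictated by the definitions: the sign of $\sigma_4$ is flipped ($+1$ instead of $-1$), and the quartic correction $+\eta m^4/n^2$ has been added to the prefactor in $\hat{X}_{n,m}$. I define the defect
\[
Q_{n,m} := \hat{X}_{n,m}\hat{s}_n - \frac{n-m+2}{n+m}\hat{X}_{n-1,m-1} - \hat{X}_{n-1,m+1},
\]
and aim to prove $Q_{n,m}\geq 0$ for $n\geq \hat{n}_0$ and $0\leq m<n^{1-\varepsilon}$. Expanding all three Airy values around $\alpha = a_1+\frac{2^{1/3}m}{n^{1/3}}$ and reducing all higher derivatives via $\Ai''(\alpha)=\alpha\Ai(\alpha)$, I obtain
\[
Q_{n,m} = q_{n,m}\Ai(\alpha) + q'_{n,m}\Ai'(\alpha),
\]
where $q_{n,m}$ and $q'_{n,m}$ are absolutely convergent Puiseux series in $n^{-1/6}$ (for $n>m$) whose coefficients are polynomials in $m$. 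The computation is identical in form to the one performed for $P_{n,m}$ in Lemma~\ref{lem:AiryXLower}; the only differences in the tabulated coefficients come from $\sigma_4=+1$ and the extra $\eta m^4/n^2$ term in the prefactor, and they can be produced from the same Maple worksheet.

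The Newton polygon associated with $Q_{n,m}$ coincides with that of $P_{n,m}$ except that (a) the leading monomial on the $\Ai(\alpha)$ side is now $+\sigma_4 n^{-7/6}\Ai(\alpha)=+n^{-7/6}\Ai(\alpha)$, of the desired sign, and (b) the $+\eta m^4/n^2$ addition contributes a new positive term $+\eta\frac{m^4}{n^2}(\hat{s}_n-2)\Ai(\alpha)\sim 2\eta a_1 \frac{m^4}{n^{8/3}}\Ai(\alpha)$ on the $\Ai(\alpha)$ side together with $\Ai'(\alpha)$ pieces of order $\frac{m^4}{n^{10/3}}$ and smaller, plus a main contribution of order $\frac{m^5}{n^{11/3}}\Ai'(\alpha)$. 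Crucially, by choosing $\eta>2/9$ the net coefficient in front of $\frac{m^4}{n^3}\Ai(\alpha)$ flips sign relative to Lemma~\ref{lem:AiryXLower}, and this is precisely what is needed to dominate $-\nu=+\frac{2^{13/3}m^3}{9n^{7/3}}\Ai'(\alpha)$ in the large-$m$ regime where that term would otherwise have the wrong sign.

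With these dominant terms identified, I split $[0,n^{1-\varepsilon})$ into the same three regimes as in Lemma~\ref{lem:AiryXLower}, letting $x_0$ be the positive root from Lemma~\ref{lem:PsiPhi}. In the regime $m\leq x_0(n/2)^{1/3}$, both $\Ai(\alpha)$ and $\Ai'(\alpha)$ are positive, and $Q_{n,m}$ is dominated by $+n^{-7/6}\Ai(\alpha)$, which is positive. In the intermediate regime $x_0(n/2)^{1/3}<m\leq n^{7/18}$, one has $\Ai'(\alpha)<0$; here the $-\nu$ contribution to $Q_{n,m}$ has the wrong sign and must be absorbed by the new $+\eta m^4/n^2$ terms, whose contribution to $Q_{n,m}$ of order $\frac{m^4}{n^{8/3}}\Ai(\alpha)$ dominates $\nu$ provided $\eta>2/9$. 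In the large regime $n^{7/18}<m<n^{1-\varepsilon}$, I use~\eqref{eq:Psilarge} to rewrite $\Ai'(\alpha)=\Psi(\tfrac{2^{1/3}(m+1)}{n^{1/3}})\Ai(\alpha)$ with $\Psi\sim -\sqrt{2^{1/3}m/n^{1/3}}$, which collapses $Q_{n,m}$ to a scalar multiple of $\Ai(\alpha)$ whose dominant coefficient is again the positive $+\eta m^4/n^2$ contribution (of order $\frac{m^4}{n^2}\Ai(\alpha)$, beating every competing term for $m\leq n^{1-\varepsilon}$).

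The main obstacle is the third regime: the range $n^{7/18}\leq m<n^{1-\varepsilon}$ was not needed in Lemma~\ref{lem:AiryXLower}, and in it one must carefully use~\eqref{eq:Psilarge} with explicit error control (together with Lemma~\ref{lem:PsiPhi} to guarantee monotonicity and hence uniformity of the substitution $\Ai'(\alpha)=\Psi(\cdots)\Ai(\alpha)$) so as to convert each $\Ai'(\alpha)$-piece of $q'_{n,m}$ into an $\Ai(\alpha)$-piece whose magnitude is controlled by $\sqrt{m/n^{1/3}}$ times the corresponding monomial. Once this uniform comparison is in place, picking any $\eta>2/9$ makes the $+\eta m^4/n^2$ term dominate every other coefficient for $m$ up to $n^{1-\varepsilon}$, completing the induction with $\hat{n}_0:=\max\{N_0,N_1,N_2\}$ chosen large enough that each regime's dominant term exceeds the sum of the subdominant ones.
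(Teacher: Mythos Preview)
Your overall template is right, but the mechanism you attribute to $\eta>2/9$ is not the correct one, and this makes the argument break in regimes~2--4.

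First, the term $\eta m^4/n^2$ in the prefactor of $\hat X_{n,m}$ does \emph{not} survive in $Q_{n,m}$ at order $m^4/n^2$: since $\hat s_n\approx 2$ and $\frac{n-m+2}{n+m}\approx 1$, the three copies of $\eta m^4/n^2\cdot\Ai(\cdot)$ cancel to leading order, just as the constant term~$1$ does. The Newton polygon of $Q_{n,m}$ is shifted (now $[m^in^j]Q_{n,m}=0$ for $i+j>2$), but the actual surviving monomials on the hull are of much lower order than $m^4/n^2$. So your claimed dominant contribution ``of order $\tfrac{m^4}{n^2}\Ai(\alpha)$'' in the large regime does not exist, and your absorption argument in regime~2 via a term of order $m^4/n^{8/3}\Ai(\alpha)$ also fails (that term has coefficient proportional to $a_1\eta<0$, the wrong sign).

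The correct role of $\eta$ is on the $\Ai'(\alpha)$ side: the coefficient of $\tfrac{m^3}{n^{7/3}}\Ai'(\alpha)$ becomes $\tfrac{2^{10/3}(2-9\eta)}{9}$, which is \emph{negative} precisely when $\eta>2/9$. In regimes~2 and~3 one has $\Ai'(\alpha)<0$, so this product is positive and dominates there. Furthermore, the quartic correction produces a new hull term $-\tfrac{2^{10/3}\eta}{3}\tfrac{m^5}{n^{10/3}}\Ai'(\alpha)$ (order $n^{-10/3}$, not $n^{-11/3}$), which becomes comparable to the $m^3/n^{7/3}$ term at $m\sim n^{1/2}$ and takes over for larger $m$. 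This forces a \emph{fourth} regime $n^{1/2}<m<n^{1-\varepsilon}$, in which the dominant red term $-\tfrac{17\eta}{3}\tfrac{m^6}{n^4}\Ai(\alpha)$ is negative; you must then use $\Ai'(\alpha)\sim -2^{1/6}(m/n^{1/3})^{1/2}\Ai(\alpha)$ from~\eqref{eq:Psilarge} to show that the positive blue $m^5/n^{10/3}$ term still dominates it. Your three-regime split and the terms you single out do not capture this, so the proof as written does not go through.
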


\begin{proof}
	The proof follows the same lines as that of Lemma~\ref{lem:AiryXLower}. 
	Therefore we only focus on the needed modifications. 
	As a first step we define the following sequence
	\begin{align*}
		Q_{n,m} := \hat{X}_{n,m}\hat{s}_{n} - \frac{n-m+2}{n+m} \hat{X}_{n-1,m-1} - \hat{X}_{n-1,m+1}.
	\end{align*}
	Then the inequality~\eqref{XequationB} is equivalent to $Q_{n,m} \geq 0$. 
	Again, we expand $\Ai(z)$ in a neighborhood of 
	$
		\aiarg = a_{1}+\frac{2^{1/3} m}{n^{1/3}},
	$
	and we get (see~\cite{Wallner2019web} for more details)
	\begin{align*}
		Q_{n,m} &=  q_{n,m} \Ai(\aiarg) + q'_{n,m} \Ai'(\aiarg),
	\end{align*}
	where $q_{n,m}$ and $q'_{n,m}$ are functions of $m$ and $n^{-1}$ and may again be expanded as power series in $n^{-1/6}$ with coefficients polynomial in $m$.
	Now, it is easy to see that $[m^i n^{j}]Q_{n,m} = 0$ for $i+j > 2$, where the shift by~$1$ compared to the lower bound is due to the factor $\eta m^4n^{-2}$.
	The initial non-zero coefficients are shown in Figure~\ref{fig:PosP3}. 
	The four lines (black, red, green, blue) of the convex hull are
	\begin{align*}
		\hat{L}_1 &: j = -\frac{7}{6} - \frac{7i}{18}, \\
		\hat{L}_2 &: j = -\frac{5}{6} - \frac{i}{2}, \\
		\hat{L}_3 &: j = - \frac{2i}{3}, \\
		\hat{L}_4 &: j = 2 - i.
	\end{align*}
	
	\begin{figure}[ht]
		\centering
		\includegraphics[width=0.48\textwidth]{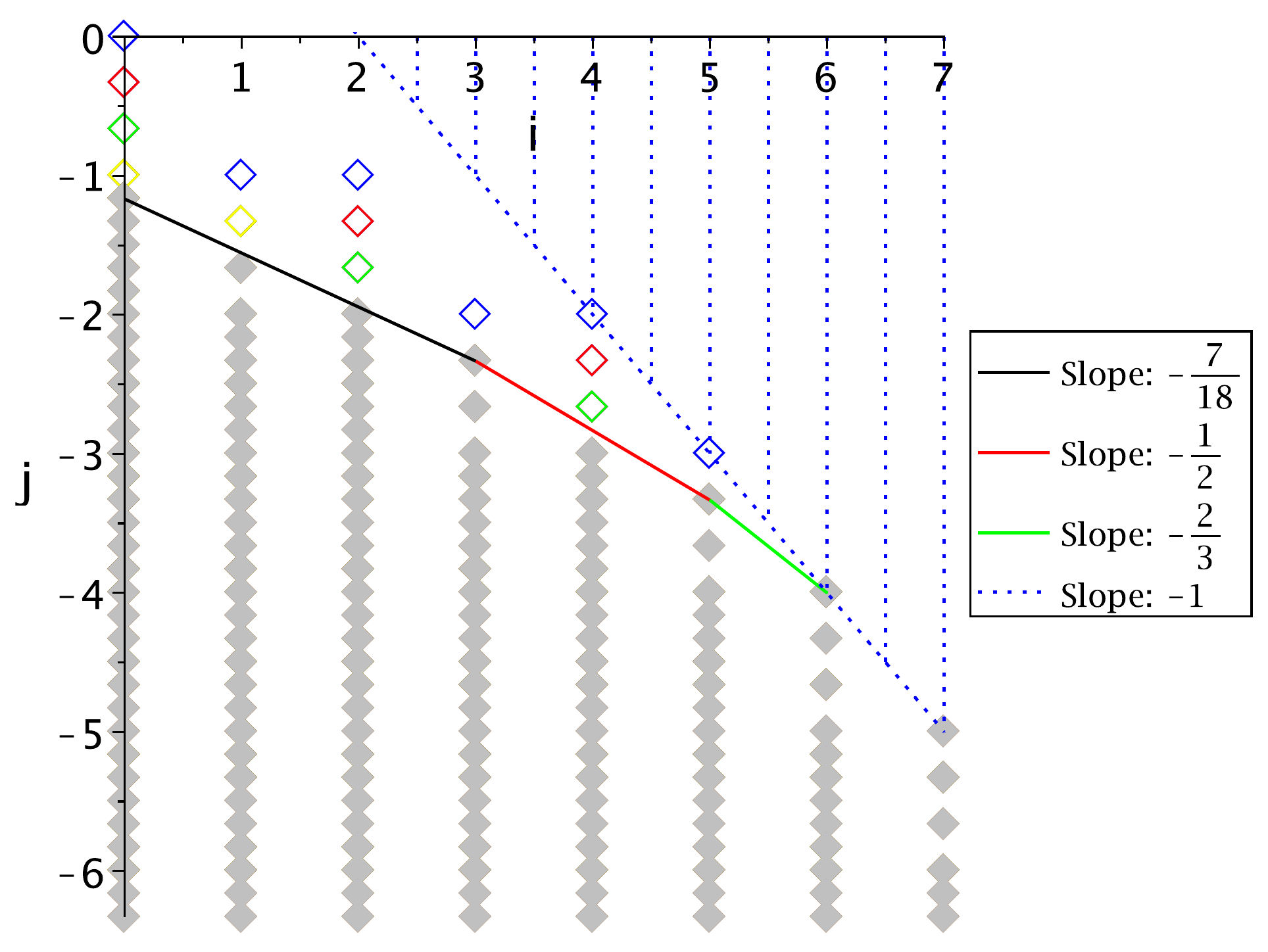}%
		\quad
		\includegraphics[width=0.48\textwidth]{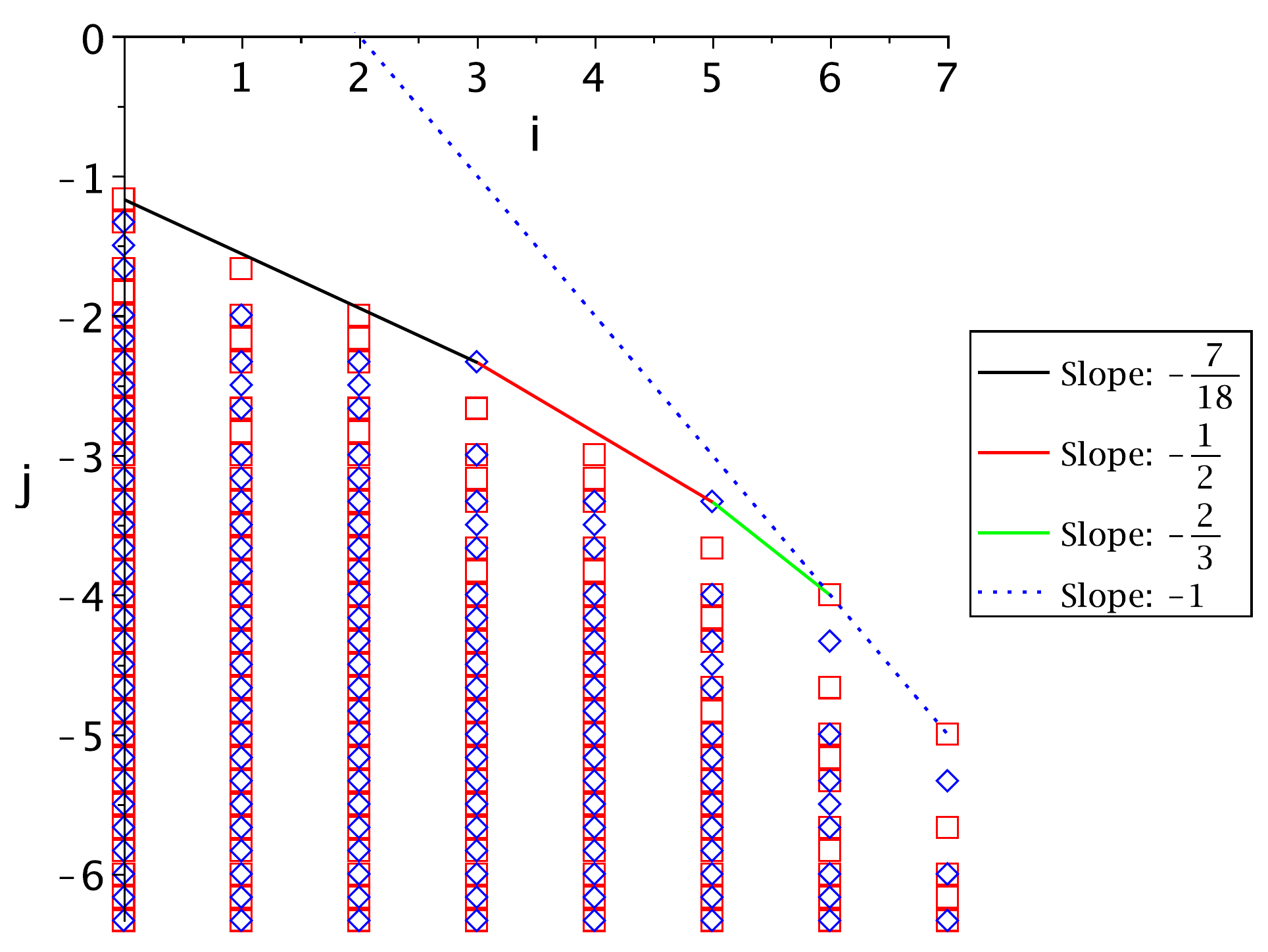}%
		\caption{(Left) Non-zero coefficients of $Q_{n,m} = \sum b_{i,j} m^i n^j$ shown in solid gray diamonds for $s_n := \sigma_0 + \frac{\sigma_1}{n^{1/3}} + \frac{\sigma_2}{n^{2/3}}+\frac{\sigma_3}{n} + \frac{\sigma_4}{n^{7/6}}$ and $X_{n,m} := \left(1+\frac{\tau_2 m^2+\tau_1 m}{n} + \eta \frac{m^4}{n^2} \right)\Ai\left(a_{1}+\frac{2^{1/3}(m+1)}{n^{1/3}}\right)$. There are no terms in the blue dashed area. The blue terms vanish for $\sigma_0=2$, the red terms vanish for $\sigma_1=0$, the green terms vanish for $\sigma_2=2^{2/3}a_1$, and the yellow term vanishes for $\sigma_3=8/3$ and $\tau_2=-2/3$. The black, red, and green lines represent the three parts $\hat{L}_1$, $\hat{L}_2$ and $\hat{L}_3$, respectively, of the convex hull. (Right) The solid gray diamonds are decomposed into the coefficients $q_{n,m}$ of $\Ai(\alpha)$ (red boxes) and $q'_{n,m}$ of $\Ai'(\alpha)$ (blue diamonds).}
		\label{fig:PosP3}
	\end{figure}

	\begin{figure}[ht]
		\centering
		\includegraphics[width=0.48\textwidth]{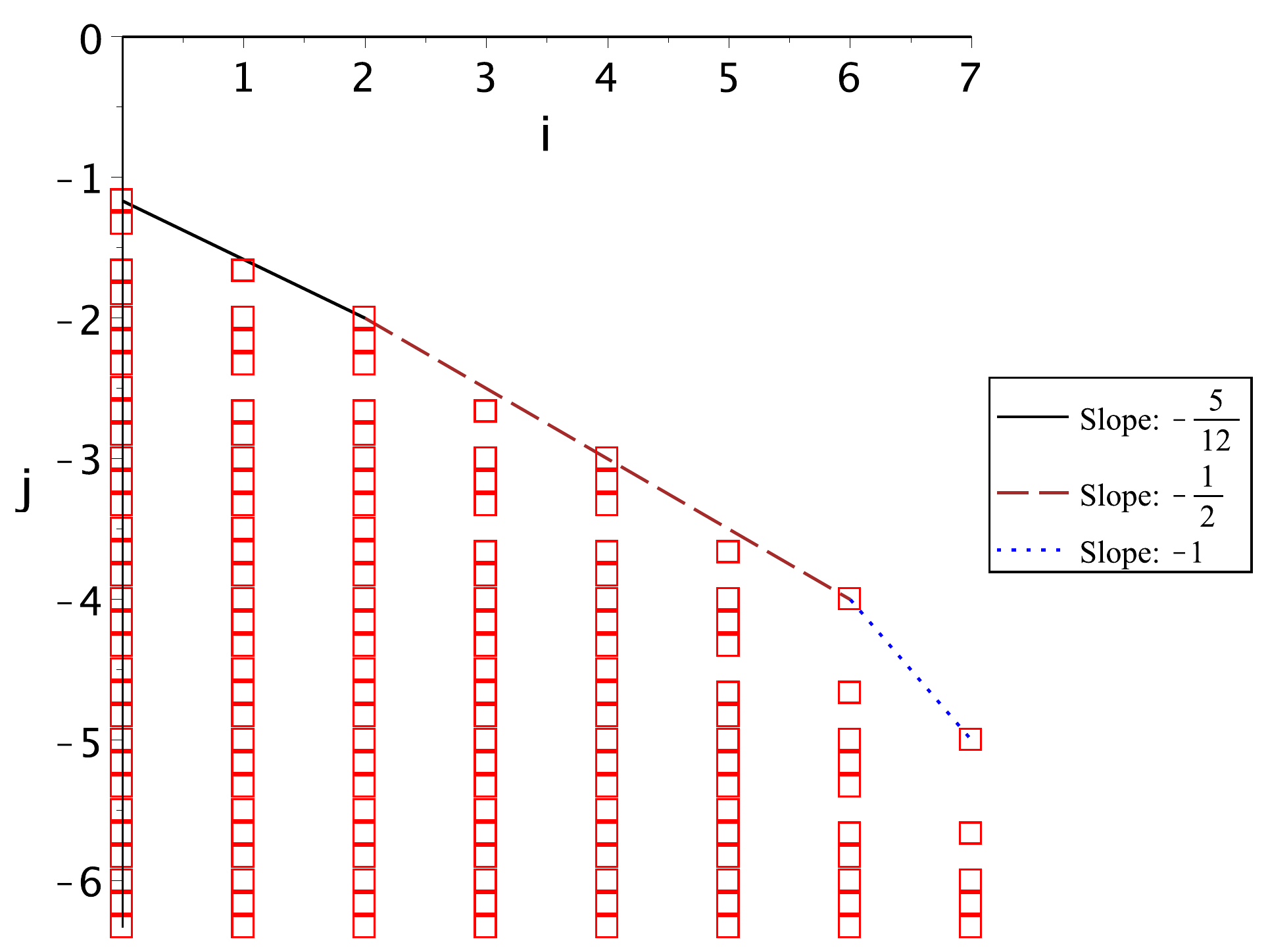}%
		\quad
		\includegraphics[width=0.48\textwidth]{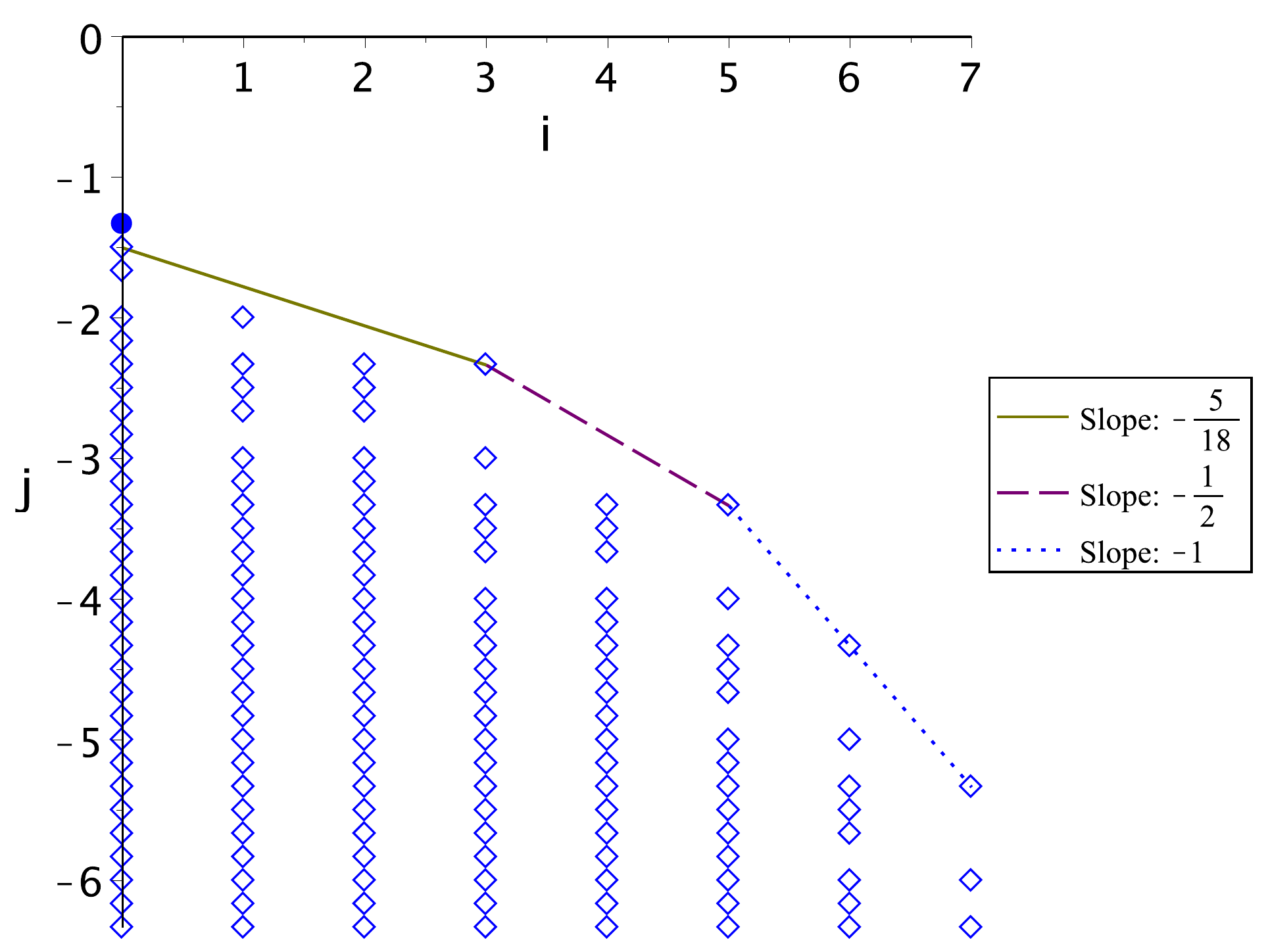}%
		\caption{Non-zero coefficients $q_{k,l} = \sum \tilde{b}_{i,j} m^i n^j$ (red) and $q'_{k,\ell} = \sum \tilde{b}_{i,j}' m^i n^j$ (blue) of the expansion for $Q_{n,m}$. The coefficient of $n^{-4/3}$ in the right picture depicted as a solid blue circle disappears for $\tau_1=1/2$.}
		\label{fig:PosP4}
	\end{figure}	
	
	\pagebreak
	Next, we distinguish between the contributions arising from $q_{n,m}$ and $q'_{n,m}$. The non-zero coefficients are shown in Figure~\ref{fig:PosP4}.
	The expansions for $n$ tending to infinity start as follows, where the elements on the convex hull are written in color.
	\begin{align*}
		Q_{n,m} =&~\Ai(\alpha) \left(
			\textcolor{red}{\frac{\sigma_4}{n^{7/6}}} 
			+ \frac{2^{5/3} a_1 m}{3 n^{5/3}} 
			\textcolor{red}{+ \frac{m^2 (41-108\eta)}{9n^2}} 
			+ \frac{2^{8/3} a_1 m^3(1-6\eta)}{3 n^{8/3}} 
			\textcolor{red}{+ \frac{2 m^4 (17-132\eta)}{9n^3}} \right. \\
			& \qquad \qquad - \left. \frac{2^{5/3} a_1 m^5 \eta}{n^{11/3}}  \textcolor{red}{- \frac{17 m^6 \eta}{3n^4}} \textcolor{red}{- \frac{31 m^7 \eta}{45 n^5}} +  \ldots
					\right) + \\
			    &~\Ai'(\alpha) \left(
			\textcolor{blue}{\frac{2^{1/3}}{n^{3/2}}} 
			+ \frac{8 a_1 m}{9n^2} 
			+ \frac{2^{1/3} m^2(19-108\eta)}{9 n^{7/3}} 
			\textcolor{blue}{+ \frac{2^{10/3} m^3(2-9\eta)}{9n^{7/3}}} + \frac{5m^4 2^{1/3} (2-27\eta)}{9n^{10/3}}  \right. \\
			& \qquad \qquad \left.  \textcolor{blue}{- \frac{2^{10/3} m^5 \eta}{3n^{10/3}} - \frac{5 m^6 2^{1/3} \eta}{3n^{13/3}} - \frac{89 m^7 2^{1/3} \eta}{45 n^{16/3}}} + \ldots
				    \right).
	\end{align*}	
	Let $x_0$ be again the unique positive root of $\Psi(x)$ from Lemma~\ref{lem:PsiPhi}.
	In order to prove that $Q_{n,m}\geq 0$ for $m \leq n^{1-\varepsilon}$, we consider the following four regions: 
	\begin{enumerate}
		\item $m \leq x_0 (n/2)^{1/3}$, 
		\item $x_0 (n/2)^{1/3} < m \leq n^{7/18}$,
		\item $n^{7/18} < m \leq n^{1/2}$,
		\item $n^{1/2} < m \leq n^{1-\varepsilon}$.
	\end{enumerate}
	Recall that in the proof that $P_{n,m}\geq0$ in Lemma \ref{lem:AiryXLower}, we considered almost the same first $3$ regions, except that in that case the upper bound on the third region was slightly larger ($n^{2/3-\epsilon}$). So the main difference here is the addition of the fourth region, which is required for this lemma to apply up to $m=n^{1-\varepsilon}$.
	
	The treatments of the first $3$ regions are analogous to those in Lemma~\ref{lem:AiryXLower} except for 2 minor changes.  
	First, in the second and third regime we include the additional variable $\eta$ to make the dominant term $\frac{2^{10/3} m^3(2-9\eta)}{9n^{7/3}}\Ai'(\alpha)$ positive. 
	Second, in the third regime an additional dominant term $- \frac{2^{10/3} m^5 \eta}{3n^{10/3}}\Ai'(\alpha)$ appears for $m = \Theta(n^{1/2})$ which is positive anyway.
	
	Finally, in the fourth regime, the aforementioned term $- \frac{2^{10/3} m^5 \eta}{3n^{10/3}}\Ai'(\alpha)$ is positive and dominates all other blue terms.
	However, the dominant red term is $- \frac{17 m^6 \eta}{3n^4}\Ai(z)$, which is negative, so it suffices to show that this is dominated by the blue term. 
	Indeed, due to~\eqref{eq:Psilarge} we know that as $m/n^{1/3}$ tends to infinity, $\Ai'(\alpha) \sim -2^{1/6} \frac{m^{1/2}}{n^{1/6}} \Ai(\alpha)$.
	Hence, the blue term $- \frac{2^{10/3} m^5 \eta}{3n^{10/3}} \Ai'(\alpha)$ dominates in this entire region $n^{1/2} < m \leq n^{1-\varepsilon}$.
\end{proof}

To finish the proof of the upper bound, we will choose some constant $N>0$ and define a sequence $\rsr_{n,m}$ by the same rules as $\rs_{n,m}$ except that $\rsr_{n,m}=0$ whenever $m>n^{3/4}$ and $n>N$. Then, writing $\hat{h}_{n}=\hat{s}_{n}\hat{h}_{n-1}$, we can use the lemma above to show by induction that the numbers $\rsr_{n,m}$ satisfy the inequality.
\[b_{0}\rsr_{n,m}\leq \hat{h}_{n}X_{n,m},\]
for some constant $b_{0}$ and all sufficiently large $n$; compare~\eqref{eq:inductionXH}. In particular, the numbers $\rsr_{2n,0}$ are bounded above by 
\[\rsr_{2n,0}\leq \gamma 4^{n}e^{3a_1n^{1/3}}n,\]
for some constant $\gamma>0$. The rest of this section is dedicated to proving that there is some choice of $N$ such that $\rsr_{2n,0}\geq \rs_{2n,0}/2$ for all $n$.

In order to finish our proof of the upper bound for the numbers $\rs_{2n,0}$, we will use the interpretation of these numbers as weighted Dyck paths, described in Section~\ref{sec:meander}. It will be useful to have an upper bound on the number of these paths which pass through a certain point $(2x,2y)$ as a proportion of the total weighted number of paths. 
Let $p_{\ell,m,2n}$ denote the weighted number of paths from $(\ell,m)$ to $(2n,0)$; see Figure~\ref{fig:sxyn}. 
Then the proportion $s_{x,y,n}$ of the $\rs_{2n,0}$ weighted Dyck paths that pass through $(2x,2y)$ is \[s_{x,y,n}=\frac{\rs_{2x,2y}p_{2x,2y,2n}}{\rs_{2n,0}}.\] The following lemma yields an upper bound on the number $p_{2x,2y,2n}$.

\begin{figure}[ht]
	\centering
	\includegraphics[width=0.5\textwidth]{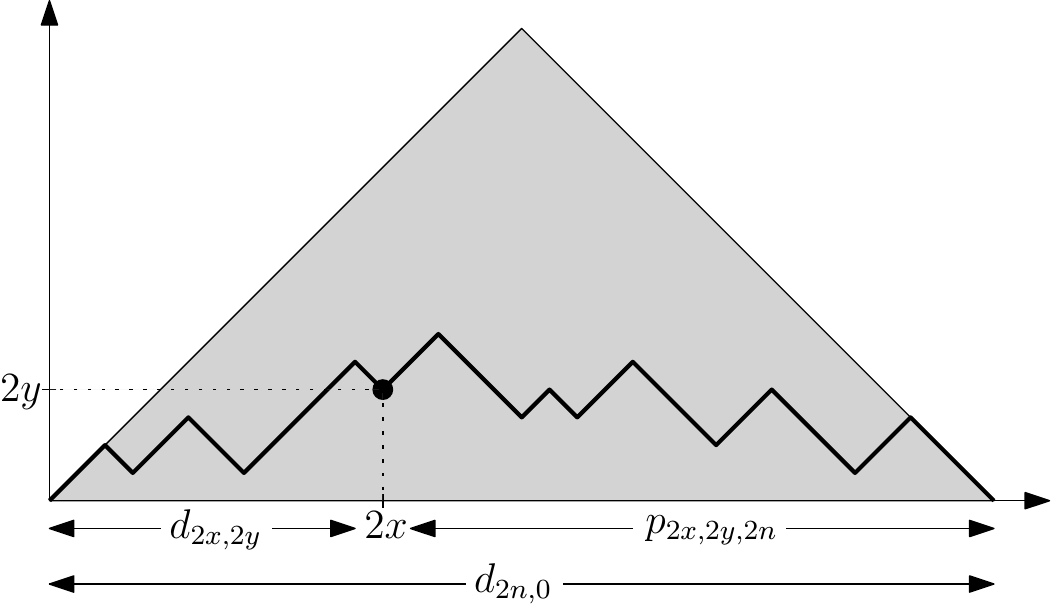}%
	\caption{Proportion of weighted Dyck paths of length $2n$ passing through the point $(2x,2y)$ showing one example path contributing to $s_{x,y,n}$.}
	\label{fig:sxyn}
\end{figure}	

\begin{lemma}\label{lem:relaxed_end} The numbers $p_{\ell,m,2n}$ satisfy the inequality
\[\frac{p_{\ell,j,2n}}{j+1}\geq \frac{p_{\ell,k,2n}}{k+1},\]
for integers $0\leq j<k\leq \ell\leq 2n$ satisfying $2 \mid k-j$.
\end{lemma}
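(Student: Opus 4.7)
The plan is to reduce the inequality to its consecutive case $k=j+2$ and then prove that by reverse induction on $\ell$. The full inequality $(k+1)\,p_{\ell,j,2n}\geq (j+1)\,p_{\ell,k,2n}$ is equivalent to monotonicity in $m$ of $R_{\ell,m}:=p_{\ell,m,2n}/(m+1)$ within each parity class, so by transitivity of $\geq$ it suffices to prove that
\[
\delta_{\ell,m}\;:=\;(m+3)\,p_{\ell,m,2n}\;-\;(m+1)\,p_{\ell,m+2,2n}
\]
is non-negative for all $0\leq m\leq \ell-2$.

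For the inductive step I would condition on the first step of the meander out of $(\ell,m)$ to obtain the one-step recurrence
\[
p_{\ell,m,2n}\;=\;\tfrac{\ell-m+2}{\ell+m+2}\,p_{\ell+1,m+1,2n}\;+\;p_{\ell+1,m-1,2n},
\]
which is valid for all $m\geq 0$ under the convention $p_{\ell+1,-1,2n}=0$ (and hence $\delta_{\ell+1,-1}=0$). Substituting this recurrence into both $p_{\ell,m,2n}$ and $p_{\ell,m+2,2n}$, and then re-expressing $p_{\ell+1,m-1,2n}$ and $p_{\ell+1,m+3,2n}$ through $\delta_{\ell+1,m-1}$, $\delta_{\ell+1,m+1}$, and $p_{\ell+1,m+1,2n}$, the goal is to simplify to the identity
\[
\delta_{\ell,m}\;=\;\tfrac{m+3}{m+2}\,\delta_{\ell+1,m-1}\;+\;\tfrac{(m+1)(\ell-m)}{(m+2)(\ell+m+4)}\,\delta_{\ell+1,m+1}\;+\;\tfrac{4\bigl[(m+2)(\ell+2)-m\bigr]}{(\ell+m+2)(\ell+m+4)}\,p_{\ell+1,m+1,2n}.
\]

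All three coefficients are manifestly non-negative on the admissible range $0\leq m\leq \ell$: the middle one uses $\ell-m\geq 0$, and the last uses the factorization $(m+2)(\ell+2)-m = m(\ell+1)+2(\ell+2)>0$. The base case $\ell=2n$ is immediate since $p_{2n,0,2n}=1$ and $p_{2n,m,2n}=0$ for $m\geq 1$, giving $\delta_{2n,m}\geq 0$ trivially. The inductive step then follows directly: the induction hypothesis gives $\delta_{\ell+1,m\pm 1}\geq 0$, while $p_{\ell+1,m+1,2n}\geq 0$ holds by definition, so the displayed identity forces $\delta_{\ell,m}\geq 0$. The main technical obstacle is the verification of the precise coefficient $4[(m+2)(\ell+2)-m]/[(\ell+m+2)(\ell+m+4)]$ of $p_{\ell+1,m+1,2n}$: combining the four rational contributions into this clean form is elementary but requires careful bookkeeping, and one must also check that the identity survives at the boundary $m=0$ via the stated conventions.
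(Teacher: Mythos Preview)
Your proof is correct and follows essentially the same route as the paper: reverse induction on $\ell$ from the base case $\ell=2n$, expansion via the one-step recurrence $p_{\ell,m,2n}=\frac{\ell-m+2}{\ell+m+2}p_{\ell+1,m+1,2n}+p_{\ell+1,m-1,2n}$, and verification that the resulting rational coefficients are non-negative. The only cosmetic difference is that the paper immediately applies the inductive hypothesis as the inequalities $p_{\ell+1,m+2,2n}\leq\frac{m+3}{m+1}p_{\ell+1,m,2n}$ and $p_{\ell+1,m-2,2n}\geq\frac{m-1}{m+1}p_{\ell+1,m,2n}$ to collapse everything to a single positive multiple of $p_{\ell+1,m,2n}$, whereas you keep the exact identity with the $\delta_{\ell+1,m\pm1}$ terms visible; dropping your two $\delta$-terms recovers precisely the paper's final bound (after the index shift $m\mapsto m+1$).
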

\begin{proof}
First we note that the numbers $p_{\ell,m,2n}$ are determined by the recurrence relation
\[p_{\ell,m,2n}=p_{\ell+1,m-1,2n}+\frac{\ell-m+2}{\ell+m+2}p_{\ell+1,m+1,2n}\]
along with the initial conditions $p_{2n,m,2n}=\delta_{m,0}$ and $p_{l,-1,2n}=0$.
We will now prove the statement of the lemma by reverse induction on $\ell$.
Our base case is $\ell=2n$, for which the inequality clearly holds. For the inductive step, we assume that the inequality holds for $\ell+1$ and all $m$, and we will prove that it holds for $\ell$. It suffices to prove that for $m\geq 1$ the following inequality holds
\[\frac{p_{\ell,m-1,2n}}{m}-\frac{p_{\ell,m+1,2n}}{m+2}\geq0.\]
Let $L$ denote the left-hand side of this inequality. Using the recurrence relation, we can rewrite $L$ as
\[L=\frac{p_{\ell+1,m-2,2n}}{m}+\frac{(\ell-m+3)p_{\ell+1,m,2n}}{(\ell+m+1)m}-\frac{p_{\ell+1,m,2n}}{m+2}-\frac{(\ell-m+1)p_{\ell+1,m+2,2n}}{(\ell+m+3)(m+2)}.\]
Now, by the inductive assumption we get the inequalities $p_{\ell+1,m+2,2n}\leq\frac{m+3}{m+1}p_{\ell+1,m,2n}$ and $p_{\ell+1,m-2,2n}\geq\frac{m-1}{m+1}p_{\ell+1,m,2n}$, where the latter even holds for $m=1$ as then both sides are $0$. It follows that
\begin{align*}L&\geq \frac{(m-1)p_{l+1,m,2n}}{(m+1)m}+\frac{(\ell-m+3)p_{\ell+1,m,2n}}{(\ell+m+1)m}-\frac{p_{\ell+1,m,2n}}{m+2}-\frac{(m+3)(\ell-m+1)p_{\ell+1,m,2n}}{(m+1)(\ell+m+3)(m+2)}\\
&=\frac{4(3+m+\ell+m\ell)p_{\ell+1,m,2n}}{m(m+2)(1+m+\ell)(3+m+\ell)}\geq0\end{align*}
as desired. This completes the induction, which proves the inequality for $\ell\in[0,2n]$.
We refer to the accompanying worksheet~\cite{Wallner2019web} for more details.
\end{proof}

In particular, it follows from this lemma that
\[p_{2x,2y,2n}\leq (2y+1)p_{2x,0,2n}.\]
Moreover, note that the proportion $s_{x,0,n}$ of weighted paths passing through $(2x,0)$ satisfies $s_{x,0,n}\leq1$. Hence, the proportion $s_{x,y,n}$ satisfies
\begin{align}
	\label{eq:sxynupperbound}
	s_{x,y,n}=\frac{p_{2x,2y,2n}\rs_{2x,2y}}{\rs_{2n,0}}\leq\frac{(2y+1)p_{2x,0,2n}\rs_{2x,2y}}{\rs_{2n,0}s_{x,0,n}}=(2y+1)\frac{\rs_{2x,2y}}{\rs_{2x,0}}.
\end{align}
From the lower bound \eqref{eq:rnlower} we have
\[\rs_{2x,0}\geq\gamma 4^{x}e^{3a_1x^{1/3}}x,\]
so we now desire an upper bound for $\rs_{2x,2y}$. It will suffice to use the upper bound
\[\rs_{2x,2y}\leq{\binom{2x}{x+y}},\]
which holds because the right-hand side is the number of (unweighted) paths from $(0,0)$ to $(2x,2y)$, and all weights on our weighted paths are smaller than $1$. We are now ready to prove the following lemma
\begin{lemma}\label{lem:choose_const} 
For all $\varepsilon>0$ there exists a constant $N_{\varepsilon}>0$ with the following property:
Recall that $\rs_{n,m}$ is the weighted number of paths ending at $(n,m)$. Let $\rsr_{n,m}$ be the number of these paths such that no intermediate point $(2x,2y)$ on the path satisfies $x>N_{\varepsilon}$ and $y>x^{3/4}$. Then $\rs_{2n,0}\leq(1+\varepsilon)\rsr_{2n,0}$ for all $n>0$.
\end{lemma}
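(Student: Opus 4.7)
The plan is to bound, uniformly in $n$, the weight contribution to $\rs_{2n,0}$ of paths passing through at least one forbidden intermediate point, that is, a point $(2x,2y)$ with $x>N_{\varepsilon}$ and $y>x^{3/4}$. A union bound over forbidden points yields
\[
\rs_{2n,0}-\rsr_{2n,0} \;\leq\; \sum_{x>N_{\varepsilon}}\sum_{y>x^{3/4}} \rs_{2x,2y}\,p_{2x,2y,2n},
\]
so after dividing by $\rs_{2n,0}$ it is enough to show that the tail $\sum s_{x,y,n}$ over the forbidden region can be made smaller than $\varepsilon/(1+\varepsilon)$ by choosing $N_{\varepsilon}$ large. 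The case $n\leq N_{\varepsilon}$ is trivial because then no forbidden point can lie on a path of length $2n$, so from now on one may assume $n>N_{\varepsilon}$.

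Next I would invoke inequality \eqref{eq:sxynupperbound}, which eliminates the $n$-dependence by bounding $s_{x,y,n}$ by $(2y+1)\rs_{2x,2y}/\rs_{2x,0}$. Plugging in the trivial upper bound $\rs_{2x,2y}\leq\binom{2x}{x+y}$ together with the lower bound $\rs_{2x,0}\geq \gamma\,4^{x}e^{3a_{1}x^{1/3}}x$ from \eqref{eq:rnlower} reduces the problem to a purely analytic estimate. A standard Gaussian-type bound on the central binomial, $\binom{2x}{x+y}\leq C\cdot 4^{x}x^{-1/2}e^{-y^{2}/x}$ (valid for, say, $y\leq x/2$, with the remaining range $y>x/2$ handled by the crude estimate $\binom{2x}{x+y}\leq C'\,4^{x}e^{-cx}$), then turns each summand into
\[
(2y+1)\,\frac{\rs_{2x,2y}}{\rs_{2x,0}} \;\leq\; \frac{C''\,(2y+1)}{x^{3/2}}\,\exp\!\left(-\frac{y^{2}}{x}-3a_{1}x^{1/3}\right).
\]

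The decisive observation is that on the forbidden region $y>x^{3/4}$ one has $y^{2}/x>\sqrt{x}$, which dominates the $|3a_{1}|x^{1/3}$ contribution for large $x$. Hence, for $x$ large enough the exponent above is bounded by $-y^{2}/(2x)$. Summing in $y$ by comparison with the Gaussian integral $\int_{x^{3/4}}^{\infty}2y\,e^{-y^{2}/(2x)}\,dy=2x\,e^{-\sqrt{x}/2}$ yields an outer summand of order $x^{-1/2}e^{-\sqrt{x}/2}$, and the resulting series $\sum_{x>N_{\varepsilon}}x^{-1/2}e^{-\sqrt{x}/2}$ is convergent and tends to zero as $N_{\varepsilon}\to\infty$. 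Choosing $N_{\varepsilon}$ large enough then forces the total tail below $\varepsilon/(1+\varepsilon)$, which gives $\rs_{2n,0}\leq(1+\varepsilon)\rsr_{2n,0}$. The main obstacle is not any single inequality but the uniform bookkeeping of the Gaussian bound on $\binom{2x}{x+y}$ across the full range $y\in(x^{3/4},x]$; splitting that range at $y=x/2$ and treating the upper sub-range with the crude estimate is an inelegant but safe way to ensure the estimates remain valid throughout.
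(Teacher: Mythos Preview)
Your proposal is correct and follows essentially the same route as the paper: union bound over forbidden points, the reduction via \eqref{eq:sxynupperbound} to $(2y+1)\rs_{2x,2y}/\rs_{2x,0}$, then the binomial upper bound for $\rs_{2x,2y}$ against the lower bound \eqref{eq:rnlower} for $\rs_{2x,0}$, yielding a summable $e^{-c\sqrt{x}}$ tail. The only cosmetic difference is that the paper avoids your Gaussian estimate and the split at $y=x/2$ by simply using monotonicity of $\binom{2x}{x+y}$ in $y$ to replace $y$ by $x^{3/4}$, then applying Stirling once to get $s_{x,y,n}\leq c\cdot 2^{-x^{1/2}}$ uniformly over the forbidden region.
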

\begin{proof}
We can rewrite the desired inequality as
\[1-\frac{\rsr_{2n,0}}{\rs_{2n,0}}\leq\frac{\varepsilon}{1+\varepsilon}.\]
Note that the left-hand side is equal to the proportion of weighted paths with at least one intermediate point $(2x,2y)$ satisfying $x>N_{\varepsilon}$ and $y>x^{3/4}$. The proportion $s_{x,y,n}$ of weighted paths which go through any such point $(2x,2y)$ is bounded above by
\begin{align*}
	s_{x,y,n} 
		& \stackrel{\eqref{eq:sxynupperbound}}{\leq} (2y+1)\frac{\rs_{2x,2y}}{\rs_{2x,0}} \\
		& \stackrel{\eqref{eq:rnlower}}{\leq} \frac{2y+1}{\gamma 4^{x}e^{3a_1x^{1/3}}x}{\binom{2x}{x+y}} \\
		& \leq \gamma^{-1}4^{-x}e^{-3a_1x^{1/3}}x^{-1}\frac{(2y+1)\Gamma(2x+1)}{\Gamma(x+x^{3/4}+1)\Gamma(x-x^{3/4}+1)}.
\end{align*}
The right-hand side of this inequality behaves like
\begin{align}
	\label{eq:rhssxynbound}
	\Theta\left(e^{-x^{1/2}+\LandauO(x^{1/3})}\right)
\end{align}
for large $x$. Hence, there is some constant $c$ such that
\[s_{x,y,n}\leq c\cdot 2^{-x^{1/2}}\]
for all $x,y,n$ satisfying $y>x^{3/4}$.
Now, the proportion $1-\rsr_{2n,0}/\rs_{2n,0}$ of weighted paths passing through at least one point $(2x,2y)$ is no greater than the sum of the proportions of paths going through each such point. Hence
\begin{align*}
	1-\frac{\rsr_{2n,0}}{\rs_{2n,0}}\leq\sum_{x\geq N_{\varepsilon}+1^{\phantom{/}}}\!\sum_{x\geq y>x^{3/4}}s_{x,y,n}\leq\sum_{x\geq N_{\varepsilon}+1^{\phantom{/}}\!}\sum_{x\geq y>x^{3/4}}c\cdot 2^{-x^{1/2}}\leq\sum_{x\geq N_{\varepsilon}+1^{\phantom{/}}}\!cx\cdot 2^{-x^{1/2}}.
\end{align*}
The sum on the right converges to a value less than $\varepsilon/(1+\varepsilon)$ for sufficiently large $N_{\varepsilon}$. This completes the proof of the lemma.
\end{proof}

\begin{remark}
	Choosing $y>x^{\beta}$ instead of $y>x^{3/4}$ one can show that~\eqref{eq:rhssxynbound} behaves like $\LandauO\left(e^{-x^{2\beta-1}-3a_1x^{1/3}}\right)$. Hence, any $\beta>2/3$ gives the same result, yet $\beta=2/3$ is not sufficient.
\end{remark}

Finally, we define $\rsr_{n,m}$ as in Lemma~\ref{lem:choose_const} with some fixed $\epsilon > 0$. Then it follows from Lemma \ref{lem:AiryXUpper} that there is some constant $\gamma'>0$ 
 such that
\[\rsr_{2n,0}\leq \gamma' 4^{n}e^{3a_1n^{1/3}}n,\]
for all $n$. Hence
\[r_{n}=n!\rs_{2n,0}\leq 2\gamma' n!4^{n}e^{3a_1n^{1/3}}n,\]
completing the proof of the upper bound. We have now proven upper and lower bounds for the number $r_{n}$ of relaxed trees, which differ only in the constant term. Therefore,
\[r_{n}=\Theta\left(n!4^{n}e^{3a_1n^{1/3}}n\right).\]

\section{Proof of stretched exponential for compacted trees}
\label{sec:compacted}
\newcommand{\cs}{e}
\newcommand{\csr}{\hat{e}}
\newcommand{\csrr}{\tilde{e}}
\newcommand{\myarrayeq}{\!\!\!=}

We will now deal with compacted binary trees, whose recurrence as in Proposition~\ref{prop:reccompacted} has negative terms. We start by transforming the terms $c_{n,m}$ counting compacted trees to a sequence $\cs_{n,m}$ using the equation
\[
\cs_{n,m}=\frac{1}{((n+m)/2)!}c_{(n+m)/2,(n-m)/2},
\]
for $n-m$ even. Then, the terms $\cs_{n,m}$ are determined by the recurrence
\begin{align*}
	\left\{
		\begin{array}{rll}
		\cs_{n,m}&\myarrayeq \frac{n-m+2}{n+m} \cs_{n-1,m-1} + \cs_{n-1,m+1} - \frac{2(n-m-2)}{(n+m)(n+m-2)}\cs_{n-3,m-1}, & \text{for } n\geq m>0,\\
		\cs_{n,m}&\myarrayeq\cs_{n-1,m+1}, &\text{for } n> 0,m=0, \\
		\cs_{n,m} &\myarrayeq 1,&\text{for } n=m=0,\\
		\cs_{n,m} &\myarrayeq 0, & \text{for } m>n,
		\end{array}
	\right.
\end{align*}
and the number of compacted trees of size $n$ is equal to $n!e_{2n,0}$.

The method that we applied to \eqref{eq:relaxedrecsimp} in the relaxed case does not directly apply to this recurrence, as there is a negative term on the right-hand side. We solve this problem using the following lemma:

\begin{lemma}\label{lem:recurrence_bounds}
	For $n \geq 3$ and $n>m\geq0$, the term $\cs_{n,m}$ for compacted binary trees is bounded below by
	\begin{small}
	\[L_{\cs}=\frac{n-m+2}{n+m}\cs_{n-1,m-1} + \frac{n-m-2}{n-m} \cs_{n-1,m+1} + \frac{n-m-4}{n-m-2}\left( \frac{2}{n-m} \cs_{n-2,m+2} + \frac{2}{n+m}\cs_{n-3,m+1} \right)\]
	\end{small}
	and bounded above by
	\begin{small}
	\begin{align*}
	U_{\cs}&=
	\frac{n-m+2}{n+m}\cs_{n-1,m-1} + \frac{n-m-2}{n-m} \cs_{n-1,m+1} + \frac{2}{n-m} \cs_{n-2,m+2} \\
	& \quad + \frac{2}{n+m}\cs_{n-3,m+1}+\frac{4}{(n+m)(n+m-2)}\cs_{n-3,m-1}.
	\end{align*}\end{small}%
	That is, $L_{\cs}\leq\cs_{n,m}\leq U_{\cs}$. Furthermore, $U_\cs \leq U_\rs \leq \rs_{n,m}$ where $U_{\rs}$ is defined by the same expression as $U_{\cs}$ but with each $\cs$ replaced by $\rs$.
\end{lemma}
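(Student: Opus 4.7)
The strategy is to derive an exact identity for $\cs_{n,m}$ that eliminates the troublesome negative term in the compacted recurrence (so that the upper bound $\cs_{n,m}\le U_\cs$ becomes transparent), and then to leverage this identity together with an inductive comparison with the simpler relaxed recurrence to handle the lower bound.

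Concretely, I would first apply the compacted recurrence to expand $\cs_{n-1,m+1}$ in terms of $\cs_{n-2,m}$, $\cs_{n-2,m+2}$, $\cs_{n-4,m}$, and then apply it to expand $\cs_{n-2,m}$ in terms of $\cs_{n-3,m-1}$, $\cs_{n-3,m+1}$, $\cs_{n-5,m-1}$. To match the target form of $U_\cs$, I would split $\cs_{n-1,m+1}$ as $\tfrac{n-m-2}{n-m}\cs_{n-1,m+1} + \tfrac{2}{n-m}\cs_{n-1,m+1}$ and only expand the second piece. Substituting into the recurrence for $\cs_{n,m}$ and regrouping, the coefficient of $\cs_{n-3,m-1}$ becomes $\tfrac{2(n-m)}{(n+m)(n+m-2)} - \tfrac{2(n-m-2)}{(n+m)(n+m-2)} = \tfrac{4}{(n+m)(n+m-2)}$, turning the original negative contribution into a positive one. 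The resulting identity is
\[\cs_{n,m} = U_\cs - \tfrac{4(n-m-4)}{(n-m)(n+m)(n+m-2)}\cs_{n-4,m} - \tfrac{4(n-m-4)}{(n+m)(n+m-2)(n+m-4)}\cs_{n-5,m-1},\]
valid when $n-m\ge 4$; the subtracted quantities are nonnegative, so $\cs_{n,m}\le U_\cs$ is immediate. Small cases ($n-m\in\{1,2,3\}$) would be checked by direct inspection.

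For the lower bound $L_\cs\le \cs_{n,m}$, a direct subtraction gives
\[U_\cs-L_\cs=\tfrac{4}{(n+m)(n+m-2)}\cs_{n-3,m-1}+\tfrac{4}{(n-m-2)(n+m)}\cs_{n-3,m+1}+\tfrac{4}{(n-m-2)(n-m)}\cs_{n-2,m+2},\]
so combining with the identity above, the lower bound reduces to showing that this sum of three positive contributions dominates the two correction terms in $\cs_{n-4,m}$ and $\cs_{n-5,m-1}$. The main obstacle is precisely here: trying to deduce, say, $\cs_{n-3,m-1}\ge\tfrac{n-m-4}{n+m-4}\cs_{n-5,m-1}$ directly from the compacted recurrence reintroduces a negative term and so is circular. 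I would resolve this by either (i) iterating the same identity one level deeper on $\cs_{n-3,m-1}$ and $\cs_{n-2,m+2}$, pushing the unwanted corrections to indices $(n-6,\cdot)$ and $(n-7,\cdot)$ where they can be absorbed by a strong induction assuming the sandwich $L_\cs\le\cs_{n',m'}\le U_\cs$ at smaller sizes, or (ii) bounding the right-hand side terms through the trivial inductive inequality $\cs_{n',m'}\le\rs_{n',m'}$ (every compacted tree is a relaxed tree) and exploiting monotonicity such as $\rs_{n-3,m-1}\ge\rs_{n-4,m}$ that follows at once from the positive-coefficient relaxed recurrence.

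The final two inequalities are short. The inequality $U_\cs\le U_\rs$ holds termwise: $U_\cs$ and $U_\rs$ have identical nonnegative coefficients, and $\cs_{n',m'}\le\rs_{n',m'}$ by a trivial induction. For $U_\rs\le\rs_{n,m}$, the same double substitution applied to the simpler relaxed recurrence $\rs_{n,m}=\tfrac{n-m+2}{n+m}\rs_{n-1,m-1}+\rs_{n-1,m+1}$ yields the clean identity
\[\rs_{n,m}=U_\rs+\tfrac{2(n-m-2)}{(n+m)(n+m-2)}\rs_{n-3,m-1},\]
which contains no negative terms and immediately gives $U_\rs\le\rs_{n,m}$ whenever $n-m\ge 2$, completing the chain.
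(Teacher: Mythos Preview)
Your treatment of the upper bound $\cs_{n,m}\le U_\cs$ and of the chain $U_\cs\le U_\rs\le \rs_{n,m}$ is correct. Your exact identity
\[
\cs_{n,m}=U_\cs-\tfrac{4(n-m-4)}{(n-m)(n+m)(n+m-2)}\cs_{n-4,m}-\tfrac{4(n-m-4)}{(n+m)(n+m-2)(n+m-4)}\cs_{n-5,m-1}
\]
is a pleasant variant of the paper's argument, which instead drops the negative term in the compacted recurrence when expanding $\cs_{n-1,m+1}$ and $\cs_{n-2,m}$, arriving at $\cs_{n,m}\le U_\cs$ as an inequality directly. The substitution for $\rs$ is identical in both approaches.

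The lower bound $L_\cs\le \cs_{n,m}$, however, has a genuine gap. Your option (ii) goes the wrong way: replacing the correction terms $\cs_{n-4,m}$ and $\cs_{n-5,m-1}$ by the larger quantities $\rs_{n-4,m}$ and $\rs_{n-5,m-1}$ only strengthens what you must prove, and the monotonicity $\rs_{n-3,m-1}\ge \rs_{n-4,m}$ is useless because the left-hand side you need to be large involves $\cs_{n-3,m-1}$, for which you have no lower bound in terms of $\rs$. Your option (i) is not carried out, and there is no indication that iterating the identity produces coefficients that close up under strong induction rather than regressing indefinitely.

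The paper resolves this with a separate short induction: it first proves
\[
\cs_{n,m}\ \ge\ \tfrac{n-m+2}{n+m}\,\cs_{n-1,m-1}
\]
for all admissible $n,m$ (the negative term in the compacted recurrence is absorbed using this same inequality two steps down). This immediately upgrades to $\cs_{n,m}\ge \tfrac{n-m+2}{n+m}\cs_{n-1,m-1}+\bigl(1-\tfrac{2}{n-m}\bigr)\cs_{n-1,m+1}$, and applying this \emph{lower} bound (rather than the exact recurrence) to $\cs_{n-1,m+1}$ and $\cs_{n-2,m}$ yields $L_\cs\le \cs_{n,m}$ without any leftover correction terms. This auxiliary inequality is the missing ingredient in your argument.
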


\begin{proof}
We start with the upper bound of $e_{n,m}$. In order to prove that, we will compute successively stronger upper bounds. We start with the trivial upper bound
\begin{align}
	\label{eq:bounde1}
	\cs_{n,m}\leq \frac{n-m+2}{n+m} \cs_{n-1,m-1} + \cs_{n-1,m+1}.
\end{align}
Applying this bound to $\cs_{n-1,m+1}$ then $\cs_{n-2,m}$ we find that
\begin{align}
	\label{eq:bounde2}
	\cs_{n-1,m+1}\leq \frac{n-m}{n+m}\left(\frac{n-m}{n+m-2} \cs_{n-3,m-1} + \cs_{n-3,m+1}\right)+ \cs_{n-2,m+2}.
\end{align}
Adding $2/(n-m)$ times this inequality to the defining equation of $e_{n,m}$ yields
\begin{align*}\cs_{n,m} &\leq
	\frac{n-m+2}{n+m}\cs_{n-1,m-1} + \frac{n-m-2}{n-m} \cs_{n-1,m+1}\\
	&~+ \frac{2}{n-m} \cs_{n-2,m+2} + \frac{2}{n+m}\cs_{n-3,m+1}+\frac{4}{(n+m)(n+m-2)}\cs_{n-3,m-1}.
	\end{align*}
	
	Now we will prove that $U_{\rs}\leq\rs_{n,m}$. Note that the first two inequalities~\eqref{eq:bounde1} and \eqref{eq:bounde2} in this proof become equalities when each $\cs$ is replaced by $\rs$. Adding $2/(n-m)$ times the latter (now) equality~\eqref{eq:bounde2} to the defining equation~\eqref{eq:relaxedrecsimp} of $\rs_{n,m}$ yields
	\begin{align*}
		U_{d}=\rs_{n,m} - \frac{2(n-m-2)}{(n+m)(n+m-2)} \rs_{n-3, m-1} \leq \rs_{n,m}.
	\end{align*}
	We then see that $\cs_{n,m} \leq U_{\cs} \leq U_{\rs} \leq \rs_{n,m}$ through induction on $n$.
	
	For the lower bound on $\cs_{n,m}$, we start with the inequality
\begin{align}
	\label{eq:elower1}
	\cs_{n,m}\geq\frac{n-m+2}{n+m}\cs_{n-1,m-1}.
\end{align}
This is clear for $m=0$, and for $m\geq n$ it is an equality. We can then deduce this inequality~\eqref{eq:elower1} for all $n,m$ using induction: Assume that the statement is true for all $n<N$ and all $m\in[0,n]$. Then, for $m\in[1,n-2]$ and $n=N$, we have
\[\frac{1}{n-m}\cs_{n-1,m+1}\geq\frac{1}{n+m}\cs_{n-2,m}
%\geq\frac{n-m}{(n+m)(n+m-2)}\cs_{n-3,m-1}
>\frac{n-m-2}{(n+m)(n+m-2)}\cs_{n-3,m-1}.\]
Hence,
\vspace{-3mm}
\begin{align*}\cs_{n,m}&=\frac{n-m+2}{n+m} \cs_{n-1,m-1} + \cs_{n-1,m+1} - \frac{2(n-m-2)}{(n+m)(n+m-2)}\cs_{n-3,m-1}\\
&\geq\frac{n-m+2}{n+m} \cs_{n-1,m-1} + \left(1-\frac{2}{n-m}\right)\cs_{n-1,m+1}.\\
&\geq\frac{n-m+2}{n+m} \cs_{n-1,m-1}.\end{align*}
This completes the induction. Moreover, it shows that
\vspace{-1mm}
\begin{align}
	\label{eq:lowere2}
	\cs_{n,m}\geq\frac{n-m+2}{n+m} \cs_{n-1,m-1} + \left(1-\frac{2}{n-m}\right)\cs_{n-1,m+1},
\end{align}
for $m\in[1,n-2]$. It is easy to see that this stronger inequality~\eqref{eq:lowere2} also holds for $m=0$ and $m\geq n$. Applying~\eqref{eq:lowere2} to $\cs_{n-1,m+1}$ then $\cs_{n-2,m}$ yields

\begin{align*}\frac{1}{n-m}\cs_{n-1,m+1}&\geq\frac{1}{n+m}\cs_{n-2,m}+\frac{n-m-4}{(n-m)(n-m-2)}\cs_{n-2,m+2}\\
&\geq\frac{1}{n+m}\left(\frac{n-m}{n+m-2}\cs_{n-3,m-1}+\frac{n-m-4}{n-m-2}\cs_{n-3,m+1}\right) \\
&\quad +\frac{n-m-4}{(n-m)(n-m-2)}\cs_{n-2,m+2}.\end{align*}
Finally, combining this with the inequality
\[\cs_{n,m}\geq\frac{n-m+2}{n+m} \cs_{n-1,m-1} + \cs_{n-1,m+1} - \frac{2(n-m)}{(n+m)(n+m-2)}\cs_{n-3,m-1}\]
yields the desired result.\end{proof}

The advantage of the bounds in the lemma above is that all terms are positive, so we can derive the asymptotics using the same techniques as for relaxed binary trees. 
Note that the behavior stays the same in the process of deriving the Newton polygons and leads to the same pictures as shown in Figures~\ref{fig:PosP1} and \ref{fig:PosP2}.

\vspace{-1mm}
\subsection{Lower bound}

The following result is analogous to Lemma~\ref{lem:AiryXLower}. 

\begin{lemma}
	\label{lem:AiryYLower}
	For all $n,m \geq 0$ let
	\begin{align*}
		\tilde{Y}_{n,m} &:= \left(1-\frac{2m^2}{3n} + \frac{m}{4n} \right)\Ai\left(a_{1}+\frac{2^{1/3}(m+1)}{n^{1/3}}\right)~~~~~~~~\text{and}\\
		\tilde{s}_n &:= 2+\frac{2^{2/3}a_1}{n^{2/3}}+\frac{13}{6n} - \frac{1}{n^{7/6}}.
	\end{align*}
	Then, for any $\varepsilon>0$, there exists a constant $\tilde{n}_0$ such that 
	\begin{align*}
%		\label{YequationA}
		\begin{aligned}		
		\tilde{Y}_{n,m}\tilde{s}_{n}\tilde{s}_{n-1}\tilde{s}_{n-2} & \leq \frac{n-m+2}{n+m} \tilde{Y}_{n-1,m-1}\tilde{s}_{n-1}\tilde{s}_{n-2} + \frac{n-m-2}{n-m} \tilde{Y}_{n-1,m+1}\tilde{s}_{n-1}\tilde{s}_{n-2} \\
		& \quad + \frac{n-m-4}{n-m-2}\left( \frac{2}{n-m} \tilde{Y}_{n-2,m+2}\tilde{s}_{n-2}  + \frac{2}{n+m}\tilde{Y}_{n-3,m+1} \right),
		\end{aligned}
	\end{align*}
	for all $n\geq \tilde{n}_0$ and all $0 \leq m < n^{2/3-\varepsilon}$.
\end{lemma}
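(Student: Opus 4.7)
The plan is to mirror the argument of Lemma~\ref{lem:AiryXLower}, since the lower bound of Lemma~\ref{lem:recurrence_bounds} has the same leading structure as the relaxed recurrence~\eqref{eq:relaxedrecsimp}, differing only by positive correction terms at shifted indices $n-2,n-3$. Set
\[
P_{n,m} := \text{(right-hand side)} - \text{(left-hand side)}
\]
of the claimed inequality, so that the goal is $P_{n,m}\geq 0$. I work with the generic ansatz $\tilde{Y}_{n,m}=\bigl(1+(\tau_2 m^2+\tau_1 m)/n\bigr)\Ai\bigl(a_1+2^{1/3}(m+1)/n^{1/3}\bigr)$ and $\tilde{s}_n=\sigma_0+\sigma_1 n^{-1/3}+\sigma_2 n^{-2/3}+\sigma_3 n^{-1}+\sigma_4 n^{-7/6}$, and determine the $\sigma_j,\tau_j$ from a Newton polygon analysis, which is expected to specialize to the values in the statement.

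First I would expand every Airy evaluation in $P_{n,m}$ as a Taylor series around the common point $\alpha := a_1 + 2^{1/3}m/n^{1/3}$, using $\Ai''(\alpha)=\alpha\Ai(\alpha)$ to eliminate higher derivatives; this yields $P_{n,m}=p_{n,m}\Ai(\alpha)+p'_{n,m}\Ai'(\alpha)$ with $p_{n,m},p'_{n,m}$ Puiseux series in $n^{-1/6}$ whose coefficients are polynomial in $m$. Because $\tilde{s}_{n-1}\tilde{s}_{n-2}=\sigma_0^2+\LandauO(n^{-2/3})$ and $\tilde{s}_{n-2}=\sigma_0+\LandauO(n^{-2/3})$, the dominant balance is between the left-hand side and the two $\tilde{Y}_{n-1,m\pm 1}$ terms; this balance matches exactly the one from the relaxed case, while the remaining contributions from $\tilde{Y}_{n-2,m+2}$ and $\tilde{Y}_{n-3,m+1}$ are smaller by an extra factor $\LandauO(1/n)$ and therefore perturb only the interior of the Newton polygon rather than its outer boundary.

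Next I would impose the annihilation of the three outermost diagonals, recovering $\sigma_0=2$, $\sigma_1=0$, $\sigma_2=2^{2/3}a_1$ and $\tau_2=-2/3$ exactly as in Lemma~\ref{lem:AiryXLower}. The new feature is that the corrections from the shifted terms perturb the coefficients on the next diagonals, forcing the recalibrated values $\sigma_3=13/6$ and $\tau_1=1/4$ (in place of $8/3$ and $1/2$) to eliminate them; finally $\sigma_4=-1$ makes the surviving $n^{-7/6}\Ai(\alpha)$ coefficient positive. With these choices the Newton polygon should be structurally identical to that of Figure~\ref{fig:PosP2}, so the three-regime sign analysis of Lemma~\ref{lem:AiryXLower} — the ranges $m\leq x_0(n/2)^{1/3}$, $x_0(n/2)^{1/3}<m\leq n^{7/18}$ and $n^{7/18}<m<n^{2/3-\varepsilon}$, governed by Lemma~\ref{lem:PsiPhi} and the crossover $x_0$ — carries over verbatim, and $\tilde{n}_0$ is chosen as the maximum of the three resulting thresholds.

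The main obstacle is the algebraic bookkeeping: I have to check that the additional coefficient-ratio factors $\tfrac{n-m-2}{n-m}$, $\tfrac{n-m-4}{n-m-2}$, $\tfrac{2}{n-m}$, $\tfrac{2}{n+m}$ combined with the index shifts $m\mapsto m+2$ and $n\mapsto n-3$ do not inject any new extremal monomials into the Newton polygon that could become negative in one of the three regimes, and that the re-tuning $\sigma_3=13/6$, $\tau_1=1/4$ is indeed forced (and sufficient) to cancel the newly appearing boundary terms. This is a mechanical but tedious symbolic manipulation that I would execute by extending the accompanying Maple worksheet~\cite{Wallner2019web} used for the relaxed case.
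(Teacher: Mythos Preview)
Your proposal is correct and matches the paper's own proof essentially line for line: the paper likewise reduces to the relaxed-tree argument of Lemma~\ref{lem:AiryXLower}, expands around $\alpha=a_1+2^{1/3}m/n^{1/3}$, and finds that with the recalibrated values $\sigma_3=13/6$, $\tau_1=1/4$ (and $\sigma_4=-1$) the Newton polygon coincides with Figures~\ref{fig:PosP1}--\ref{fig:PosP2}, so the three-regime sign analysis carries over verbatim. Your identification of the bookkeeping as the only real work, delegated to the Maple worksheet, is exactly how the paper handles it.
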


\begin{proof} 
	The proof is analogous to the case of relaxed trees.	
	In this case, the expansions for $n \to \infty$ start as follows, where the elements on the convex hull are written in color:
	\begin{align*}
		P_{n,m} =&~\Ai(\alpha) \left(
			\textcolor{red}{-\frac{4 \sigma_4}{n^{7/6}}} 
			- \frac{2^{11/3} a_1 m}{3 n^{5/3}} 
			\textcolor{red}{- \frac{164m^2}{9n^2}} 
			- \frac{2^{14/3} a_1 m^3}{3 n^{8/3}}
			\textcolor{red}{- \frac{136 m^4 }{9n^3} - \frac{248 m^5}{135n^4}}  +  \ldots
					\right) + \\
			    &~\Ai'(\alpha) \left( 
			\textcolor{blue}{\frac{2^{7/3}}{n^{3/2}}} 
			- \frac{32 a_1 m}{9n^2} 
			- 7\frac{2^{13/3} m^2}{9 n^{7/3}}
			\textcolor{blue}{- \frac{2^{19/3} m^3}{9n^{7/3}} - \frac{ 5 m^4 2^{10/3} }{9n^{10/3}} - \frac{89 m^5 2^{10/3}}{135 n^{13/3}}} 
			+ \ldots
				    \right).
	\end{align*}
	In this expansion we choose $\sigma_4=-1$, which leads to a positive term $\Ai(\alpha)n^{-7/6}$, and we also choose $\tau_1=1/4$ (instead of $1/2$ in the relaxed trees case), which kills the leading coefficient of $\Ai'(\alpha)2^{4/3}(4\tau_1-1)n^{-4/3}$ for small $m = \Landauo(n^{1/3})$.
	Then, the behavior and thus the pictures are identical to the case of relaxed trees shown in Figures~\ref{fig:PosP1}~and~\ref{fig:PosP2}.
	Hence, the proof follows exactly the same lines as that Lemma~\ref{lem:AiryXLower}.
\end{proof}
As in the relaxed case, we define a sequence $Y_{n,m}:=\max\{\tilde{Y}_{n,m},0\}$, i.e., %
%or equivalently, due to the negative factor for large $m$, 
\[
 Y_{n,m}:=
  \begin{cases} 
      \hfill \tilde{Y}_{n,m},    \hfill & \text{ if $m<\frac{\sqrt{384n+9}+3}{16}$,} \\
      \hfill 0,\hfill & \text{ if $m\geq \frac{\sqrt{384n+9}+3}{16}$.} \\
  \end{cases}
\]
Then defining $\tilde{h}_n = \tilde{s}_n \tilde{h}_{n-1}$, we get by induction 
$$\cs_{n,m} \geq \kappa_0 \tilde{h}_n Y_{n,m},$$
for some $\kappa_{0}>0$.  In particular, it follows that the number $c_{n}=n!\cs_{2n,0}$ of compacted trees of size $n$ is bounded below by
\begin{align}
	\label{eq:cnlower}
	c_{n} \geq \gamma\, n! 4^n e^{3a_1n^{1/3}} n^{3/4},
\end{align}
for some constant $\gamma>0$. In the next section we will show an upper bound with the same asymptotic form, but with a different constant $\gamma$.

\subsection{Upper bound}

The following result is analogous to Lemma~\ref{lem:AiryXUpper}.

\begin{lemma}
	\label{lem:AiryYUpper}
	Choose $\eta > 2/9$ fixed and for all $n,m \geq 0$ let
	\begin{align*}
		\hat{Y}_{n,m} &:= \left(1-\frac{2m^2}{3n} + \frac{m}{4n} + \eta\frac{m^4}{n^2}\right)\Ai\left(a_{1}+\frac{2^{1/3}(m+1)}{n^{1/3}}\right)~~~~~~~~\text{and}\\
		\hat{s}_n &:= 2+\frac{2^{2/3}a_1}{n^{2/3}}+\frac{13}{6n} + \frac{1}{n^{7/6}}.
	\end{align*}
	Then, for any $\varepsilon>0$, there exists a constant $\hat{n}_0$ such that 
	\begin{align*}
		\begin{aligned}
		\hat{Y}_{n,m}\hat{s}_{n}\hat{s}_{n-1}\hat{s}_{n-2} & \geq \frac{n-m+2}{n+m} \hat{Y}_{n-1,m-1}\hat{s}_{n-1}\hat{s}_{n-2} + \frac{n-m-2}{n-m} \hat{Y}_{n-1,m+1}\hat{s}_{n-1}\hat{s}_{n-2} \\
		& \quad + \frac{2}{n-m} \hat{Y}_{n-2,m+2}\hat{s}_{n-2}  \\
		& \quad + \frac{2}{n+m}\hat{Y}_{n-3,m+1} + \frac{4}{(n+m)(n+m-2)}\hat{Y}_{n-3,m-1},
		\end{aligned}
	\end{align*}
	for all $n\geq \hat{n}_0$ and all $0 \leq m < n^{1-\varepsilon}$.
\end{lemma}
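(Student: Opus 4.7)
The plan is to follow exactly the template of Lemma~\ref{lem:AiryXUpper}, combining its four-region analysis with the parameter adjustments introduced in Lemma~\ref{lem:AiryYLower} to accommodate the upper-bound recurrence $U_\cs$ from Lemma~\ref{lem:recurrence_bounds}. I would first define
\[
  Q_{n,m} := \hat{Y}_{n,m}\hat{s}_{n}\hat{s}_{n-1}\hat{s}_{n-2} - \text{RHS}_{n,m},
\]
where $\text{RHS}_{n,m}$ is the right-hand side of the inequality to be proved, so that the claim becomes $Q_{n,m}\geq 0$ for $n\geq \hat{n}_0$ and $0\leq m< n^{1-\varepsilon}$.

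Second, I would expand each Airy factor as a Puiseux series in a neighborhood of $\alpha=a_{1}+2^{1/3}m/n^{1/3}$, writing $Q_{n,m}=q_{n,m}\Ai(\alpha)+q'_{n,m}\Ai'(\alpha)$ with $q_{n,m},q'_{n,m}$ admitting series in $n^{-1/6}$ whose coefficients are polynomial in $m$. The additional correction $\eta m^{4}/n^{2}$ in $\hat{Y}_{n,m}$ shifts the Newton polygon by exactly the same amount as in the relaxed case, so the qualitative picture should coincide with Figures~\ref{fig:PosP3} and~\ref{fig:PosP4}. The choices $\sigma_{0}=2$, $\sigma_{1}=0$, $\sigma_{2}=2^{2/3}a_{1}$, $\sigma_{3}=13/6$, $\tau_{1}=1/4$, $\tau_{2}=-2/3$ made in Lemma~\ref{lem:AiryYLower} eliminate the same families of high-order coefficients here, while choosing $\sigma_{4}=+1$ (opposite sign to Lemma~\ref{lem:AiryYLower}) flips the dominant $n^{-7/6}\Ai(\alpha)$ term to the sign needed for the upper bound.

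Third, I would verify positivity of the surviving dominant terms in the four regimes used in Lemma~\ref{lem:AiryXUpper}, with $x_{0}$ the unique positive root of $\Psi$ from Lemma~\ref{lem:PsiPhi}:
\begin{enumerate}
  \item For $m\leq x_{0}(n/2)^{1/3}$, both $\Ai(\alpha)$ and $\Ai'(\alpha)$ are positive, and the $n^{-7/6}\Ai(\alpha)$ term dominates, as controlled by Lemma~\ref{lem:PsiPhi};
  \item For $x_{0}(n/2)^{1/3}<m\leq n^{7/18}$, $\Ai'(\alpha)$ becomes negative and the coefficient $\tfrac{2^{10/3}m^{3}(2-9\eta)}{9n^{7/3}}\Ai'(\alpha)$ (positive thanks to $\eta>2/9$) dominates;
  \item For $n^{7/18}<m\leq n^{1/2}$, an additional term of order $m^{5}/n^{10/3}$ on the convex hull joins the previous one and preserves positivity;
  \item For $n^{1/2}<m\leq n^{1-\varepsilon}$, one invokes the asymptotic $\Ai'(\alpha)\sim -2^{1/6}(m/n^{1/3})^{1/2}\Ai(\alpha)$ coming from~\eqref{eq:Psilarge}, so that the blue $-\tfrac{2^{10/3}m^{5}\eta}{3n^{10/3}}\Ai'(\alpha)$ term dominates the red $-\tfrac{17m^{6}\eta}{3n^{4}}\Ai(\alpha)$ contribution.
\end{enumerate}

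The main obstacle is bookkeeping: the upper bound $U_\cs$ involves five previous terms with shifted Airy arguments $\alpha_{n-1,m\pm1}$, $\alpha_{n-2,m+2}$, $\alpha_{n-3,m\pm1}$, and one must verify carefully that every cross-term introduced by this more complex recurrence lies weakly inside the convex hull depicted in Figure~\ref{fig:PosP3} and does not produce a new dominant negative contribution on any of the four Newton-polygon edges. As in Lemma~\ref{lem:AiryYLower}, the expansion itself is mechanical and best delegated to a computer algebra system (cf.~\cite{Wallner2019web}); once the polygon is shown to match Figure~\ref{fig:PosP3}, the region-by-region sign analysis above yields $Q_{n,m}\geq 0$ and hence the claim.
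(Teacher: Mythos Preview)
Your proposal is correct and follows essentially the same approach as the paper: define $Q_{n,m}$, expand around $\alpha=a_{1}+2^{1/3}m/n^{1/3}$, use the parameter choices $\sigma_{4}=1$, $\tau_{1}=1/4$ (with $\sigma_{3}=13/6$, $\tau_{2}=-2/3$), observe that the Newton polygon coincides with Figures~\ref{fig:PosP3}--\ref{fig:PosP4}, and then carry out the same four-region sign analysis as in Lemma~\ref{lem:AiryXUpper}. The only caveat is that the explicit constants you quote in regimes~(2)--(4) are those of the relaxed case; in the compacted expansion they all pick up an extra factor of~$4$ (e.g.\ $\tfrac{2^{16/3}m^{3}(2-9\eta)}{9n^{7/3}}$ rather than $\tfrac{2^{10/3}m^{3}(2-9\eta)}{9n^{7/3}}$), but this does not affect the signs or the argument.
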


\begin{proof}
	The proof is again analogous to the case of relaxed trees.		
	In this case, the expansions for $n \to \infty$ start as follows, where the elements on the convex hull are written in color:
	\begin{align*}
		Q_{n,m} =&~\Ai(\alpha) \left(
			\textcolor{red}{\frac{4 \sigma_4}{n^{7/6}}} 
			+ \frac{2^{11/3} a_1 m}{3 n^{5/3}} 
			\textcolor{red}{+ \frac{4m^2 (41-108\eta)}{9n^2}} 
			+ \frac{2^{14/3} a_1 m^3(1-6\eta)}{3 n^{8/3}} 
			\textcolor{red}{+ \frac{8 m^4 (17-132\eta)}{9n^3}} \right. \\
			& \qquad \qquad - \left. \frac{2^{11/3} a_1 m^5 \eta}{n^{11/3}}  \textcolor{red}{- \frac{68 m^6 \eta}{3n^4}} \textcolor{red}{- \frac{124 m^7 \eta}{45 n^5}} +  \ldots
					\right) + \\
			    &~\Ai'(\alpha) \left(
			\textcolor{blue}{\frac{2^{7/3}}{n^{3/2}}} 
			+ \frac{32 a_1 m}{9n^2} + \frac{2^{13/3} m^2(7-27\eta)}{9 n^{7/3}} \textcolor{blue}{+ \frac{2^{16/3} m^3(2-9\eta)}{9n^{7/3}}} + \frac{2^{4/3} m^4  (20-279\eta)}{9n^{10/3}}  \right. \\
			& \qquad \qquad \left. \textcolor{blue}{- \frac{2^{16/3} m^5 \eta}{3n^{10/3}} - \frac{5 m^6 2^{7/3} \eta}{3n^{13/3}} - \frac{89 m^7 2^{7/3} \eta}{45 n^{16/3}}} + \ldots
				    \right).
	\end{align*}
	In this expansion we choose $\sigma_4=1$, which leads to a positive term $\Ai(\alpha)n^{-7/6}$, and again $\tau_1 = 1/4$.
	Then, the behavior and therefore the pictures are identical to the case of relaxed trees shown in the Figures~\ref{fig:PosP3} and \ref{fig:PosP4}; see the proof of Lemma~\ref{lem:AiryXUpper} for more details.
\end{proof}

As in the relaxed tree case, the inequality of Lemma~\ref{lem:AiryYUpper} is only proven for $m<n^{1-\varepsilon}$, so we need to do more work to handle the $m\geq n^{1-\varepsilon}$ case and deduce the desired upper bound. In order to use the lemma, we define a new sequence $\csr_{n,m}$ by the recurrence relation
\begin{align*}
%	\label{eq:compactedrecsimpb}
	\left\{
		\begin{array}{rlrl}
		\csr_{n,m}&= \frac{n-m+2}{n+m}\csr_{n-1,m-1} + \frac{n-m-2}{n-m} \csr_{n-1,m+1}&\\
		 &\quad + \frac{2}{n-m} \csr_{n-2,m+2} + \frac{2}{n+m}\csr_{n-3,m+1}&\\
		 &\quad+ \frac{4}{(n+m)(n+m-2)}\csr_{n-3,m-1}, & \text{ for } n\geq 3, n>m\geq0,\\
		\csr_{n,m} &= \cs_{n,m}, & \text{ otherwise.}
		\end{array}
	\right.
\end{align*}
Then it follows from Lemma~\ref{lem:recurrence_bounds} that $\cs_{n,m}\leq\csr_{n,m}\leq\rs_{n,m}$ for all $n,m$, as $\csr_{n,m}$ share the same recurrence as $U_d$ in Lemma~\ref{lem:recurrence_bounds}.
Now consider some large $N>0$, to be determined later, and define a second sequence $\csrr_{n,m}$ by the same rules as $\csr_{n,m}$ except that $\csrr_{n,m}=0$ whenever $m>n^{3/4}$ and $n>N$. Then, using Lemma \ref{lem:AiryYUpper} and defining $\hat{h}_{n}=\hat{s}_{n}\hat{h}_{n-1}$, we can show by induction that there is some constant $\kappa_{1}$ such that
\[\csrr_{n,m}\leq\kappa_{1}\hat{h}_{n}\hat{Y}_{n,m}.\]
It follows that there is some constant $\gamma'>0$ such that
\[\csrr_{2n,0}\leq \gamma'4^{n}e^{3a_1n^{1/3}}n^{3/4}.\]
Hence, it suffices to prove that there is some choice of $N$ and some constant $\varepsilon>0$ such that $\csr_{2n,0}\leq (1+\epsilon)\csrr_{2n,0}$ for all $n$.
Therefore, we first define a class $\Cc$ of weighted paths with the step set $\{(1,1), (1,-1), (2,-2), (3,-1), (3,1)\}$ and weights corresponding to the recurrence defining $\csr_{n,m}$. Then $\csr_{n,m}$ is the weighted number of paths $p\in\Cc$ from $(0,0)$ to $(n,m)$. We start with the following lemma, which is analogous to Lemma~\ref{lem:relaxed_end}.

\begin{lemma}\label{lem:compact_end} Let $q_{\ell,m,2n}$ denote the weighted number of paths $p\in\Cc$ from $(\ell,m)$ to $(2n,0)$. Then the numbers $q_{\ell,m,2n}$ satisfy the inequality
\[\frac{q_{\ell,j,2n}}{j+1}\geq \frac{q_{\ell,k,2n}}{k+1},\]
for integers $0\leq j<k\leq \ell\leq 2n$ satisfying $2|k-j$ and $n\geq10$.
\end{lemma}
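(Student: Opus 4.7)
The plan is to mirror the reverse induction of Lemma~\ref{lem:relaxed_end}, taking into account that the step set of $\Cc$ is larger: each point $(\ell,m)$ has five possible outgoing steps and the $\ell$-coordinate can jump by up to $3$. Reading off the first step of a path from $(\ell,m)$ to $(2n,0)$ yields the recurrence
\begin{align*}
q_{\ell,m,2n} &= \tfrac{\ell-m+2}{\ell+m+2}\,q_{\ell+1,m+1,2n} + \tfrac{\ell-m}{\ell-m+2}\,q_{\ell+1,m-1,2n} + \tfrac{2}{\ell-m+4}\,q_{\ell+2,m-2,2n} \\
&\quad + \tfrac{2}{\ell+m+2}\,q_{\ell+3,m-1,2n} + \tfrac{4}{(\ell+m+4)(\ell+m+2)}\,q_{\ell+3,m+1,2n},
\end{align*}
together with $q_{2n,m,2n}=\delta_{m,0}$ and $q_{\ell,m,2n}=0$ for $m<0$. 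Because the recurrence reaches back three units in $\ell$, the base cases are $\ell\in\{2n,2n-1,2n-2\}$; for each of these the set of continuations to $(2n,0)$ has bounded size, so the required monotonicity can be checked by direct enumeration.

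For the inductive step I fix $\ell$, assume the statement at levels $\ell+1,\ell+2,\ell+3$, and reduce (as in the relaxed proof) to showing $L_m:=q_{\ell,m-1,2n}/m - q_{\ell,m+1,2n}/(m+2)\ge 0$ for every $m\ge 1$. Substituting the recurrence into both fractions of $L_m$ splits it into three groups indexed by $\ell+1$, $\ell+2$, $\ell+3$. Applying the inductive hypothesis within each group collapses it to a coefficient times a single representative term, giving
\[
L_m \;\ge\; C_1\, q_{\ell+1,m,2n} \;+\; C_2\, q_{\ell+2,m-1,2n} \;+\; C_3\, q_{\ell+3,m,2n},
\]
where $C_1,C_2,C_3$ are explicit rational functions of $\ell$ and $m$.

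The crucial difference from the relaxed case is that $C_2$ is \emph{not} always non-negative: a quick numerical check at e.g.\ $\ell=10$, $m=5$ confirms this. To repair the defect I plan to use the one-step bound
\[
q_{\ell+2,m-1,2n} \;\le\; \tfrac{\ell-m+3}{\ell-m+1}\, q_{\ell+1,m,2n},
\]
which follows because $(1,-1)$ is an admissible first step from $(\ell+1,m)$ with weight $(\ell-m+1)/(\ell-m+3)$. Applying this transfer (and if needed an analogous bound $q_{\ell+3,m,2n}\le w\, q_{\ell+1,m,2n}$ derived from an initial $(2,-2)$ step followed by the inductive hypothesis) absorbs the negative contributions into the first group, yielding $L_m \ge C^\star\, q_{\ell+1,m,2n}$ for a single combined rational coefficient $C^\star$. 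The main obstacle is the final verification that $C^\star\ge 0$: its denominator is a manifestly positive product of linear factors in $\ell\pm m$, but the numerator is a polynomial in $\ell$ and $m$ whose non-negativity on the region $0\le m<\ell\le 2n$ is delicate. The hypothesis $n\ge 10$ enters exactly here, ruling out a short finite list of small parameter cases in which $C^\star$ would otherwise be negative; I expect the symbolic verification to be carried out by computer algebra along the lines of \cite{Wallner2019web}.
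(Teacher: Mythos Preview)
Your proposal is correct and follows essentially the same route as the paper's proof: reverse induction on $\ell$, reduction to $L_m\ge 0$, expansion via the five-term recurrence, collapse of each $\ell+i$ group to a single representative using the inductive hypothesis, and then absorption of the possibly negative coefficients via one-step lower bounds coming directly from the recurrence.

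A few minor discrepancies worth noting. First, the paper collapses the $\ell+3$ group to $q_{\ell+3,m-2,2n}$ rather than to $q_{\ell+3,m,2n}$, and accordingly uses the two chained one-step bounds $q_{\ell+1,m,2n}\ge\frac{\ell-m+1}{\ell-m+3}q_{\ell+2,m-1,2n}$ and $q_{\ell+2,m-1,2n}\ge\frac{\ell-m+3}{\ell-m+5}q_{\ell+3,m-2,2n}$, not a direct $(2,-2)$-step transfer from $\ell+1$ to $\ell+3$ as you propose. Second, the paper treats the case $m=1$ separately, because then $q_{\ell,m-1,2n}=q_{\ell,0,2n}$ loses the terms with negative second index and the reduction produces different rational functions $\tilde R_i$; you should flag this boundary case explicitly rather than fold it into the generic argument. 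With these two adjustments your sketch matches the paper's proof, including the deferral of the final positivity check to computer algebra.
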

\begin{proof}
The proof is along the same lines as the proof of Lemma \ref{lem:relaxed_end}. As in that case, it suffices to prove that
\begin{equation}
	\label{eq:compactdiff}
	\frac{q_{\ell,m-1,2n}}{m}-\frac{q_{\ell,m+1,2n}}{m+2}\geq0,
\end{equation}
for all $m\geq1$. We proceed by reverse induction on $\ell$, with base case $\ell=2n$. For the inductive step, note that $q$ satisfies the following recurrence for $\ell<2n$:
\begin{align*}
	\left\{
		\begin{array}{rlrl}
		q_{\ell,m,2n}&=0, & \text{ for } &m<0,\\
		q_{\ell,m,2n}&=\frac{\ell-m}{\ell-m+2}q_{\ell+1,m-1,2n}+\frac{\ell-m+2}{\ell+m+2}q_{\ell+1,m+1,2n}&&\\
		&~~~+\frac{2}{\ell-m+4}q_{\ell+2,m-2,2n}+\frac{2}{\ell+m+2}q_{\ell+3,m-1,2n}&&\\
		&~~~+\frac{4}{(\ell+m+4)(\ell+m+2)}q_{\ell+3,m+1,2n}, & \text{ for } & m\geq0.
		\end{array}
	\right.
\end{align*}
Now in order to prove \eqref{eq:compactdiff}, we expand the left-hand side $L(\ell,m,n)$ using the recurrence relation above. For $m\geq 2$, we use the inductive assumption, which says that \eqref{eq:compactdiff} holds for all larger values of $\ell$ and all $m$, to show that
\[L(\ell,m,n)\geq R_{1}(\ell,m)q_{\ell+1,m,2n}+R_{2}(\ell,m)q_{\ell+2,m-1,2n}+R_{3}(\ell,m)q_{\ell+3,m-2,2n},\]
for some explicit rational functions $R_{1}$, $R_{2}$ and $R_{3}$. Due to the nature of the functions $R_{1}$, $R_{2}$ and $R_{3}$, we can prove that the right-hand side above is positive using the inequalities
\[q_{\ell+1,m,2n}\geq \frac{\ell-m+1}{\ell-m+3}q_{\ell+2,m-1,2n}~~~~~\text{and}~~~~~q_{\ell+2,m-1,2n}\geq \frac{\ell-m+3}{\ell-m+5}q_{\ell+3,m-2,2n},\]
which follow directly from the recurrence relation. The case $m=1$ is similar, though we instead find
\[L(\ell,1,n)\geq \tilde{R}_{1}(\ell)q_{\ell+1,1,2n}+\tilde{R}_{2}(\ell)q_{\ell+2,0,2n}+\tilde{R}_{3}(\ell)q_{\ell+3,1,2n},\]
and we prove that the right hand side is positive using the inequalities
\[q_{\ell+1,1,2n}\geq \frac{\ell}{\ell+2}q_{\ell+2,0,2n}~~~~~\text{and}~~~~~q_{\ell+2,0,2n}\geq q_{\ell+3,1,2n},\]
which follow directly from the recurrence relation. We refer the accompanying worksheet~\cite{Wallner2019web} for more details. 
\end{proof}

Now, among the $\csr_{2n,0}$ weighted paths starting at $(0,0)$ and ending at $(2n,0)$, the proportion of those passing through some point $(2x,2y)$ is
\[\frac{\csr_{2x,2y}q_{2x,2y,2n}}{\csr_{2n,0}}\leq \frac{\csr_{2x,2y}q_{2x,2y,2n}}{\csr_{2x,0}q_{2x,0,2n}}\leq(2y+1)\frac{\csr_{2x,2y}}{\csr_{2x,0}}\leq(2y+1)\frac{\rs_{2x,2y}}{\cs_{2x,0}}\leq\frac{2y+1}{\gamma 4^{x}e^{3a_1x^{1/3}}x^{3/4}}{\binom{2x}{x+y}}.\]
We used the fact that $\hat{e}_{2x,2y} \leq d_{2x,2y}$ which we proved inductively using Lemma~\ref{lem:recurrence_bounds}, and we also used the lower bound~\eqref{eq:cnlower} for $e_{2x,0}$.  
We can finish in exactly the same way as in Lemma~\ref{lem:choose_const} for relaxed trees, thereby showing that there is some choice for $N$ such that $\csr_{2n,0}\leq 2\csrr_{2n,0}$ for all $n$. 

Recall that $\cs_{2n,0}\leq\csr_{2n,0}$ and there is some constant $\kappa_{1}$ such that
$\csrr_{n,m}\leq\kappa_{1}\hat{h}_{n}\hat{Y}_{n,m}.$ This implies that
\[c_{n}=n!c_{2n,0}\leq2\kappa_{1}n!\hat{h}_{2n}\hat{Y}_{2n,0}.\]
The right-hand side behaves asymptotically like $\Theta(n!4^{n}e^{3a_1n^{1/3}}n^{3/4})$, hence there is some constant $\gamma''$ such that
\[c_{n}\leq \gamma'' n!4^{n}e^{3a_1n^{1/3}}n^{3/4},\]
for all $n$. This completes the upper bound. Indeed, since we have now proven both the upper and lower bounds, which differ only in the constant term, they imply that
\[c_{n}= \Theta\left(n!4^{n}e^{3a_1n^{1/3}}n^{3/4}\right).\]

%%%%%%%%%%%%%%%%%%%%%%%%%%%%%%%%%%%%%%%%%%%%%%%%%%%%%%%%%%%%

\section{Minimal finite automata}
\label{sec:automata}
In this section we use the results on compacted and relaxed trees to give bounds on the enumeration of a certain class of deterministic finite automata considered in \cite{DomaratzkiKismaShallit2002DFA,liskovets2006exact, domaratzki2006enumeration}. We start with some basic definitions of automata.
 
A \emph{deterministic finite automaton} (DFA) $A$ is a $5$-tuple $(\Sigma, Q, \delta,q_0,F)$, where $\Sigma$ is a finite set of letters called the \emph{alphabet}, $Q$ is a finite set of states, $\delta : Q \times \Sigma \to Q$ is the \emph{transition function}, $q_0$ is the \emph{initial state}, and $F \subseteq Q$ is the set of \emph{final states} (sometimes called \emph{accept states}). A DFA can be represented by a directed graph with one vertex $v_{s}$ for each state $s\in Q$, with the vertices corresponding to final states being highlighted, and for every transition $\delta(s,w)=\hat{s}$, there is an edge from $s$ to $\hat{s}$ labeled $w$ (see Figure \ref{fig:DFA}).

\begin{figure}[ht]
	\centering
	\includegraphics[scale=0.9]{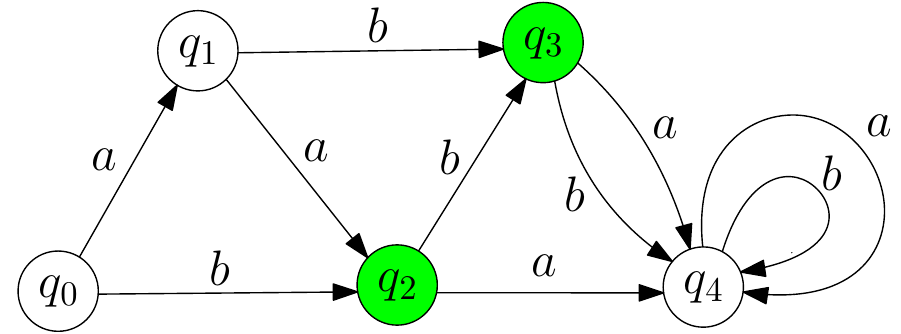}%
	\caption{The unique minimal DFA with $5$ states for the language $\{aa,aab,ab,b,bb\}$. Here, $q_{0}$ is the initial state, $q_{2}$ and $q_{3}$ are the final states, and $q_{4}$ is the unique dead state.}
	\label{fig:DFA}
\end{figure}	

 A word $w = w_1 w_2 \cdots w_n \in \Sigma^*$ is \emph{accepted} by $A$ if the sequence of states $(s_0,s_1,\ldots,s_n) \in Q^{n+1}$ defined by $s_0=q_0$ and $s_{i+1} = \delta(s_i,w_i)$ for $i=0,\ldots,n-1$ ends with $s_n \in F$ a final state.
The set of words accepted by $A$ is called the \emph{language} $\Lc(A)$ recognized by $A$.
It is well-known that DFAs recognize exactly the set of regular languages.
Note that every DFA recognizes a unique language, but a language can be recognized by several different DFAs. 
A DFA is called \emph{minimal} if no DFA with fewer states recognizes the same language.
The Myhill-Nerode Theorem states that every regular language is recognized by a unique minimal DFA (up to isomorphism)~\cite[Theorem~3.10]{HopcroftUllman1979Automata}.
For more details on automata see~\cite{HopcroftUllman1979Automata}.

Since every regular language defines a unique minimal automaton, one can define the complexity of the language to be the number of states in the corresponding automaton. More precisely, this is an interpretation for the space complexity of the language. 

The asymptotic proportion of minimal DFAs in the class of (not necessarily acyclic) automata was solved by Bassino, Nicaud, and Sportiello in~\cite{BassinoEtal2012Automata}, building on enumeration results by Korshunov~\cite{Korshunov1986Automata,Korshunov1978Automata} and Bassino and Nicaud~\cite{BassinoNicaud2007Automata}.
This result also exploits an underlying tree structure of the automata, but this tree structure comes from a different traversal than what we use.
In that case, no stretched exponential appears in the asymptotic enumeration, and
% in contrast to~\eqref{eq:fraccnrn}, 
the minimal automata account for a constant fraction of all automata.

The analogous problem in the restricted case of automata that recognize a finite language is widely open (see for example \cite{domaratzki2006enumeration}). This corresponds to counting finite languages by their space complexity. To show the relation between these automata and compacted and relaxed trees, we need the following lemma from \cite[Lemma~2.3]{liskovets2006exact} or \cite[Section~3.4]{HopcroftUllman1979Automata}. For the convenience of the reader, we include a proof of one direction here. 
\begin{lemma}\label{lem:DFAmin}A DFA $A$ is the minimal automaton for some finite language if and only if it has the following properties:
\begin{itemize}
\item There is a unique \emph{sink} $s$. That is, a state which is not a final state and with all transitions from $s$ end at $s$ that is, $\delta(s,w)=s$.
\item $A$ is \emph{acyclic}: the corresponding directed graph has no cycles except for the loops at the dead state.
\item $A$ is \emph{initially connected}: for any state $p$ there exists a word $w \in \Sigma^*$ such that $A$ reaches the state $p$ upon reading $w$.
\item $A$ is \emph{reduced}: for any two different states $q$, $q'$, the two automata with initial state $q$ and $q'$ recognize different languages. 
\end{itemize}
\end{lemma}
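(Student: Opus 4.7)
The plan is to prove the biconditional by establishing each direction separately. Throughout, I would work with the \emph{right language} $L_q$ of a state $q$, defined as the set of words $w \in \Sigma^*$ such that reading $w$ starting from $q$ ends in $F$, so that $\Lc(A) = L_{q_0}$ and two states $q, q'$ can be merged without affecting $\Lc(A)$ exactly when $L_q = L_{q'}$. This notion is the common language for minimality and for the Myhill--Nerode theorem, which will do most of the work.

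For the ``only if'' direction, I would assume $A$ is minimal for some finite language $L$ and derive the four properties in turn. Initial connectedness and reducedness are standard minimality arguments: any unreachable state could be discarded, and any pair of distinct states with identical right language could be merged, each contradicting the minimality of $A$. For the acyclicity-with-sink condition, the finiteness of $L$ is decisive. If some reachable state $q$ lay on a non-trivial cycle labeled by a word $\alpha$, then pumping $\alpha$ would give infinitely many distinct words taking $q_0$ to $q$; if additionally $L_q \neq \emptyset$, concatenating with any $v \in L_q$ would inject infinitely many words into $L$, contradicting finiteness. Thus cycles only involve states $q$ with $L_q = \emptyset$. Reducedness forces all such states to coincide, and the totality of $\delta$ together with the finiteness of $Q$ forces at least one to exist, yielding the unique sink whose outgoing transitions must all loop on itself.

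For the ``if'' direction, I would first verify that $\Lc(A)$ is finite: outside the sink the graph is acyclic, so any accepting computation has length at most $|Q| - 1$, which bounds the set of accepted words. Minimality then follows from the Myhill--Nerode theorem. Reducedness says that distinct states of $A$ represent distinct Myhill--Nerode classes of $\Lc(A)$, while initial connectedness ensures every state arises as $\delta(q_0, u)$ for some word $u$, so $A$ has exactly as many states as the canonical minimal DFA for $\Lc(A)$.

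The main obstacle I anticipate is the acyclicity portion of the ``only if'' direction, where existence of the sink, uniqueness of the sink, and exclusion of all non-trivial cycles must be teased apart simultaneously from minimality and finiteness of $L$, while respecting that $\delta$ must remain a total function (so self-loops at the sink are forced rather than optional). Once that is done cleanly, the remaining arguments are essentially mechanical applications of the Myhill--Nerode framework.
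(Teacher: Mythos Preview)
Your proposal is correct and, for the ``only if'' direction, parallels the paper's argument closely: the paper also derives reducedness and initial connectedness by the standard merge/remove arguments, obtains the unique sink as the single state with empty right language (using reducedness for uniqueness and finiteness of $L$ for existence), and reads off acyclicity from finiteness of $L$. The organizational difference is that the paper establishes reducedness first and then uses it to pin down the sink before invoking acyclicity, whereas you run the pumping argument to show that every state on a non-trivial cycle has empty right language and only then collapse those states via reducedness; both routes arrive at the same place.

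The one substantive difference is that the paper does \emph{not} prove the ``if'' direction at all---it explicitly defers that to \cite{liskovets2006exact} and \cite{HopcroftUllman1979Automata}. Your sketch of this direction via Myhill--Nerode (reducedness gives an injection from states to Nerode classes, initial connectedness makes it a bijection, and acyclicity bounds word length so $\Lc(A)$ is finite) is the standard argument and is fine; it simply goes beyond what the paper itself supplies.
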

\begin{proof}
We will show one direction of this proof: that a minimal automaton has the four properties. For a proof of the reverse direction see e.g.~\cite[Lemma~2.3]{liskovets2006exact} or \cite[Section~3.4]{HopcroftUllman1979Automata}.

If $A$ is minimal but not reduced then there are two states $q$ and $q'$ from which the same language is recognized. These two states can be merged into a single state without changing the language, contradicting the minimality of $A$. This implies that there is at most one state $q$ from which the empty language is recognized. Moreover, such a state must exist for the language to be finite. This state $q$ must therefore be the unique sink.

The fact the $A$ is acyclic follows immediately from the fact that $A$ recognizes a finite language. Finally, if we remove from $A$ all states $p$ that cannot be reached, the language accepted by the automaton will not be changed, so by the minimality of $A$, there must be no such states and $A$ must be initially connected.
\end{proof}

We note here one consequence of this lemma: since the automaton is acyclic, there must be some state $q$ other than the sink $s$ such that all transitions from $q$ end at $s$. Then since the automaton is reduced, there must be only one such state $q$, and it must be an accept state.

Now using our asymptotic results on compacted and relaxed trees, we give the following new bounds on the asymptotic number of such automata, determining their asymptotics up to a polynomial multiplicative term.
\begin{theo}
	\label{theo:m2nbounds}
	Let $m_{2,n}$ be the number of minimal DFAs over a binary alphabet with $n$ transient states (and a unique sink) that recognize a finite language. 
	Then, 
	\begin{align*}
		2^{n-1} c_n \leq m_{2,n} \leq 2^{n-1} r_n.
	\end{align*}
	As a consequence, there exist positive constants $\kappa_1$ and $\kappa_2$ such that
	\begin{align*}
		\kappa_1 n^{3/4} 
			\leq \frac{m_{2,n}}{n! 8^n e^{3a_1n^{1/3}}} 
			\leq \kappa_2 n.
	\end{align*}
\end{theo}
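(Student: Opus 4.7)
The plan is to interpret minimal DFAs as relaxed (resp.\ compacted) trees equipped with a choice of final states, and then combine this with Theorem~\ref{theo:mainasy}. By Lemma~\ref{lem:DFAmin}, the transition structure (forgetting final states) of a minimal DFA recognizing a finite language is acyclic, initially connected, and has a unique sink, which matches precisely the description of relaxed trees given by the OEIS entry A082161 cited after Definition~\ref{def:relaxedtree}: the sink is the leftmost leaf, the $n$ transient states are the internal nodes of the spine, and the transitions for letters $a,b$ correspond to the left and right children (internal nodes or pointers pointing to earlier nodes in postorder).

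For the upper bound $m_{2,n} \leq 2^{n-1} r_n$, I would observe that each minimal DFA is uniquely determined by its relaxed tree transition structure together with the set $F$ of final states. By the remark immediately following Lemma~\ref{lem:DFAmin}, minimality forces the unique state whose both transitions go to the sink to belong to $F$, while the remaining $n-1$ transient states can be included in $F$ or not independently (the sink itself is excluded). This yields at most $2^{n-1}$ admissible choices of $F$ per relaxed tree, and different pairs clearly give different DFAs.

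For the lower bound $m_{2,n} \geq 2^{n-1} c_n$, I would show that every compacted tree of size $n$, equipped with any of the $2^{n-1}$ final state sets containing its unique cherry state, yields a distinct minimal DFA. Acyclicity, initial connectedness, and uniqueness of the sink are automatic from the construction; the non-trivial point is reducedness. The plan is to prove by structural induction on the depth of $B(q)$ that the accepted language $L(q)$ determines $q$ uniquely among internal nodes. In the base case, a non-sink state with $L(q)=\emptyset$ would need both successors to have empty language, inductively forcing $q$ itself to be a sink; and a state with $L(q)=\{\varepsilon\}$ must have both transitions to the sink, so it is a cherry, which is unique by Proposition~\ref{prop:cherry}. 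In the inductive step, one recovers the two successors via $L_a = a^{-1} L(q)$ and $L_b = b^{-1} L(q)$, applies the induction hypothesis, and invokes Proposition~\ref{prop:cherry} once more to conclude that two internal nodes sharing the same ordered pair of children (and the same final-state status) must coincide.

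Combining the two inequalities with Theorem~\ref{theo:mainasy} immediately gives the asymptotic estimate, since $2^{n-1} c_n = \Theta\!\left(n!\, 8^n e^{3a_1 n^{1/3}} n^{3/4}\right)$ and $2^{n-1} r_n = \Theta\!\left(n!\, 8^n e^{3a_1 n^{1/3}} n\right)$. The main obstacle is the reducedness induction in the lower bound: one must carefully handle the $L(q)=\emptyset$ and $L(q)=\{\varepsilon\}$ base cases (this is exactly where the cherry is forced to be final, for otherwise it would be equivalent to the sink) and verify that Proposition~\ref{prop:cherry} is strong enough to exclude two distinct internal nodes sharing the same children even in the presence of pointer-leaves.
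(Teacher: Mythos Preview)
Your proposal is correct and follows essentially the same approach as the paper. The paper also identifies the transition structure of a minimal acyclic DFA with a relaxed tree for the upper bound, and builds $2^{n-1}$ minimal DFAs from each compacted tree (forcing the unique node with both pointers to the sink to be final) for the lower bound; the only cosmetic difference is that the paper proves reducedness via a minimal-counterexample argument on the length of the longest word in $\Lc(q)$, whereas you phrase the same descent as a structural induction on the height of $B(q)$ and split off the $L(q)=\emptyset$ and $L(q)=\{\varepsilon\}$ cases explicitly.
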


\begin{proof}
	By the lemma above, a compacted tree $\mathcal{C}$ can be transformed into an automaton $A$ that recognizes a finite language over the alphabet $\{a, b\}$ as follows:
	The states of the automaton $A$ correspond to the internal nodes and the leaf of $\mathcal{C}$. The initial state corresponds to the root, and at each state, the transition after reading $a$ (resp.~$b$) is given by the left (resp.~right) child or pointer in $\mathcal{C}$. The leaf is designated as the unique sink, and we can choose any subset of internal nodes as final states, with the condition that the unique node with two pointers to the sink is always a final state.
	
	To prove the minimality of such automata, we just need to check the four conditions of Lemma \ref{lem:DFAmin}. The fact that $A$ is acyclic and has a unique sink follow immediately from the fact the $\mathcal{C}$ is a DAG. Then $A$ is initially connected because $\mathcal{C}$ has a unique source. 
	Now we will show that $A$ is reduced. 
	For any state $q$ in $A$, let $\Lc(q)$ denote the language recognized by the automaton with initial state $q$. 
	Now suppose that $A$ is not reduced and let $q$ and $q'$ be different states of $A$ satisfying $\Lc(q)=\Lc(q')$. 
	We also assume that amongst all such pairs $(q,q')$, the length of the longest word in $\Lc(q)$ is minimized. 
	Since the unique node with both pointers to the sink is a final state, the language $\Lc(q)$ can only be empty if $q$ and $q'$ are both the final state, which is impossible. 
	Since $\Lc(q)=\Lc(q')$ we must have $\Lc(\delta(q,a))=\Lc(\delta(q',a))$ and $\Lc(\delta(q,b))=\Lc(\delta(q',b))$. 
	Then, the minimality condition on the language $\Lc(q)$ implies that $\delta(q,a)=\delta(q',a)$ and $\delta(q,b)=\delta(q',b)$. 
	But this means that the node $u$ and $v$ in $\mathcal{C}$ corresponding to $q$ and $q'$ have the same left child and the same right child, contradicting the fact that $\mathcal{C}$ is compacted. 
	This completes the proof that $A$ is reduced.
	
	Hence, each of the $2^{n-1}$ subsets of the remaining states (not the sink and not the node with two pointers to the sink) gives a valid minimal automaton of size $n$.
	
	Applying the same construction to relaxed trees gives an upper bound, as every minimal automaton, after forgetting which states are final, corresponds by construction to a relaxed tree. Note that this observation has already been made in~\cite[Equation~(1)]{liskovets2006exact}, yet the asymptotics was not known.  
\end{proof}

Using the methods of the present work, the authors showed in a companion paper~\cite{ElveypriceFangWallner2020DFA} that 
\begin{align*}
	m_{2,n} = \Theta\left( n! \, 8^n e^{3a_1n^{1/3}} n^{7/8} \right).
\end{align*}
To our knowledge, our results give the best known bounds on the asymptotic number of minimal DFAs on a binary alphabet recognizing a finite language. 
Note that they disprove the conjecture $m_{2,n} \sim K \, 2^{n-1}r_n$ for some $K>0$ of Liskovets based on numerical data; see~\cite[Equation~(16)]{liskovets2006exact}. 
Previously, Domaratzki derived in~\cite{Domaratzki2004Bounds} the lower bound 
\[m_{2,n} \geq \frac{(2n-1)!}{(n-1)!}c_1^{n-1},\] 
with $c_1 \approx 1.0669467$, which implies the asymptotic bound $m_{2,n} \geq  \frac{n!(4c_1)^n}{2c_1\sqrt{\pi n}}$ 
(note that $m_{2,n} = f_2'(n+1)$ in his results).
Furthermore, Domaratzki showed in~\cite{Domaratzki2004Genocchi} the upper bound 
\[m_{2,n} \leq 2^{n-1}G_{2n+2},\] 
where $G_{2n}$ are the Genocchi numbers defined by $\frac{2t}{e^t+1} = t + \sum_{n \geq 1}(-1)^n G_{2n} \frac{t^{2n}}{(2n)!}$. This gives the asymptotic bound $m_{2,n} \leq 4 (2n)! (\frac{2}{\pi^2})^{n+1} n^2.$ This bound, however, is much larger than the superexponential growth given by $n!$ in our upper bound. 
While not explicitly formulated in the literature, it is possible to bound the acyclic DFAs by general DFAs using the results by Korshunov~\cite{Korshunov1986Automata,Korshunov1978Automata} (see also~\cite[Theorem~18]{BassinoNicaud2007Automata}). 
Thereby, we get the upper bound 
\[m_{2,n} = \LandauO\left(n! (2e^2\nu)^n\right),\] 
where $\nu = \alpha^\alpha (1+\alpha)^{1-\alpha} \approx 0.8359$ with $\alpha$ being the solution of $1+x=xe^{2/(1+x)}$, and therefore $2e^2\nu \approx 12.3531$, which is significantly larger than the exponential growth in our upper bound.

%%%%%%%%%%%%%%%%%%%%%%%%%%%%%%%%%%%%%%%%%%%%%%%%%%%%%%%%%%%%

\section*{Acknowledgments}
We would like to thank Tony Guttmann for sending us calculations on the asymptotic form of pushed Dyck paths. More generally, we thank him, Cyril Banderier and Andrea Sportiello for interesting discussions on the presence of a stretched exponential. We want to thank the anonymous referees for their detailed comments which improved the presentation of this work.

\addcontentsline{toc}{section}{References}
\bibliographystyle{abbrv}
\bibliography{Bibliography}

\begin{thebibliography}{10}

\bibitem{AbramowitzStegun1964}
M.~Abramowitz and I.~A. Stegun.
\newblock {\em Handbook of {M}athematical {F}unctions {W}ith {F}ormulas,
  {G}raphs, and {M}athematical {T}ables}, volume~55 of {\em National Bureau of
  Standards Applied Mathematics Series}.
\newblock For sale by the Superintendent of Documents, U.S. Government Printing
  Office, Washington, D.C., 1964.

\bibitem{BassinoEtal2012Automata}
F.~Bassino, J.~David, and A.~Sportiello.
\newblock Asymptotic enumeration of minimal automata.
\newblock In {\em 29th {I}nternational {S}ymposium on {T}heoretical {A}spects
  of {C}omputer {S}cience}, volume~14 of {\em LIPIcs. Leibniz Int. Proc.
  Inform.}, pages 88--99. Schloss Dagstuhl. Leibniz-Zent. Inform., Wadern,
  2012.

\bibitem{BassinoNicaud2007Automata}
F.~Bassino and C.~Nicaud.
\newblock Enumeration and random generation of accessible automata.
\newblock {\em Theoret. Comput. Sci.}, 381(1-3):86--104, 2007.

\bibitem{BeatonEtal2015Compressed}
N.~R. Beaton, A.~J. Guttmann, I.~Jensen, and G.~F. Lawler.
\newblock Compressed self-avoiding walks, bridges and polygons.
\newblock {\em J. Phys. A}, 48(45):454001, oct 2015.

\bibitem{bousquet2015xml}
M.~Bousquet-M{\'e}lou, M.~Lohrey, S.~Maneth, and E.~Noeth.
\newblock {XML} compression via directed acyclic graphs.
\newblock {\em Theory Comput. Syst.}, 57(4):1322--1371, 2015.

\bibitem{Callan2008Determinant}
D.~Callan.
\newblock A determinant of {S}tirling cycle numbers counts unlabeled acyclic
  single-source automata.
\newblock {\em Discrete Math. Theor. Comput. Sci.}, 10(2):77--86, 2008.

\bibitem{ConwayEtal20151324}
A.~R. Conway and A.~J. Guttmann.
\newblock On 1324-avoiding permutations.
\newblock {\em Adv. in Appl. Math.}, 64:50--69, 2015.

\bibitem{ConwayEtal20181324}
A.~R. Conway, A.~J. Guttmann, and P.~Zinn-Justin.
\newblock 1324-avoiding permutations revisited.
\newblock {\em Adv. in Appl. Math.}, 96:312--333, 2018.

\bibitem{Domaratzki2004Genocchi}
M.~Domaratzki.
\newblock Combinatorial interpretations of a generalization of the {G}enocchi
  numbers.
\newblock {\em J. Integer Seq.}, 7(3):Article 04.3.6, 11, 2004.

\bibitem{Domaratzki2004Bounds}
M.~Domaratzki.
\newblock Improved bounds on the number of automata accepting finite languages.
\newblock {\em Internat. J. Found. Comput. Sci.}, 15(1):143--161, 2004.
\newblock Computing and Combinatorics Conference---COCOON'02.

\bibitem{domaratzki2006enumeration}
M.~Domaratzki.
\newblock Enumeration of formal languages.
\newblock {\em Bull. Eur. Assoc. Theor. Comput. Sci. EATCS}, 89:117--133, 2006.

\bibitem{DomaratzkiKismaShallit2002DFA}
M.~Domaratzki, D.~Kisman, and J.~Shallit.
\newblock On the number of distinct languages accepted by finite automata with
  {$n$} states.
\newblock {\em J. Autom. Lang. Comb.}, 7(4):469--486, 2002.

\bibitem{ElveypriceFangWallner2020DFA}
A.~Elvey~Price, W.~Fang, and M.~Wallner.
\newblock {Asymptotics of Minimal Deterministic Finite Automata Recognizing a
  Finite Binary Language}.
\newblock In {\em AofA 2020}, volume 159 of {\em LIPIcs}, pages 11:1--11:13.
  Schloss Dagstuhl--Leibniz-Zentrum f{\"u}r Informatik, 2020.

\bibitem{EP2017numerical}
A.~Elvey~Price and A.~J. Guttmann.
\newblock Numerical studies of {T}hompson's group {F} and related groups.
\newblock {\em Internat. J. Algebra Comput.}, 29(2):179--243, 2019.

\bibitem{flaj09}
P.~Flajolet and R.~Sedgewick.
\newblock {\em Analytic combinatorics}.
\newblock Cambridge University Press, Cambridge, 2009.

\bibitem{flss90}
P.~Flajolet, P.~Sipala, and J.-M. Steyaert.
\newblock Analytic variations on the common subexpression problem.
\newblock In {\em Automata, languages and programming ({C}oventry, 1990)},
  volume 443 of {\em Lecture Notes in Comput. Sci.}, pages 220--234. Springer,
  New York, 1990.

\bibitem{GenitriniGittenbergerKauersWallner2016}
A.~Genitrini, B.~Gittenberger, M.~Kauers, and M.~Wallner.
\newblock Asymptotic enumeration of compacted binary trees of bounded right
  height.
\newblock {\em J. Combin. Theory Ser. A}, 172:105177, 2020.
\newblock \href{http://arxiv.org/abs/1703.10031}{arXiv:1703.10031}.

\bibitem{guttmann2015analysis}
A.~J. Guttmann.
\newblock Analysis of series expansions for non-algebraic singularities.
\newblock {\em J. Phys. A}, 48(4):045209, 2015.

\bibitem{HopcroftUllman1979Automata}
J.~E. Hopcroft and J.~D. Ullman.
\newblock {\em Introduction to automata theory, languages, and computation}.
\newblock Addison-Wesley Publishing Co., Reading, Mass., 1979.
\newblock Addison-Wesley Series in Computer Science.

\bibitem{Korshunov1978Automata}
A.~D. Korshunov.
\newblock Enumeration of finite automata.
\newblock {\em Problemy Kibernet.}, 34:5--82, 1978.
\newblock (In Russian).

\bibitem{Korshunov1986Automata}
A.~D. Korshunov.
\newblock On the number of nonisomorphic strongly connected finite automata.
\newblock {\em Elektron. Informationsverarb. Kybernet.}, 22(9):459--462, 1986.

\bibitem{kousha2012asymptotic}
T.~Kousha.
\newblock Asymptotic behavior and the moderate deviation principle for the
  maximum of a {D}yck path.
\newblock {\em Statist. Probab. Lett.}, 82(2):340--347, 2012.

\bibitem{liskovets2006exact}
V.~A. Liskovets.
\newblock Exact enumeration of acyclic deterministic automata.
\newblock {\em Discrete Appl. Math.}, 154(3):537--551, 2006.

\bibitem{pittet2002random}
C.~Pittet and L.~Saloff-Coste.
\newblock On random walks on wreath products.
\newblock {\em Ann. Probab.}, 30(2):948--977, 2002.

\bibitem{revelle2003heat}
D.~Revelle.
\newblock Heat kernel asymptotics on the lamplighter group.
\newblock {\em Electron. Comm. Probab.}, 8:142--154, 2003.

\bibitem{Wallner2018R1}
M.~Wallner.
\newblock A bijection of plane increasing trees with relaxed binary trees of
  right height at most one.
\newblock {\em Theoret. Comput. Sci.}, 755:1--12, 2019.

\bibitem{Wallner2019web}
M.~Wallner.
\newblock \url{http://dmg.tuwien.ac.at/mwallner}, 2020.

\bibitem{Wright1932Coefficients}
E.~M. Wright.
\newblock The coefficients of a certain power series.
\newblock {\em J. London Math. Soc.}, 7(4):256--262, 1932.

\bibitem{Wright1933Coefficients}
E.~M. Wright.
\newblock On the coefficients of power series having exponential singularities.
\newblock {\em J. London Math. Soc.}, 8(1):71--79, 1933.

\bibitem{Wright1949Coefficients}
E.~M. Wright.
\newblock On the coefficients of power series having exponential singularities.
  {II}.
\newblock {\em J. London Math. Soc.}, 24:304--309, 1949.

\end{thebibliography}
\label{sec:biblio}

%%%%%%%%%%%%%%%%%%%%%%%%%%%%%%%%%%%%%%%%%%%%%
\Addresses

\end{document}